\begin{document}

\title{Adaptive Basis Construction and Improved Error Estimation for Parametric Nonlinear Dynamical Systems}

\author{Sridhar Chellappa}

\author{Lihong Feng}

\author{Peter Benner}

\authormark{Chellappa, Feng, Benner : Adaptive Basis Construction and Improved Error Estimation}

\address{\orgname{Max Planck Institute for Dynamics of Complex Technical Systems}, \orgaddress{1~Sandtorstra\ss e, 39106 Magdeburg, \country{Germany}}}

\corres{Sridhar Chellappa. \email{chellappa@mpi-magdeburg.mpg.de}}


\abstract[Summary]{An adaptive scheme to generate reduced-order models for parametric nonlinear dynamical systems is proposed. It aims to automatize the POD-Greedy algorithm combined with empirical interpolation. At each iteration, it is able to adaptively determine the number of the reduced basis vectors and the number of the interpolation basis vectors for basis construction. The proposed technique is able to derive a suitable match between the reduced basis and the interpolation basis vectors, making the generation of a stable, compact and reliable reduced-order model possible. This is achieved by adaptively adding new basis vectors or removing unnecessary ones, at each iteration of the greedy algorithm.  
An efficient output error indicator plays a key role in the adaptive scheme. We also propose an improved output error indicator based on previous work. Upon convergence of the POD-Greedy algorithm, the new error indicator is shown to be sharper than the existing ones, implicating that a more reliable reduced-order model can be constructed. The proposed method is tested on several nonlinear dynamical systems, namely,  the viscous Burgers' equation and two other models from chemical engineering.}

\keywords{Model Order Reduction, Error Estimation, Adaptivity}


\maketitle

\section{Introduction}\label{sec1}
Large-scale mathematical models have become common in detailed modelling of complex physical and chemical processes. By large-scale, we refer to a model with a large number of degrees of freedom. Very often, these models need to be evaluated repeatedly, for different sets of parameters. To avoid the huge computational burden, model order reduction (MOR) typically seeks a small-scale system, with substantially fewer (typically, at least by $1-2$ orders of magnitude) degrees of freedom, that faithfully approximates the original system with parameter variations. The original large-scale system is referred to as the full-order model (FOM) while the small-scale system is called the reduced-order model (ROM) in the following.
   
MOR for linear systems has been under investigation for several decades and is well-established~\cite{morBauBF14}. However, methods for nonlinear systems are still under active research. Existing MOR methods for linear systems could be extended to weakly-nonlinear systems or systems with structured non-linearities~\cite{morBenB12b,morBenGG18}. For general strong nonlinear systems, the snapshot based methods, e.g., Proper Orthogonal Decomposition (POD), Reduced Basis Method (RBM) are chosen most often. 
The use of POD/RBM is often accompanied by an interpolatory strategy for efficient evaluation of the nonlinear function of the ROM. 
The idea is instead of evaluating the vector of nonlinearities with the full dimension,  
only several elements in the vector are evaluated. The interpolation indices decide which elements should be evaluated.
Several interpolation methods have been proposed, e.g., the Empirical Interpolation Method (EIM)~\cite{morBarMNetal04, morGreMetal07}, the Discrete Empirical Interpolation Method (DEIM)~\cite{morChaS10}. In addition, there exist gappy-POD~\cite{morAmsZCF15, morAmsZF12, morCaretal11,morCarBF11,Eve95} and Missing Point Estimation~\cite{morAstetal08}.
POD with DEIM, i.e. POD-DEIM, was proposed for MOR of non-parametric nonlinear systems~\cite{morGreMetal07, morChaS10}, while RBM combined with EIM (RBM-EIM for short)~\cite{morBarMNetal04, morGreMetal07} is often applied to parametric nonlinear systems.  
 
A standard implementation of POD-DEIM is to separately generate the reduced bases (RB) for the state vector and the interpolation bases for the nonlinear vector using POD and DEIM, respectively. This technique nevertheless does not guarantee that the ROM and the interpolation bases are as small as possible. In the worst case, the ROM might be unstable. To avoid these issues, the authors in~\cite{morFenMB17} introduced a method of adaptively constructing the RB and the DEIM bases. The adaptivity is guided by an \textit{a posteriori} output error indicator for the ROM. Finally, a compact and reliable ROM is obtained. The algorithm in ~\cite{morFenMB17} has several drawbacks. Firstly, it is only applicable to non-parametric systems. Secondly, the adaptive scheme is only {\it one-way}. This means, the interpolation bases can only be extended but cannot be shrunk when necessary. Therefore, the number of the initial basis vectors must be small enough, which could cause more iterations until convergence. Thirdly, the error indicator used in \cite{morFenMB17} is not ideally sharp and could be further improved. 

For parametric systems, direct implementation of RBM-EIM by separately implementing EIM could give rise to similar issues as for non-parametric systems caused by POD-DEIM. In addition, the standard EIM needs FOM simulations at all samples in a training set, which is time consuming. This issue is also pointed out in~\cite{morBenEEetal18, morDavP15}. In~\cite{morDroHO12}, RBM-EIM is implemented such that both the reduced basis and the interpolation basis are updated simultaneously. The update was done by trivially adding a single new member to each of the bases at each iteration step of the POD-Greedy algorithm~\cite{morHaaO08} designed for parametric nonlinear systems. Moreover,  in \cite{morDroHO12}, initial RB and EIM basis vectors must be computed by simulating the FOM several times over a coarse training set. In contrast, the adaptive approach proposed in this paper builds the ROM by starting with a single FOM simulation, which is more effective in the sense of adaptivity.

Recently, adaptive schemes have been proposed in~\cite{morBenEEetal18,morDavP15} for RB and EIM bases construction. 
The authors in~\cite{morDavP15} propose the Simultaneous EIM-RB (SER) method of simultaneously enriching the RB and EIM bases for nonlinear but stationary problems. The goal is to enrich the EIM and RB basis alternately, avoiding the computation of expensive FOM
solutions for all the parameters in a given training set. In the first step, a FOM solution at a randomly chosen parameter is obtained. Based on this, the EIM basis and interpolation point are evaluated. Further, the first RB basis is built by orthogonalizing
the available snapshot from the FOM solution. In the subsequent steps, the EIM and RB basis are enriched alternately, relying only on the approximate solutions computed from the ROM simulation. In essence, the approach requires only an initial FOM simulation at a single parameter.
Since only the ROM is simulated for EIM and RB updates, the snapshots used for EI and RB construction are approximate snapshots. Thus, SER can be considered as an approximate RBM-EIM method.

A progressive EIM (PREIM) method for nonlinear, dynamical (time-dependent) systems is proposed in~\cite{morBenEEetal18}. A special case of the method is a natural extension of the SER method in~\cite{morDavP15}. The PREIM method evaluates the nonlinear function not only at the approximate FOM solution, but also at the high-fidelity FOM solution, whenever available. More precisely, if a new sample of the parameter is selected for RB enrichment, the high-fidelity solution at this sample and the corresponding many time instances need to be computed to enrich the RB bases. The nonlinear function evaluated at the high-fidelity solution of this sample can be readily obtained. For other samples in the training set, the nonlinear function is evaluated at the approximate FOM solutions computed from the ROM. Therefore, this method can be referred to as semi-approximate RBM-EIM. 

In this work, we propose a new adaptive scheme of RB-(D)EIM basis enrichment for parametric nonlinear systems. As compared with the adaptive POD-DEIM in~\cite{morFenMB17}, 

\begin{itemize}

\item we have extended the technique to parametric nonlinear systems. The extension is nontrivial, since our adaptive scheme is constructed based on POD-Greedy~\cite{morHaaO08}, tailored for parametric systems. Unlike the POD-DEIM algorithm in \cite{morFenMB17} for non-parametric systems, or the standard POD-greedy algorithm for parametric systems where (D)EIM needs to be pre-implemented by simulating the FOM at all the parameter samples in the training set before the main algorithm starts, we consider a simultaneous enrichment of RB and (D)EIM in our proposed adaptive POD-Greedy-(D)EIM algorithm starting with a single FOM simulation at the initial parameter.

\item We have made the adaptivity more flexible. The RB and (D)EIM bases enrichment is now a {\it two-way} technique. i.e., it is not only for adaptive basis extension but also for adaptive basis shrinking according to a user-defined error tolerance of the ROM, and the user-given initial basis dimensions.   

\item An improved and sharper output error indicator is derived for both non-parametric and parametric systems, which can be further used in the proposed adaptive schemes. Radial basis function interpolation is applied to compute the parameter-dependent inf-sup constant cheaply and fast, so that singular value decompositions of a full dimensional matrix at all the samples of the parameter are avoided. In \cite{morHesGB15} the authors propose a Kriging interpolation based method to estimate the inf-sup constant and also remark on the connections between Kriging and radial basis interpolation. In this work, we apply the more straightforward method of radial basis interpolation as proposed in \cite{morManN15}, where the interpolation points are adaptively constructed. Adaptivity is nevertheless not considered in \cite{morHesGB15}.

\end{itemize}

As compared with SER, PREIM in~\cite{ morBenEEetal18,morDavP15}, 

\begin{itemize}

\item we do not use the approximate state to evaluate the nonlinear function during the RB and EIM construction so that no extra errors are introduced. 
For DEIM/EIM, we only use the {\it high-fidelity} solutions that are needed for RB enrichment and are computed by FOM simulation at the parameter samples selected iteratively by the POD-Greedy algorithm. The nonlinear function is evaluated only at those high-fidelity FOM solutions that are available {\it for free}. 

\item For both methods, SER and PREIM, a new sample is selected for basis enrichment according to the approximation quality of EIM rather than the ROM quality. Furthermore, the selection process might be time consuming for dynamical systems, since it is done at every parameter sample in the training set and at each time instance corresponding to each sample. Our adaptive scheme is based on an efficient output error indicator for the ROM. At each iteration, a new sample is selected by considering both the EIM error and the RB error. The parameter selection follows POD-Greedy, but the adaptive bases enrichment is based on the separate contributions of the RB and EIM errors to the whole ROM error, thanks to the error indicator. 

\item At each iteration of either SER or PREIM, only one candidate vector is computed for EIM basis enrichment. Our proposed adaptive scheme makes adaptive EIM construction possible, meaning the number of the new EI basis vectors for basis extension could vary at each iteration. Moreover, the dimension of both RB and EI basis space can also be adaptively shrunk according to the error indicator.

\item Beyond those aspects, we have made the RB enrichment adaptive, which is not considered in either SER, PREIM or standard POD-Greedy. This means the number of POD modes to be added to the RB space is adaptively varied at each POD-Greedy iteration.
\end{itemize}
Note that in the process of constructing the ROM, many adaptive techniques are proposed with adaptivity emphases or aspects of adaptivity differing from our proposed work. For example, in \cite{morTaineA14} an adaptive POD-Greedy algorithm is proposed, but with a different sense of adaptivity. There, the algorithm focuses on adaptively enriching the training set. In contrast to {\it offline} adaptive construction of the reduced basis as considered in our work, methods in \cite{morCar15,morEttC19,morPeh18,morPehW15,morZimPW18} focus on {\it online} adaptively updating the RB or the ((D)EIM) interpolation basis. \cite{morYanP19} treats simultaneous training of RB and quadrature points for quadrature computation of the integral occurring in, e.g., finite element descretization.  The approximate quadrature computation is a different way of reducing the nonlinear complexity of EIM or DEIM than considered in our work, though the simultaneous training technique is analogous to the SER method in \cite{morDavP15}. As a result, what is to be simultaneously trained is also different. \cite{morChoet18} replaces the DEIM basis with a different basis, the motivation is to avoid a second SVD associated with the DEIM basis, so as to speed up the offline computation. It is different in the aspect of the simultaneous training considered in our work. We admit that the short review here is by no means exhaustive, and all the adaptive techniques proposed based on interpolatory MOR are not reviewed, though numerous papers exist in the literature, see, e.g., \cite{morGugAB08,morFenB19,morFenKB15}.

\paragraph*{Summary of contributions:}
\begin{itemize}
			
\item In Section 3, we propose an improved primal-dual based output error indicator. The error indicator can be seen as being composed of two parts. The first part is the product of the norms of two residuals: the dual residual and the primal residual. The second part is associated with the approximate state of the dual system. We show that the norm of the dual residual (residual of the dual system) can be further reduced by introducing more efficient solvers for the dual system. The second part of the error indicator can also be reduced by introducing a modified output. Upon convergence of the POD-Greedy algorithm, the new error indicator is shown to be much sharper than the existing ones. The indicator is derived based on a semi-implicit time discretization scheme, with explicit time discretization in the nonlinear part. Extensions to general implicit discretization schemes are possible and should be considered as future work.
				
\item In Section 4, an adaptive process for RB-(D)EIM basis generation is proposed, which we call adaptive POD-Greedy-(D)EIM. Since we improved the existing adaptive POD-DEIM using the improved error indicator and the proposed two-way approach, we also show the improved adaptive POD-DEIM in this section. 

\item To efficiently compute the inf-sup constant for error estimation, we apply the radial basis interpolation approach~\cite{morManN15} to approximate the inf-sup constant for parametric systems. It shows 
good accuracy and speed-up.

\item The proposed ideas are tested on several examples in Section 5 including two examples from chemical engineering - a fluidized bed crystallizer model and a batch chromatographic model. 
\end{itemize}

The remaining part of the paper is organized as follows. In Section 2, the standard algorithms, on which the proposed adaptive algorithm is built, are reviewed: POD, POD-Greedy, EIM and DEIM. 
Since the adaptive algorithm is based on an efficient error indicator, we introduce an improved output error indicator in Section 3. The adaptive POD-Greedy-(D)EIM is proposed in Section 4. Simulation results are presented in Section 5, and conclusions are given in Section 6.

\section{Preliminaries}
\label{sec:prelim}
In this section, we review existing algorithms for model order reduction of nonlinear parametric systems, which are the building blocks of the proposed 
adaptive algorithm. 
Consider a parametric, nonlinear dynamical system of the following form,
\begin{equation}
\begin{aligned}
	E(\mu) \dot{x}(t , \mu) &= A(\mu) x(t , \mu) + f(x(t , \mu),\mu) + B(\mu) u(t),\\
	y(t , \mu) &= C(\mu) x(t , \mu),
\end{aligned}
\label{fom}
\end{equation}
where \begin{itemize}
	\item $\mu \in \mathbb{R}^{d}$ is a vector of parameters in a parameter domain $\mathscr{P} \subset \mathbb{R}^{d}$,
	\item $x(t ,\mu), f(x(t , \mu),\mu) \in \mathbb{R}^{N}$ are the state and the state dependent nonlinear vectors respectively,
	\item $u(t) \in \mathbb{R}^{N_{I}}$ is the input signal,
	\item $y(t, \mu) \in \mathbb{R}^{N_{O}}$ is the output/quantity of interest,
	\item $E(\mu), A(\mu) \in \mathbb{R}^{N \times N}$ are the system matrices,
	\item $B(\mu) \in \mathbb{R}^{N \times N_{I}}, C(\mu) \in \mathbb{R}^{N_{O} \times N}$ are the input and output matrices, respectively.
\end{itemize}
Such systems typically arise from the spatial discretization of parametric partial differential equations (PDEs) through finite difference, finite element or finite volume methods. Since $N$ is typically large, we wish to find a system of reduced order $r \ll N$, that accurately approximates the original system solution. 
\subsection{Parametric Model Order Reduction via projection}
Assume the solution to \cref{fom} lies in a low dimensional subspace, then it is possible to construct a matrix $V \in \mathbb R^{N\times r}$ with low rank ($r \ll N$), whose columns constitute an orthogonal basis that spans this subspace, such that the solution can be approximated by the basis vectors. Then a ROM can be given via Petrov-Galerkin projection,
\begin{equation}
\begin{aligned}
	E_{r}(\mu)\dot{x}_{r}(t , \mu) &= A_{r}(\mu) x_{r}(t , \mu) + f_{r}(Vx_{r}(t , \mu),\mu) + B_{r}(\mu) u(t), \\
	y_{r}(t , \mu) &= C_{r}(\mu) x_{r}(t , \mu),
\end{aligned}
\label{rom}
\end{equation}
where \begin{itemize}
	\item $W  \in \mathbb R^{N\times r}$ is a matrix, whose columns span a test subspace,
	\item $x_{r}(t, \mu) \in \mathbb{R}^{r}$, is the reduced state vector,
	\item $f_{r}(Vx_{r}(t , \mu),\mu) \coloneqq W^{T} f(Vx_{r}(t , \mu),\mu) \in \mathbb{R}^{r}$, is the reduced nonlinear vector, 
	\item $A_{r}(\mu) \coloneqq W^{T} A(\mu) V \in \mathbb{R}^{r \times r}$, $E_{r}(\mu) \coloneqq W^{T} E(\mu) V \in \mathbb{R}^{r \times r}$ are the reduced system matrices,
	\item $B_{r}(\mu) \coloneqq W^{T} B(\mu) \in \mathbb{R}^{r \times N_{I}}, C_{r}(\mu) \coloneqq C(\mu) V \in \mathbb{R}^{N_{O} \times r}$ are the reduced input and output matrices respectively.
\end{itemize}
For parametric nonlinear dynamical systems, snapshot based methods are widely used to construct the basis vectors in $V$ and Galerkin projection is often chosen to construct the ROM, i.e. $W=V$. POD is either used as an independent method or as an intermediate step in POD-Greedy~\cite{morHaaO08} for ROM construction. Throughout the paper, $\|\cdot\|$ refers to the vector $2$-norm or the matrix spectral norm.

\subsection{POD and POD-Greedy}
Proper orthogonal decomposition (POD) is a procedure of constructing an optimal orthonormal basis that is used to approximate a given dataset. Consider a matrix $X \in \mathbb{R}^{N \times K}$, consisting of the given data. Let rank($X$) = $r_{X}$. For any $\ell \leq r_{X}$, POD gives rise to an orthonormal basis $\{u_{i}\}_{i=1}^{\ell}$ satisfying the following optimality criterion,
\begin{equation}
	\begin{aligned}
		\{u_{i}\}_{i=1}^{\ell} = \arg \min_{\{\tilde{u}_{i}\}_{i=1}^{\ell}} \sum_{j=1}^{K} \bigg\| x_{j} - \sum_{k=1}^{l} \langle x_{j}, \tilde u_{k} \rangle \tilde u_{k}  \bigg\| ^{2},\\
		\text{s.t}, \langle \tilde u_{i}, \tilde u_{j} \rangle = \delta_{ij}, 1 \leq i,j \leq \ell.
	\end{aligned}
	\label{pod_opt}
\end{equation}
The basis vectors $\{u_{i}\}_{i=1}^{\ell}$ are called the POD basis, and they are obtained through the Singular Value Decomposition (SVD) of $X$. When POD is used to compute the projection matrix $V$ for MOR of a dynamical system, the system is first simulated to obtain solutions $x(t_{1}),\,x(t_{2}),\,\ldots,\,x(t_{K})$ at (selected) time instances $t_{1},\,t_{2},\,\ldots,\,t_{K}$, which are called {\it snapshots}, then the POD basis of the snapshot matrix $X \coloneqq [x(t_{1}), x(t_{2}), \ldots, x(t_{K})]$ defines the projection matrix $V$. \cref{alg_pod} demonstrates the construction of $V$ through POD.
\renewcommand{\algorithmicrequire}{\textbf{Input:}}
\renewcommand{\algorithmicensure}{\textbf{Output:}}	
\begin{algorithm}[t!]
    \begin{algorithmic}[1]
		\caption{Proper orthogonal decomposition (POD)}
		\label{alg_pod}
       \Require Snapshots $X = \big[ x( t_{1}),  x( t_{2}), \ldots, x( t_{K}) \big] $, tolerance $\epsilon_{\text{POD}}$ (a heuristically chosen small value).
			
       \Ensure POD basis matrix $V$.
			
			 \State Perform $X \xrightarrow{\text{SVD}} U \Sigma W ^{T}$, $\Sigma \coloneqq \begin{bmatrix}
			 D&0\\ 0 & 0\\
			 \end{bmatrix}$, $D \coloneqq diag(\sigma_{1}, \sigma_{2}, \ldots, \sigma_{r_{X}})$.
			
			 \State Find $r$, s.t., $  \sum_{i=r + 1 }^{r_{X}}\sigma_{i} \big/ \sum_{i = 1}^{r_{X}} \sigma_{i} < \epsilon_{\text{POD}}$, $r_{X}$ is the number of non-zero singular values from $\Sigma$, $V = U(: \, , 1: r)$.
    \end{algorithmic}
\end{algorithm}
\renewcommand{\algorithmicrequire}{\textbf{Input:}}
\renewcommand{\algorithmicensure}{\textbf{Output:}}	
\begin{algorithm}[b!]
	\begin{algorithmic}[1]
		\caption{Standard POD-Greedy \cite{morHaaO08}}
		\label{alg_podgreedy}
		\Require Parameter training set $\Xi \subset \mathscr{P}$, tolerance \texttt{tol}.
		\Ensure Basis matrix $V$.			
		\State Initialize. $V = [\,]$, $\mu^{*} \in \Xi$.
		\While $\Delta(\mu^{*})$ $>$ \texttt{tol}
		\State Simulate FOM at $\mu^{*}$ and obtain snapshots, $X =[x(t_{1}, \mu^{*}), x(t_{2}, \mu^{*}), \ldots,  x(t_{K}, \mu^{*})]$.
		\State Update the projection matrix $\bar{X} \coloneqq$ \big($X -\text{Proj}_{\mathcal{V}}(X)$\big) $\xrightarrow{\text{SVD}}$ $U \Sigma W^{T}$, $\mathcal{V}$ is the subspace spanned by $V$. 
		\State $V \leftarrow{} \texttt{orth} \{V, U(: \, , 1 )\}$. 
		\State $\mu^{*} \coloneqq$ arg $\max\limits_{\mu \in \Xi} \Delta(\mu)$.
		\EndWhile
	\end{algorithmic}
\end{algorithm}

For parametric systems, one needs a suitable method to capture the variation in the solution manifold due to the parameter variations. For this purpose, we adopt the POD-Greedy approach~\cite{morHaaO08}. It relies on POD in time domain and a greedy selection in the parameter domain; it is a standard implementation of the RBM for parametric dynamical systems. The details are given in \cref{alg_podgreedy}.
A crucial ingredient of the algorithm is the availability of a cheap and sharp error estimator ${\Delta}$. At each iteration, the error estimator must be evaluated $n_{\text{train}}$ times, where $n_{\text{train}}$ is the cardinality of the parameter training set $\Xi$ \cite{morQuaMN16}.  Therefore, it is important that the error estimator is computed in a rapid and reliable manner.
\subsection{EIM and DEIM}
From \cref{rom}, we observe that the complexity of evaluating the nonlinear term $f_{r}$, still depends on the FOM, since we need to first evaluate it at $V x_{r} \in \mathbb{R}^{N}$ for a given parameter $\mu$. Some interpolation techniques are proposed to reduce this complexity. 
\paragraph*{Empirical interpolation.}
The Empirical Interpolation Method was introduced in the context of the Reduced Basis Method, in order to reduce the complexity in evaluating nonaffine parameter dependence or nonlinear dependance in the ROM \cite{morBarMNetal04,morGreMetal07}. Both the interpolation indices and the interpolation basis vectors are chosen in a greedy manner.
\paragraph*{Discrete empirical interpolation.}
The discrete variant of EIM, i.e. DEIM was introduced in \cite{morChaS10}.
The main differences of DEIM from EIM include two aspects. One is the interpolation basis construction: DEIM uses the {\it pre-computed POD basis} of the nonlinear snapshot matrix via SVD, as the interpolation basis, whereas EIM constructs the interpolation basis iteratively through a greedy algorithm and from the nonlinear snapshots. The other aspect is the interpolation indices selection: DEIM selects the interpolation indices by looking at the distance between the current {\it interpolation basis} vector and its approximation obtained  via interpolation using the previous interpolation basis vectors. However, EIM chooses the interpolation indices based on the distance between the current {\it nonlinear snapshot} vector and its approximation obtained via interpolation using the previous interpolation basis vectors.
\newline \indent
Both EIM and DEIM consider the following approximation for the nonlinear term,
\begin{equation}
	f(x(t , \mu),\mu) \approx U_{f} c(t,\mu),
\label{nonlin}
\end{equation}
where $U_{f}$ is the matrix of interpolation vectors and $c(t,\mu)$ is the vector of the time-parameter dependent coefficients.
The interpolation is achieved by choosing a few interpolation indices where the approximation matches the original function, i.e.
\begin{equation}
	P^{T} f(x(t , \mu),\mu) = P^{T} U_{f} c(t,\mu),
\label{nonlin_interp}
\end{equation}
where $P = [e_{\wp_{1}}, e_{\wp_{2}}, \ldots,  e_{\wp_{\ell}}]$ is a column permutation of the identity matrix, such that the $i^{\text{th}}$ column $e_{\wp_{i}}$, is all zeros except for the $\wp_{i}$-th row where the value is 1. It stores
all the interpolation indices $\wp_{1}, \ldots, \wp_{\ell}$.
We provide both algorithms as \cref{alg_eim} and~\cref{alg_deim} for the sake of completeness.
\renewcommand{\algorithmicrequire}{\textbf{Input:}}
\renewcommand{\algorithmicensure}{\textbf{Output:}}	
\begin{algorithm}[t!]
    \begin{algorithmic}[1]
		\caption{Empirical interpolation method (EIM)\cite{morBarMNetal04}}
		\label{alg_eim}
       \Require Snapshots of the nonlinear vector at a set of parameter samples,
              
       \noindent $F = \big[ f( x(t_{1}, \mu_{1}) \, , \mu_{1}), \ldots, f( x(t_{K}, \mu_{1}) \, , \mu_{1}), \ldots, f( x(t_{1}, \mu_{n_{\text{s}}}) \, , \mu_{n_{\text{s}}}), \ldots, f( x(t_{K}, \mu_{n_{\text{s}}}) \, , \mu_{n_{\text{s}}}) \big] \coloneqq [f_1,\ldots, f_{Kn_s}] \in \mathbb R^{N\times K\cdot n_s} $, where $n_{\text{s}}$ is the total number of parameter samples, maximal iteration steps \texttt{max\_iter}, tolerance $\epsilon_{\text{EI}}$.
			
       \Ensure EIM basis $U_{f} = \big[\zeta_{1}, \zeta_{2}, \ldots, \zeta_{\ell}  \big]$, index matrix $P = [e_{\wp_{1}},  e_{\wp_{2}}, \ldots, e_{\wp_{\ell}}]$.
			
       \State Initialize $U_{f} = [\,]$,  $P = [\,]$, $m = 1$.
			
			 \State Select the snapshot that maximizes the norm. $\eta_{1} = \arg \max \limits_{\substack{f_{i} \\ 1 \leq i \leq K\cdot n_s}} \| f_{i} \| $,
			 the interpolation index is given as the position of the row element of $\eta_{1}$ with maximal magnitude. $\big[ \sim, \wp_{1} \big] = \texttt{max}(| \eta_{1} |)$, where $\eta_{1} = [\eta_{11}, \eta_{12}, \ldots, \eta_{1N}]^{T}$. Here, \texttt{max}() refers to the MATLAB\textsuperscript{$\circledR$} function.
			 \State The first interpolation basis vector, $\zeta_{1} = \eta_{1}/\eta_{1, \wp_{1}}$.
			 
			 \State  Update the interpolation matrix $U_{f} \leftarrow{} \big[U_{f},   \zeta_{1}\big]$ , $P \leftarrow{} [P,  e_{\wp_{1}}]$.
			
			 \While{$m \leq$ $\texttt{max\_iter}$}
							
					\State $m = m + 1$.
					
					\State Form the $m^{\text{th}}$ EIM interpolation for each snapshot vector in $F$: $\mathcal{I}_{m}\big[f_i\big] = U_{f} \big( P^{T} U_{f} \big)^{-1} P^{T} f_i$, $i = 1, \ldots, K \cdot n_{s}$.
					\State Find $f^{*} =  \arg \max \limits_{\substack{f_{i} \\ 1 \leq i \leq K\cdot n_s}} \Big \| f_{i} - \mathcal{I}_{m}[f_{i}]  \Big\|$. Determine residual $\eta_{m} = f^{*} - \mathcal{I}_{m}[f^{*}]$.
								
					\If{$\| \eta_{m} \| <$  $\epsilon_{\text{EI}}$}
						\State $m = m - 1$.
						
						\State stop.
					\EndIf
									
					\State $\big[ \sim, \wp_{m} \big] =\texttt{max}(| \eta_{m} |)$. Set $\zeta_{m} = \eta_{m}/\eta_{m, \wp_{m}}$				
					
					\State $U_{f} \leftarrow{} \big[U_{f},   \zeta_{m}\big]$, $P \leftarrow{} \big[ P,  e_{\wp_{m}}\big]$.
			 \EndWhile
    \end{algorithmic}
\end{algorithm}
\renewcommand{\algorithmicrequire}{\textbf{Input:}}
\renewcommand{\algorithmicensure}{\textbf{Output:}}	
\begin{algorithm}[h!]
    \begin{algorithmic}[1]
		\caption{Discrete empirical interpolation method (DEIM)\cite{morChaS10}}
		\label{alg_deim}
       \Require Snapshots of the nonlinear vector at a set of parameter samples,
       
       \noindent $F = \big[ f( x(t_{1}, \mu_{1}) \, , \mu_{1}), \ldots, f( x(t_{K}, \mu_{1}) \, , \mu_{1}), \ldots, f( x(t_{1}, \mu_{n_{\text{s}}}) \, , \mu_{n_{\text{s}}}), \ldots, f( x(t_{K}, \mu_{n_{\text{s}}}) \, , \mu_{n_{\text{s}}}) \big]$, where $n_{\text{s}}$ is as defined in \cref{alg_eim}, $\epsilon_{\text{POD}}$.
			
       \Ensure DEIM basis $U_{f}$, index matrix $P = [e_{\wp_{1}}, e_{\wp_{2}}, \ldots, e_{\wp_{\ell}}]$.
			
       \State Initialize $U_{f} = [\,]$,  $P = [\,]$.
			
			 \State Perform $F \xrightarrow{\text{SVD}} U \Sigma W ^{T}$, where $U \coloneqq [u_{1}^{f}, u_{2}^{f}, \ldots, u_{r_{F}}^{f}]$, $\Sigma \coloneqq \begin{bmatrix}
			 D_{f}&0\\ 0 & 0\\
			 \end{bmatrix}$, $D_{f} \coloneqq diag(\sigma_{1}^{f}, \sigma_{2}^{f}, \ldots, \sigma_{r_{F}}^{f})$.	
			 \State Find $\ell$, s.t., $  \sum_{i=\ell + 1 }^{r_{F}}\sigma_{i}^{f} \big/ \sum_{i = 1}^{r_{F}} \sigma_{i} ^{f} < \epsilon_{\text{POD}}$, $r_{F}$ is the number of non-zero singular values in $\Sigma$. 
			
			 \State Select the first interpolation index as the position of the row element with maximal magnitude in the first column of $U$: $\wp_{1} = \text{arg} \max \limits_{j \in \{1, 2, \ldots, N \}} | u_{1j}^{f} |$, where $u_{1}^{f} = [u_{11}^{f}, u_{12}^{f}, \ldots, u_{1N}^{f}]^{T}$.
			 \State $U_{f} \leftarrow{} u_{1}^{f}$, $P \leftarrow{} [e_{\wp_{1}}]$.
			
			 \For{$i = 2 $ to $\ell$}
						\State Solve $(P^{T}U_{f}) c = P^{T} u_{i}^{f}$, for $c$.
						\State Form the residual, $r_{i} = u_{i}^{f} - U_{f}c$. 
						\State $\wp_{i} = \text{arg} \max \limits_{j \in \{1, 2, \ldots, N \}} | r_{ij} |$. Here, $r_{i} = [r_{i1}, r_{i2}, \ldots, r_{iN}]^{T}$.
						\State  $U_{f} \leftarrow{} [U_{f}, u_{i}^{f}]$, $P \leftarrow{} [P, e_{\wp_{i}}]$.
			 \EndFor
    \end{algorithmic}
\end{algorithm}
\paragraph*{ROM after interpolation.}
Using either EIM or DEIM, the ROM in \cref{rom} can now be evaluated as,
\begin{equation}
\begin{aligned}
	E_{r}(\mu)\dot{x}_{r}(t , \mu) &= A_{r}(\mu) x_{r}(t , \mu) + V^{T} U_{f} \big( P^{T} U_{f}\big)^{-1} P^{T} f(V x_{r}(t , \mu),\mu) + B_{r}(\mu) u(t), \\
	y_{r}(t , \mu) &= C_{r}(\mu) x_{r}(t , \mu).
\end{aligned}
\label{rom_deim}
\end{equation}
\begin{remark}
The term $V^{T} U_{f} \big( P^{T} U_{f}\big)^{-1}$ in \cref{rom_deim} can be precomputed. In evaluating $P^{T} f(V x_{r}(t, \mu), \mu)$, only a few terms (say, $\ell_{\text{EI}} \ll N$ terms) of the nonlinear vector $f(V x_{r}(t, \mu), \mu)$ needs to be evaluated, thereby removing the bottleneck in computing the nonlinear term of the ROM.
\end{remark}
\begin{remark}
	\cref{alg_podgreedy} can be combined with either EIM or DEIM for MOR of nonlinear systems. The interpolation bases are precomputed before starting the greedy loop. \cref{alg_podgreedy_ei} illustrates the Standard POD-Greedy-(D)EIM algorithm. Clearly, in Step 1, the nonlinearity needs to be evaluated at all samples of $\mu \in \Xi$. This involves computing the full order solutions for all parameters in the training set.
\end{remark}
\renewcommand{\algorithmicrequire}{\textbf{Input:}}
\renewcommand{\algorithmicensure}{\textbf{Output:}}	
\begin{algorithm}[b!]
	\begin{algorithmic}[1]
		\caption{Standard POD-Greedy-(D)EIM \cite{morQuaMN16}}
		\label{alg_podgreedy_ei}
		\Require Parameter training set $\Xi \subset \mathscr{P}$, tolerance \texttt{tol}, maximal number of iterations $\texttt{max\_iter}$, snapshot matrix of the nonlinear vector,
		
		\noindent $F = \big[ f( x(t_{1}, \mu_{1}) \, , \mu_{1}), \ldots, f( x(t_{K}, \mu_{1}) \, , \mu_{1}), \ldots, f( x(t_{1}, \mu_{n_{\text{train}}}) \, , \mu_{n_{\text{train}}}), \ldots, f( x(t_{K}, \mu_{n_{\text{train}}}) \, , \mu_{n_{\text{train}}}) \big] $, recall that $n_{\text{train}}$ is the cardinality of the training set $\Xi$.
		\Ensure Basis matrix $V$.			
		\State (D)EIM interpolation basis calculated using \cref{alg_eim} or \cref{alg_deim}.
		\State Call \cref{alg_podgreedy}, where instead of the ROM in \cref{rom}, the ROM in \cref{rom_deim} is simulated at each iteration.
	\end{algorithmic}
\end{algorithm}

\section{Improved \textit{a posteriori} Output Error Estimation}
\label{sec:aposterr}
Error estimation plays a crucial role in both standard POD-Greedy and the proposed adaptive algorithms. Error estimators for RBM were mostly proposed based on the weak form of the PDE arising from the finite element discretization \cite{morHaaO08, morVeretal03, morGre05}. In~\cite{morZhaFLetal15}, an efficient output error estimator was proposed in the discretized vector space, which makes the error estimator straightforwardly applicable to the already discretized systems. There, the authors propose an~\textit{a posteriori} output error estimator for the ROM in \cref{rom_deim}. It avoids the accumulation of the residual over time, a phenomenon often seen in other error estimation approaches \cite{morHaaO08, morGre05, morHaaO11}, and therefore is often much sharper than the other error estimators. Moreover, the error estimator is applicable to nonlinear dynamical systems. However, we observed that it is still possible to further improve the sharpness and computational efficiency of the error estimator from
different aspects. In the following we briefly review the output error estimation in~\cite{morZhaFLetal15} and propose an improved output error indicator, as well as a more efficient way of computing the error indicator. For the sake of concise notation, we do not explicitly show the parameter dependance of the system matrices $(E, A, B, C)$ and vectors $ x(t, \mu),\, f(x(t, \mu), \mu),\,y(t, \mu)$. The same shall also be  followed by the corresponding dual system matrices and vectors that will be introduced below.
\subsection{Output error estimator from ~\cite{morZhaFLetal15}}
In this subsection, we briefly review the output error estimator in~\cite{morZhaFLetal15}. As in~\cite{morZhaFLetal15}, we consider systems with single output, i.e. $C \in \mathbb{R}^{1 \times N}$ and $y$ is a scalar in \cref{fom}. We will address the error estimation for multiple outputs in \cref{rem:multop}. 
Consider a semi-implicit scheme for the time integration of \cref{fom},
\begin{equation*}
\begin{aligned}
\tilde{E}^k x^{k+1} &= \tilde A^kx^k+ \Delta t_k f(x^{k}) + \Delta t_k B^k u^{k}, \\
y^{k+1} &= C x^{k+1}.
\end{aligned}
\label{disc_primalfom}
\end{equation*}
It is called the primal system. For a sharp estimation of the output error, the following dual system is needed,
\begin{equation*}
\begin{aligned}
(\tilde{E}^k)^{T} x_{\text{du}}^{k+1} &= -C^{T}.
\end{aligned}
\label{disc_dualfom}
\end{equation*}
In general, the system matrices may be time-dependent if the time step $\Delta t_k$ changes over time, and therefore they are associated with the superscript $k$. If we consider constant time steps for simplicity, i.e. $\Delta t_k =\Delta t$, then the superscript $k$ can be removed, and the primal and dual systems can be simplified to
\begin{equation}
\begin{aligned}
\tilde{E} x^{k+1} &= \tilde A+ \Delta t  f(x^{k}) + \Delta t B u^{k}, \\
y^{k+1} &= C x^{k+1}.
\end{aligned}
\label{disc_primalfom}
\end{equation}
and
\begin{equation}
\begin{aligned}
\tilde{E}^{T} x_{\text{du}}&= -C^{T},
\end{aligned}
\label{disc_dualfom}
\end{equation}
respectively.
Note that the dual system becomes a steady system in the simplified case.
For clarity we use the simplified case to describe the error estimator, though it is well defined for the general case, too.

Applying the same time integration scheme to the ROM \cref{rom} results in,
\begin{equation}
\begin{aligned}
E_{r}(\mu) x_{r}^{k+1} &= A_{r}(\mu) x_{r}^{k} + \Delta t f_r(V x_{r}^{k}) + \Delta t B_{r} u^{k}, \\
y_{r}^{k+1} &= C_{r} x_{r}^{k+1}.
\end{aligned}
\label{disc_primalrom}
\end{equation}
It is clear that the time-discrete ROM in \cref{disc_primalrom} is exactly the ROM of the primal system. In~\cite{morZhaFLetal15}, a ROM of the dual system is obtained by Galerkin projection using 
the same projection matrix $V$ as for \cref{rom}, i.e.
\begin{equation}
\begin{aligned}
(V^T\tilde{E}V)^{T} x^{\text{du}}_r&= -V^TC^{T}.
\end{aligned}
\label{disc_dualrom}
\end{equation}
The approximate solutions, $\hat{x}^{k+1} \coloneqq V x_{r}^{k+1}$ to the primal system and $\hat{x}_{\text{du}} \coloneqq V x^{du}_r$ to the dual system,
introduce their residuals, respectively,
\begin{equation}
\begin{aligned}
r_{\text{pr}}^{k+1} &= \tilde{A} \hat{x}^{k} + \Delta t f(\hat{x}^{k}) + \Delta t B u^{k} - \tilde{E} \hat{x}^{k+1},\\
r_{\text{du}} &= -C^{T} - \tilde{E}^{T} \hat{x}_{\text{du}}.
\end{aligned}
\label{residues}
\end{equation}
Using \cref{disc_primalfom,,disc_dualfom,,disc_primalrom} the error in the output can be shown to be bounded as
\begin{equation}
\begin{aligned}
| y^{k+1} - y_{r}^{k+1} | &\leq \phi^{k+1} \| r_{\text{pr}}^{k+1} \|.
\end{aligned}
\label{err_est}
\end{equation}
Here, $\phi^{k+1} \coloneqq \rho^{k+1} \, (\| \tilde{E}^{-1} \| \| r_{\text{du}} \| + \| \hat{x}_{\text{du}} \|)$.
The term 
\begin{equation}
\label{eq:rho}
\rho^{k+1} \coloneqq \dfrac{ \| \tilde{r}^{k+1}_{\text{pr}} \| }{\| r^{k+1}_{\text{pr}} \|},
\end{equation}
where
\begin{equation}
\begin{aligned}
\tilde{r}_{\text{pr}}^{k+1} &= \tilde{A} x^{k} + \Delta t f(x^{k}) + \Delta t B u^{k} - \tilde{E} \hat{x}^{k+1},\\
&=\tilde E (x^{k+1}-\hat{x}^{k+1}).
\end{aligned}
\label{auxresidual}
\end{equation}
is an auxiliary residual obtained by replacing $\hat{x}^{k+1}$ in the ``right-hand-side'' part ($\tilde{A} \hat{x}^{k} + \Delta t f(\hat{x}^k) + \Delta t B u^{k}$)  of $r^{k+1}_{\text{pr}}$ (see \cref{residues}) with the true solution $x^{k+1}$. It leads to a relation to the state error $x^{k+1}-\hat{x}^{k+1}$.

\begin{remark}
	\label{rem:rho}
	Note that the error bound only depends on the residuals at the current time step $t_{k+1}$, and avoids the error accumulation over time. The detailed explanation and proof of the error estimator can be found in \cite{morZhaFLetal15}. It is proved in~\cite{morZhaFLetal15} that, under mild assumptions, $\rho^{k+1}$ is lower and upper bounded. Upon convergence of the POD-Greedy algorithm,  $\rho^{k+1}$ should tend to be $1$. This conclusion will be demonstrated numerically in the section on simulation results.
	
	From \cref{err_est}, \cref{auxresidual}, we note that, at each time instance, the true solution $x^{k+1}$ is required for computing $ \tilde{r}^{k+1}_{\text{pr}}$ in the expression of  $\rho^{k+1}$. This can be avoided by approximating $\rho^{k+1}$ with a time-averaged value $\bar{\rho}$ obtained as,
	\begin{equation}
	\bar{\rho} = \frac{1}{K}\sum_{i=1}^{K} \rho^{i},
	\label{rho_est}
	\end{equation}
	where $\rho^{i}$ corresponds to $\tilde{r}^{i}_{\text{pr}}$, which requires the true solution $x^{i}$ at time instance $t_i$. If we take $t_i$ as the time instances at which the snapshots are computed for bases enrichment, then  $x^{i}$ are exactly the snapshots, which are available for free.  
	
	Furthermore, $\bar{\rho}$ is not available for all $\mu$. Therefore, when used inside a greedy algorithm, at each iteration, we simply approximate the value of $\bar{\rho}$ with, \\
	\[ \bar{\rho} \approx \bar{\rho}(\mu^{*}), \]
	where $\mu^{*}$ is the parameter chosen at the current iteration of the POD-Greedy algorithm, so that the snapshots at $\mu^{*}$ are available to compute $\bar{\rho}(\mu^{*})$.
\end{remark}

With the approximations in \cref{rem:rho}, the error bound becomes an error indicator, i.e.
\begin{equation}
\label{eq:error_est}
| y^{k+1} - y_{r}^{k+1} | \lesssim \bar{\Phi} \| r_{\text{pr}}^{k+1} \| =:\bar \Delta,
\end{equation}
where $\bar{\Phi} \coloneqq \bar{\rho}\,(\| \tilde{E}^{-1} \| \| r_{\text{du}} \| + \| \hat{x}_{\text{du}} \|)$.

It can be seen that the error indicator $\bar \Delta$ in \cref{eq:error_est} consists of two parts: 
$$\bar \Delta_1 \coloneqq \bar{\rho} (\| \tilde{E}^{-1} \| \| r_{\text{du}} \| \| r_{\text{pr}}^{k+1} \|)$$  
and $$\bar \Delta_2 \coloneqq \bar{\rho}\| \hat{x}_{\text{du}} \| \|r_{\text{pr}}^{k+1}\|.$$
The decay rate of $\bar \Delta_1$ is determined by the two residuals and the decay speed of $\bar \Delta_2$ depends on the product of the primal residual norm and the norm
of the approximate dual state  $\|\hat{x}_{\text{du}} \|$.  
We aim to improve the efficiency of the error indicator by considering each of them. On the one hand, we seek a corrected output so that 
the second part $\bar \Delta_2$ is modified to a form with faster decay rate. On the other hand, we try to use more suitable methods to obtain smaller $\| r_{\text{du}}\|$ than that in~\cite{morZhaFLetal15}. In this way, both $\bar \Delta_1$ and $\bar \Delta_2$ could decay faster, which results in sharper error estimation.
\subsection{Modified $\bar \Delta_2$ with corrected output}
We consider a correction term to the estimated output quantity given as,
\begin{equation}
\bar{y}_{r}^{k+1} = y_{r}^{k+1} - \big(\hat{x}_{\text{du}} \big)^{T}r_{\text{pr}}^{k+1}.
\label{mod_out}
\end{equation}
Similar techniques of output correction can be found in \cite{morGreP05,PieG00}, which are inherited from error analysis for finite element discretization \cite{morPerP98,morBecR98}. The resulting error estimation based on the correction term in our work differs from those in \cite{morGreP05,PieG00}. As far as we can see, the error bound derived in \cite{PieG00} estimates the numerical discretization error for steady systems, and the error bound in \cite{morGreP05} is only applicable to RBM for linear PDEs.
\begin{theorem}
	Given the discrete FOM in \cref{disc_primalfom} and the discrete ROM  in \cref{disc_primalrom}, assuming $\tilde{E}$ is non-singular at all values of $\mu$, we have the following error bound for the modified output term in \cref{mod_out},
	\begin{equation}
	| y^{k+1} - \bar{y}_{r}^{k+1} | \leq \| \tilde{E}^{-1} \| \| r_{\text{du}} \| \| \tilde{r}_{\text{pr}}^{k+1} \| + \| \hat{x}_{\text{du}} \| \| r_{\text{pr}}^{k+1} - \tilde{r}_{\text{pr}}^{k+1} \|,
	\end{equation}
\end{theorem}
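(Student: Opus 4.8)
The plan is to reduce everything to the state error $e^{k+1} \coloneqq x^{k+1}-\hat{x}^{k+1}$ and then exploit the two defining residual relations. First I would observe that, since $C_{r}=CV$ and $\hat{x}^{k+1}=Vx_{r}^{k+1}$, the uncorrected output error is exactly $y^{k+1}-y_{r}^{k+1}=C\big(x^{k+1}-\hat{x}^{k+1}\big)=Ce^{k+1}$. Next I would use the dual FOM \cref{disc_dualfom}, which upon transposing gives $C=-x_{\text{du}}^{T}\tilde{E}$, together with the identity $\tilde{r}_{\text{pr}}^{k+1}=\tilde{E}e^{k+1}$ already recorded in \cref{auxresidual}. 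Combining these yields the clean expression $Ce^{k+1}=-x_{\text{du}}^{T}\tilde{E}e^{k+1}=-x_{\text{du}}^{T}\tilde{r}_{\text{pr}}^{k+1}$, so the uncorrected error is a single inner product against the \emph{exact} dual solution.

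Adding the correction term from \cref{mod_out} then gives
\begin{equation*}
y^{k+1}-\bar{y}_{r}^{k+1}=-x_{\text{du}}^{T}\tilde{r}_{\text{pr}}^{k+1}+\hat{x}_{\text{du}}^{T}r_{\text{pr}}^{k+1}.
\end{equation*}
The decisive manipulation is to add and subtract $\hat{x}_{\text{du}}^{T}\tilde{r}_{\text{pr}}^{k+1}$, regrouping the right-hand side as
\begin{equation*}
-(x_{\text{du}}-\hat{x}_{\text{du}})^{T}\tilde{r}_{\text{pr}}^{k+1}+\hat{x}_{\text{du}}^{T}\big(r_{\text{pr}}^{k+1}-\tilde{r}_{\text{pr}}^{k+1}\big).
\end{equation*}
This is the step I expect to be the crux: recognizing that the role of the output correction $-(\hat{x}_{\text{du}})^{T}r_{\text{pr}}^{k+1}$ is precisely to convert the inner product with the exact dual $x_{\text{du}}$ into one with the dual \emph{error} $x_{\text{du}}-\hat{x}_{\text{du}}$, while spawning a second term carrying the residual \emph{difference} $r_{\text{pr}}^{k+1}-\tilde{r}_{\text{pr}}^{k+1}$; both factors are expected to be small upon convergence of the POD-Greedy algorithm, which is what makes the corrected estimate sharper.

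It remains to bound the two terms. For the first, I would invoke the dual residual relation, which rewrites as $r_{\text{du}}=-C^{T}-\tilde{E}^{T}\hat{x}_{\text{du}}=\tilde{E}^{T}(x_{\text{du}}-\hat{x}_{\text{du}})$, hence $x_{\text{du}}-\hat{x}_{\text{du}}=(\tilde{E}^{-1})^{T}r_{\text{du}}$ by the assumed non-singularity of $\tilde{E}$. Then $(x_{\text{du}}-\hat{x}_{\text{du}})^{T}\tilde{r}_{\text{pr}}^{k+1}=r_{\text{du}}^{T}\tilde{E}^{-1}\tilde{r}_{\text{pr}}^{k+1}$, and Cauchy--Schwarz together with submultiplicativity of the spectral norm gives the bound $\|\tilde{E}^{-1}\|\,\|r_{\text{du}}\|\,\|\tilde{r}_{\text{pr}}^{k+1}\|$. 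For the second term a single application of Cauchy--Schwarz gives $\|\hat{x}_{\text{du}}\|\,\|r_{\text{pr}}^{k+1}-\tilde{r}_{\text{pr}}^{k+1}\|$. Applying the triangle inequality to the two-term decomposition yields the claimed bound. The estimates here are all elementary; the only genuine content lies in the exact algebraic identities of the first two paragraphs, so I would state those explicitly and keep the closing norm estimates brief.
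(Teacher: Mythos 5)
Your proposal is correct and follows essentially the same route as the paper's proof: the same reduction of the corrected output error to $-(x_{\text{du}}-\hat{x}_{\text{du}})^{T}\tilde{r}_{\text{pr}}^{k+1}+\hat{x}_{\text{du}}^{T}\big(r_{\text{pr}}^{k+1}-\tilde{r}_{\text{pr}}^{k+1}\big)$ via adding and subtracting $\hat{x}_{\text{du}}^{T}\tilde{r}_{\text{pr}}^{k+1}$, the same use of $r_{\text{du}}=\tilde{E}^{T}(x_{\text{du}}-\hat{x}_{\text{du}})$, and the same closing triangle/Cauchy--Schwarz estimates. The only cosmetic difference is that you transpose the dual equation to $C=-x_{\text{du}}^{T}\tilde{E}$ before contracting with the state error, whereas the paper multiplies the dual equation by $(x^{k+1}-\hat{x}^{k+1})^{T}$ first and then transposes; the algebra is identical.
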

\begin{proof}
	The error in the modified output can be represented as,
	\begin{align}
	y^{k+1} - \bar{y}_{r}^{k+1} &= C (x^{k+1} - \hat{x}^{k+1}) + (\hat{x}_{\text{du}})^{T} r_{\text{pr}}^{k+1}.
	\label{proof: eqn 3}
	\end{align}		
	Multiplying $(x^{k+1} - \hat{x}^{k+1})^{T}$  on both sides of \cref{disc_dualfom} we get,
	\begin{align}
	(x^{k+1} - \hat{x}^{k+1})^{T} \tilde{E}^{T} x_{\text{du}} &= -(x^{k+1} - \hat{x}^{k+1})^{T} C^{T}.
	\label{proof:eqn 1}
	\end{align}
	Transposing the above equation we obtain,
	\begin{align}
	(x_{\text{du}})^{T}\tilde{r}_{\text{pr}}^{k+1} &= -C\big(x^{k+1} - \hat{x}^{k+1}\big),
	\label{proof: eqn 2}
	\end{align}
	where we have made use of \cref{auxresidual}. 
	Next, we simply substitute \cref{proof: eqn 2} into \cref{proof: eqn 3}, followed by addition and subtraction of the term $(\hat{x}_{\text{du}})^{T}\tilde{r}_{\text{pr}}^{k+1}$ to obtain,
	\begin{equation}
	\begin{aligned}
	y^{k+1} - \bar{y}_{r}^{k+1} &= - (x_{\text{du}})^{T}\tilde{r}_{\text{pr}}^{k+1} + (\hat{x}_{\text{du}})^{T}r_{\text{pr}}^{k+1} \\
	&= - (x_{\text{du}})^{T}\tilde{r}_{\text{pr}}^{k+1} + (\hat{x}_{\text{du}})^{T}r_{\text{pr}}^{k+1} + (\hat{x}_{\text{du}})^{T}\tilde{r}_{\text{pr}}^{k+1} - (\hat{x}_{\text{du}})^{T}\tilde{r}_{\text{pr}}^{k+1}\\
	&= -
	(x_{\text{du}} - 	\hat{x}_{\text{du}} )^{T} \tilde{r}_{\text{pr}}^{k+1} + (\hat{x}_{\text{du}})^{T} \big( r_{\text{pr}}^{k+1} -  \tilde{r}_{\text{pr}}^{k+1} \big).	
	\end{aligned}
	\label{proof: eqn 4}
	\end{equation}
	Consider now the dual system residual as given in \cref{residues}. It can be shown that
	\begin{equation}
	\begin{aligned}
	r_{\text{du}} &= \tilde{E}^{T} \big( x_{\text{du}} - \hat{x}_{\text{du}}  \big),\\
	\big( x_{\text{du}} - \hat{x}_{\text{du}}  \big) &= \tilde{E}^{-T} r_{\text{du}}.
	\label{proof: eqn 6}
	\end{aligned}
	\end{equation}
	Inserting \cref{proof: eqn 6} into \cref{proof: eqn 4} yields
	\begin{align}
	y^{k+1} - \bar{y}_{r}^{k+1} &= - (r_{\text{du}})^{T} (\tilde{E})^{-1} \tilde{r}_{\text{pr}}^{k+1} + (\hat{x}_{\text{du}})^{T} \big( r_{\text{pr}}^{k+1} -  \tilde{r}_{\text{pr}}^{k+1} \big).
	\label{proof: eqn 7}
	\end{align}
	From the triangle and Cauchy-Schwarz inequalities we obtain,
	
	\begin{equation}
	\begin{array}{rl}
	| y^{k+1} - \bar{y}_{r}^{k+1} | &=  \| -(r_{\text{du}})^{T} \tilde{E}^{-1} 	 \tilde{r}_{\text{pr}}^{k+1} + (\hat{x}_{\text{du}})^{T} (r_{\text{pr}}^{k+1} - \tilde{r}_{\text{pr}}^{k+1})\|, \\
	&\leq \| \tilde{E}^{-1} \| \| r_{\text{du}} \| \| \tilde{r}_{\text{pr}}^{k+1} \| + \| \hat{x}_{\text{du}} \| \| r_{\text{pr}}^{k+1} - \tilde{r}_{\text{pr}}^{k+1} \|.
	\end{array}
	\label{proof: eqn 9}
	\end{equation}
\end{proof}
In order to remove the quantity $\| \tilde{r}_{\text{pr}}^{k+1}\|$ from the error bound to avoid computing the true solution, we propose the following error indicator. Instead of applying the upper bound $rb_2 \coloneqq \| r_{\text{pr}}^{k+1}\| + \|\tilde{r}_{\text{pr}}^{k+1} \|$ to $nr \coloneqq \|r_{\text{pr}}^{k+1} - \tilde{r}_{\text{pr}}^{k+1}\|$ in \cref{proof: eqn 9},
we propose to use $rb_1 \coloneqq \left | \| r_{\text{pr}}^{k+1}\| - \|\tilde{r}_{\text{pr}}^{k+1} \| \right |$ to approximate $nr$, 
i.e.
$$\|r_{\text{pr}}^{k+1} - \tilde{r}_{\text{pr}}^{k+1}\|\approx \left | \| r_{\text{pr}}^{k+1}\|- \|\tilde{r}_{\text{pr}}^{k+1} \| \right |.$$
This is motivated by the fact that, firstly, applying the upper bound $rb_2$ will result in an upper bound  $(1+\bar \rho) \|\hat x_{\textrm{du}}\|\| r_{\text{pr}}^{k+1}\|$ for $\| \hat{x}_{\text{du}} \| \| r_{\text{pr}}^{k+1} - \tilde{r}_{\text{pr}}^{k+1} \|$ in \cref{proof: eqn 9}, which is even larger than $\bar \Delta_2$ in \cref{eq:error_est}. Here we have used the relation in \cref{eq:rho}, \cref{rho_est}. Secondly, we find that
\begin{equation}
|rb_1-nr| \leq  \left \{
\begin{array}{ll}
2\rho^{k+1} \| r_{\text{pr}}^{k+1}\|, &  \rho^{k+1}>1\\
2 \| r_{\text{pr}}^{k+1}\|,  &    \rho^{k+1} \leq 1,
\end{array}\right.
\end{equation}
whereas,
$$|rb_2-nr| \leq [2+2\rho^{k+1}]\| r_{\text{pr}}^{k+1}\|.$$
This shows that $|rb_1-nr|$ poses a smaller upper bound than $|rb_2-nr|$, implicating that $nr$ could be better approximated by $rb_1$ than by $rb_2$. Therefore, we have the following error estimation
\begin{equation}
\begin{aligned}
| y^{k+1} - \bar{y}_{r}^{k+1} | &\lesssim  \| \tilde{E}^{-1} \| \| r_{\text{du}} \| \| \tilde{r}_{\text{pr}}^{k+1} \| + \| \hat{x}_{\text{du}} \|\left |\| r_{\text{pr}}^{k+1}\| - \|\tilde{r}_{\text{pr}}^{k+1} \| \right|  \\
&=  \rho^{k+1}  \| \tilde{E}^{-1} \| \| r_{\text{du}} \| \| r_{\text{pr}}^{k+1} \| + | 1 - \rho^{k+1} | \| \hat{x}_{\text{du}} \| \| r_{\text{pr}}^{k+1} \|  \\
&= \Big( \rho^{k+1}\| \tilde{E}^{-1} \| \| r_{\text{du}} \| + | 1 - \rho^{k+1} | \| \hat{x}_{\text{du}} \| \Big) \| r_{\text{pr}}^{k+1} \|\\
&\approx \Big( \bar \rho\| \tilde{E}^{-1} \| \| r_{\text{du}} \| + | 1 - \bar \rho | \| \hat{x}_{\text{du}} \| \Big) \| r_{\text{pr}}^{k+1} \|,
\end{aligned}
\end{equation}
where the relation in \cref{eq:rho} is used to remove $\|\tilde r_{\text{pr}}^{k+1} \|$ from the error estimation. Similarly as for the original error indicator, $\rho^{k+1}$ is approximated with the mean value $\bar{\rho}$ in \cref{rho_est}. We then define $\bar{\Psi} \coloneqq \Big( \bar \rho\| \tilde{E}^{-1} \| \| r_{\text{du}} \| + | 1 - \bar \rho | \| \hat{x}_{\text{du}} \| \Big)$ to get the following error indicator:
\begin{equation}
\label{mod_err_est}
| y^{k+1} - \bar{y}_{r}^{k+1} | \lesssim \bar{\Psi} \| r_{\text{pr}}^{k+1} \|.	
\end{equation}
Note that $\bar \Delta_2$ for the error indicator \cref{eq:error_est} now becomes $| 1 - \bar \rho | \| \hat{x}_{\text{du}} \| \| r_{\text{pr}}^{k+1} \|$ in \cref{mod_err_est}. Although $\| \hat{x}_{\text{du}} \| \| r_{\text{pr}}^{k+1} \|$ remains unchanged, the coefficient changes from $\bar\rho $
to $| 1 - \bar \rho |$. As has been analyzed in \cref{rem:rho}, when the POD-Greedy algorithm converges, $\bar \rho $ tends to be 1, so that $| 1 - \bar \rho |$ goes to 0, leading to a bound with faster decay rate. 
\begin{remark}
	For systems with multiple outputs, we define the dual system as
	\begin{equation*}
	\tilde{E}^{T} x_{\text{du}} = -C_{i}^{T},
	\end{equation*}
	where $C_{i}$ represents the $i^{\text{th}}$ row of the output matrix $C \in \mathbb{R}^{N_{O} \times N}$. For error estimation, we consider $| y_{i} - (\bar{y}_{r})_{i} |$ for each element $i$ of $y$. Then we have,
	\[ \|y^{k+1} - \bar{y}_{r}^{k+1}\|_{\infty} = \max \limits_{i \in \{ 1, 2, \ldots, N_{O} \}} |y_{i}^{k+1} - (\bar{y}_{r}^{k+1})_{i}|. \]
	\label{rem:multop}
\end{remark}
\subsection{Improving the decay rate of $\bar \Delta_1$}

Recall that the error indicator consists of two parts $\bar \Delta_1$ and $\bar \Delta_2$, where $\bar \Delta_1=\rho^{k+1}\|\tilde{E}^{-1}\| \| r_{\text{du}}\| \|r_{\text{pr}}^{k+1}\|$. In~\cite{morZhaFLetal15}, $\| r_{\text{du}} \|$ in $\bar \Delta_1$ is obtained by reducing the dual system using the projection matrix $V$ for MOR of the primal system, leading to a slowly decaying $\| r_{\text{du}} \|$. To achieve faster decay, we propose to use more suitable methods to compute the approximate solution
$\hat x_{\text{du}}$ to the dual system, according to different problems. The motivation is based on the observation that $\hat x_{\text{du}}$ is {\it not} necessarily computed by reducing the dual system using MOR.  Any appropriate method which can give a good $\hat x_{\text{du}}$ should be applicable. 

\paragraph*{Parametric dual system.}
When the dual system has parameter dependence, instead of using the primal system projection matrix $V$ to reduce the dual system, we construct the dual reduced basis separately as considered in \cite{morGreP05,morRozHP08}. With the dual reduced basis, the dual system solution can be much better 
approximated than using the primal reduced basis. Consequently, the computed 
$\hat x_{\text{du}}$ will give a residual with smaller norm. Certainly, this will lead to higher computational cost.

\paragraph*{Non-parametric dual system.}
In case the dual system is non-parametric (this can happen when the matrix $\tilde{E}$ is constant), instead of reducing the dual system, we can use Krylov-space methods, e.g. \texttt{GMRES, MINRES} \cite{PaiS75,SaaS86}, to iteratively solve the linear dual system and $\hat x_{\text{du}}$ is just the approximate solution computed by those methods. While this is done only once, the computational cost is similar to computing one snapshot and we do not expect much extra computations. This approach leads to much smaller $\| r_{\text{du}} \|$ than using the primal reduced basis to reduce the dual system.

\subsection{Efficiently computing the inf-sup constant}
In \cref{eq:error_est}, \cref{mod_err_est}, the term $\|  \tilde{E} ^{-1} \|$ needs to be calculated.
In case of matrix spectral norm $\| \cdot \|_{2}$,
\[ \| \tilde{E}^{-1} \|_{2} = \sigma_{\max}(\tilde{E}^{-1}) = \dfrac{1}{\sigma_{\min}(\tilde{E})}. \]
$\sigma_{\min}(\tilde{E})$ is actually the so-called inf-sup constant commonly used in the RBM. 
Since the matrix $\tilde{E}$ is parameter-dependent, it can become expensive to evaluate the smallest singular value for each parameter in the training set $\Xi$, as a large-scale eigenvalue problem needs to be solved for every parameter. Some methods have been proposed to make this computationally efficient. A first attempt was through the Successive Constraints Method (SCM) and its improvement, proposed in \cite{morHuyetal10,morHuyetal07}. However, SCM often suffers from a very slow convergence \cite{morSirK16}. In this work, we make use of the radial basis interpolation method proposed in \cite{morManN15}. This approach avoids the slow convergence rates as seen from SCM, while reducing the computational costs drastically.
\paragraph*{Radial basis interpolation for the inf-sup constant.}
When considering parameter spaces of high dimensions, the radial basis functions are a good candidate for interpolation basis. To achieve the goal of interpolating the smallest singular values of the matrix $\tilde{E}, \, \forall \mu \in \Xi$, we start with an initial coarse training set of points $\varUpsilon \subset \Xi$. The large-scale eigenvalue problem is solved for the parameters in this coarse training set and a coarse radial basis interpolant is formed, following the procedure outlined in \cite{morManN15}. The coarse parameter set $\varUpsilon$ is then enriched in a greedy manner. At each step, a new parameter from $\Xi$ is chosen and added to $\varUpsilon$ in order to improve the radial basis interpolant. The new parameter is chosen as the one that maximizes a pre-defined \emph{criterion function}, $\mathfrak{C}$\ defined over $\Xi$. The criterion function is such that it promotes adding new points in locations with highly varying response and ensures positivity of the interpolant. It was originally proposed in \cite{MacA10} and further adapted in \cite{morManN15} for the purpose of interpolating the smallest singular values. At the end of each iteration, the relative error (in the $\mathcal{L}_{\infty}$ norm) between the current and the previous interpolations for all parameters in $\Xi$ is computed. This defines the termination condition. Such an adaptive procedure offsets the need for performing SVD computations on large-scale matrices at all the parameters in $\Xi$.

\section{Adaptivity}
\label{sec:adaptivity}

In this section, we propose an adaptive scheme for automatically generating the reduced basis and the interpolation basis. The scheme is based on another separation of the error indicator as detailed below. 

\subsection{Error estimation by considering interpolation}

In practice, while calculating the residual of the primal system, we approximate the nonlinear term using (D)EIM to avoid the full dimensional computation. Considering this aspect, the residual of the primal system from \cref{residues} is now expressed as
\begin{equation}
\begin{array}{rl}
r_{\text{pr}}^{k+1} &= \tilde{A} \hat{x}^{k} + \Delta t f(\hat{x}^{k}) + \Delta t \mathcal{I}[f(\hat{x}^{k})] - \Delta t \mathcal{I}[f(\hat{x}^{k})] + \Delta t B u^{k} - \tilde{E} \hat{x}^{k+1}\nonumber\\
&= \underbrace{\Big( \tilde{A} \hat{x}^{k} + \Delta t \mathcal{I}[f(\hat{x}^{k})] + \Delta t B u^{k} - \tilde{E} \hat{x}^{k+1} \Big)}_{\coloneqq r_{\text{pr},\mathcal{I}}^{k+1}} + \underbrace{\Big( (\Delta t f(\hat{x}^{k}) - \Delta t \mathcal{I}[f(\hat{x}^{k})]) \Big)}_{\coloneqq e_{\mathcal{I}}^{k}}. \nonumber
\label{pod_deim_split}
\end{array}
\end{equation}
Here, $\mathcal{I}[f(\cdot)]$, is the interpolation of the function $f(\cdot)$. By considering the separation of $r_{\text{pr}}^{k+1}$ in the above equation, the error indicator in \cref{eq:error_est} or \cref{mod_err_est} can be split into two contributions - one from approximating the state by reduced basis vectors (RB error) and the other from approximating the nonlinear term by interpolation ((D)EIM error), i.e.
\begin{equation}
\begin{aligned}
| y^{k+1} - y_{r}^{k+1} | &\lesssim \underbrace{\bar{\Phi} \| r_{\text{pr},\mathcal{I}}^{k+1} \| }_{\coloneqq \bar{\Delta}_{\text{RB}}^{k+1}}+\underbrace{\bar{\Phi} \| \Delta_{\mathcal{I}}^{k} \|}_{\coloneqq \bar{\Delta}_{\mathcal{I}}^{k}}, 
\end{aligned}
\label{error_split_phi}
\end{equation}
or
\begin{equation}
\begin{aligned}
| y^{k+1} - \bar {y}_{r}^{k+1} | &\lesssim \underbrace{\bar{\Psi} \| r_{\text{pr},\mathcal{I}}^{k+1} \|}_{\coloneqq \bar{\Delta}_{\text{RB}}^{k+1}} + \underbrace{\bar{\Psi} \| \Delta_{\mathcal{I}}^{k}\|}_{\coloneqq \bar{\Delta}_{\mathcal{I}} ^{k}}, \end{aligned}
\label{error_split_psi}
\end{equation}
where $\Delta_{\mathcal{I}}^{k}$ is an error indicator for the interpolation error $e_{\mathcal{I}}^{k}$, obtained based on a higher order (D)EIM approximation proposed in \cite{morBarMNetal04,morGreMetal07,morDroHO12,morWirSH14}. In particular, we have,
$\Delta_{\mathcal{I}}^{k} := \Pi^{\ell^{'}} \big( I - \Pi^{\ell} \big) f(\hat{x}^{k}).$ Here, $\Pi^{\ell^{'}} := (I - \Pi^{\ell}) U^{\ell^{'}} \bigg( (P^{\ell^{'}})^{T} \big( I - \Pi^{\ell} \big) U^{\ell^{'}} \bigg)^{-1} (P^{\ell^{'}})^{T} $ and $  \Pi^{\ell} := U^{\ell} \bigg( (P^{\ell})^{T} U^{\ell} \bigg)^{-1} (P^{\ell})^{T} $. The pair $(U^{\ell}, P^{\ell})$ corresponds to the (D)EIM basis and interpolation indices for an order-$\ell$ approximation of $f(\hat{x}^{k})$, whereas the pair $(U^{\ell^{'}}, P^{\ell^{'}})$ corresponds to those for an order-$\ell^{'}$ approximation, where $\ell^{'} > \ell$. For a detailed treatment, we refer to \cite{morWirSH14}.
\newline

For convenience of explanation, we use the same names $\bar{\Delta}_{\text{RB}}^{k+1}$, $\bar{\Delta}_{\mathcal{I}} ^{k}$ for the two parts of either the original error indicator or the modified one, though we will mainly focus on the modified error indicator in the later analysis.

\paragraph*{Mean error estimate.}

Note that the above error indicator measures the output error at each time step, which corresponds to many values. For the proposed adaptive scheme, we need a single value to measure the actual error.
Therefore, we define the mean error as below,
\begin{equation}
\begin{aligned}
\dfrac{1}{K} \sum_{i=1}^{K} \| y^{i} - \bar{y}_{r}^{i} \| &\leq \dfrac{1}{K} \sum_{i=1}^{K} \bigg(\,\bar{\Delta}_{\text{RB}}^{i} + \bar{\Delta}_{\mathcal{I}}^{i} \, \bigg) =  \bar{\Delta}_{\text{RB}} + \bar{\Delta}_{\mathcal{I}} =: \bar{\Delta}.
\end{aligned}
\label{error_mean}
\end{equation}
Here, $ i = 1, 2, \ldots, K$ indicates the time instances $t_{i}$ where the snapshots are taken, $\bar{\Delta}_{\text{RB}} =  \dfrac{1}{K} \sum_{i=1}^{K} \bar{\Delta}_{\text{RB}}^{i}$ and $\bar{\Delta}_{\mathcal{I}} =  \dfrac{1}{K} \sum_{i=1}^{K} \bar{\Delta}_{\mathcal{I}}^{i}$. With the mean output error indicator, we are ready to propose the adaptive scheme as below.

\subsection{Adaptively increasing/reducing the number of RB and EI basis vectors}

Consider a user defined tolerance $\texttt{tol}$ for the ROM. At every iteration, we check the relative changes of error indicators $\bar{\Delta}_{\text{RB}}$ and $\bar{\Delta}_{\mathcal{I}}$ in \cref{error_mean} w.r.t the error tolerance $\texttt{tol}$, respectively, i.e. 
\begin{equation}
\label{eq:ratios}
\frac{\bar{\Delta}_{\text{RB}}}{\texttt{tol}} \quad \text{and} 
\quad  \frac{\bar{\Delta}_{\mathcal{I}}}{\texttt{tol}}.
\end{equation}
It is easy to see that a value in \cref{eq:ratios} being bigger than 1 implicates the current basis is not accurate enough, and needs to be extended; otherwise, a value being smaller than 1 means that the current bases are accurate enough, and no new basis vectors need to be computed. If possible, some old basis vectors could be removed to make the reduced basis/interpolation basis space as compact as possible. Note that the actual values in \cref{eq:ratios} could vary from, e.g., $10^{-10}$ to $10^{10}$. In order to decide how many basis vectors need to be added/removed, we use the function $log$, which maps the actual values in \cref{eq:ratios} to some moderate values, typically falling into a subinterval of $[-10,10]$.
We then use the rounded $log$-mapped values as the indicators for basis enriching or shrinking. Let $\ell_{\text{RB}}, \ell_{\text{EI}}$ be the number of basis vectors in the RB, (D)EIM projection matrices at the current iteration, respectively.

For the ROM considered in \cref{rom} we update the RB, (D)EIM projection matrices at each iteration of the greedy algorithm based on the following rule,
\begin{equation}
\begin{aligned}
\Delta \ell_{\text{RB}} &\coloneqq \pm 1 + \Bigl \lfloor \log \big ( \frac{\bar{\Delta}_{\text{RB}}}{\textnormal{\texttt{tol}}} \big) \Bigr \rfloor,\\
\Delta \ell_{\text{EI}} &\coloneqq \pm 1 + \Bigl \lfloor \log \big ( \frac{\bar{\Delta}_{\mathcal{I}}}{\textnormal{\texttt{tol}}} \big) \Bigr \rfloor.\\
\end{aligned}
\label{adapt_eqn_first}
\end{equation}
Here, $\bar{\Delta}_{\text{RB}}$, $\bar{\Delta}_{\mathcal{I}}$ are defined as in \cref{error_mean}.
Based on the above update, the number of basis vectors for RB, (D)EIM basis enriching/shrinking is given as
\begin{equation}
\begin{aligned}
\ell_{\text{RB}} &= \ell_{\text{RB}} + \Delta \ell_{\text{RB}} ,\\ \ell_{\text{EI}} &= \ell_{\text{EI}} + \Delta \ell_{\text{EI}}.
\end{aligned}
\label{adapt_eqn_second}
\end{equation}
%
\begin{remark}
	Recall that a one-way definition of $\Delta \ell_{\text{RB}}, \Delta \ell_{\text{EI}}$ was proposed in~\cite{morFenMB17}, where basis shrinking was not considered. Finally, the adaptive algorithm must start from a small number of basis vectors to be able to continue. Whereas, both basis extension and shrinking are covered by \cref{adapt_eqn_first}. The value $\pm 1$ tries to ensure at least one basis vector is added (+1)/removed (-1), in case the rounded value  
	$p = \Bigl \lfloor \log \big ( \frac{\bar{\Delta}_{\text{RB}}}{\textnormal{\texttt{tol}}} \big) \Bigr \rfloor$ or  $d = \Bigl \lfloor \log \big ( \frac{\bar{\Delta}_{\mathcal{I}}}{\textnormal{\texttt{tol}}} \big) \Bigr \rfloor$ is zero, but the values $ \log \big ( \frac{\bar{\Delta}_{\text{RB}}}{\textnormal{\texttt{tol}}} \big)$ or  $\log \big ( \frac{\bar{\Delta}_{\mathcal{I}}}{\textnormal{\texttt{tol}}} \big)$ are nonzero (either slightly smaller than zero or slightly larger than zero).
	\label{rem:nonadapt_trivial_update}
\end{remark}
\paragraph{Dealing with stagnation in adaptivity.}
The goal for adaptivity is to add/delete basis vectors to/from the current RB, EI bases so that the ROM meets the given tolerance while being kept as compact as possible. It is observed that in some cases, the convergence of the adaptive algorithm becomes slow when the estimated error is close to the tolerance. In such situations, the number of additional basis vectors to be added/deleted is usually one, resulting in a slow convergence. A second issue is that the error indicator could keep oscillating (below and above the error tolerance) upon basis enriching/shrinking. To avoid the above phenomena, 
we propose to define a \emph{zone-of-acceptance} (\texttt{zoa}) for the output error.
In particular, we set a new value $\epsilon^* < \texttt{tol}$. $\epsilon^*$ and $\texttt{tol}$ then define a \emph{zone-of-acceptance}: $[\epsilon^*, \texttt{tol}]$. 
Whenever the estimated error falls into $[\epsilon^*, \texttt{tol}]$, the algorithm will terminate. We typically take $\epsilon^*=0.1\,\texttt{tol}$. 
\paragraph*{(D)EIM plateaus.}
Previous works dealing with the simultaneous enrichment of RB and (D)EIM bases have noted the issue of (D)EIM plateaus. Actually, two different notions of plateauing have been observed and presented in \cite{morGreMetal07,morDroHO12, morCanTU09,morIapQR16,morSmeO17,morTon11}. In \cite{morGreMetal07,morSmeO17,morTon11}, the authors note that when the number of basis elements of the EIM approximation is fixed at some small value, an increase in the number of RB vectors does not result in an improvement in the overall error. This is a plateauing due to \emph{large errors in the EIM approximation}.
However, in \cite{morDroHO12}, it is observed that EIM plateaus occur when the error contribution is dominated by the RB approximation error and a further enrichment of the EIM is useless. This is a plateauing due to \emph{large errors in the RB approximation}.
These observations suggest that simultaneous enrichment of the RB and the (D)EIM basis is critical to avoid plateaus in general. A solution proposed in \cite{morDroHO12} is to monitor an error estimator over the training set, i.e., $\max \limits_{\mu \in \Xi} \bar{\Delta}(\mu)$, between two successive iterations. If a newly added RB basis vector leads to an increase in the error, then it is dropped and only the EIM basis is updated. In \cite{morTon11}, the author considers different tolerances for the RB, EIM approximations. However, the proposed algorithm involves some user-defined constants which makes it less straightforward to implement. From our experience in the numerical tests, setting different tolerances for the RB and EIM approximation, with $\texttt{tol}_{\text{EI}} < \texttt{tol}_{\text{RB}}$ proves to give the best approximation. A similar observation is also noted in \cite{morSmeO17}, however no simultaneous enrichment is considered there. Still, it is not entirely clear how small the (D)EIM approximation tolerance has to be when compared to the RB tolerance. In our numerical experiments, we use $\texttt{tol}_{\text{EI}} = 0.01 \times \texttt{tol}_{\text{RB}}$.
\newline

In the following, we first extend the one-way adaptive algorithm in~\cite{morFenMB17} for non-parametric systems to a two-way algorithm, then we propose the adaptive POD-Greedy-(D)EIM algorithm for parametric systems.
\subsubsection{Adaptive algorithm for non-parametric systems}
In this section, we extend the one-way adaptive scheme in~\cite{morFenMB17} to a two-way process. This means we have the flexibility of starting our algorithm given any possible number of initial basis vectors. The method is able to suggest the proper number of basis vectors to be added to or removed from the current basis, and  yields a compact and stable ROM, for the given tolerance. We detail this in \cref{alg_twoway_adap}. The constants $p_{0}, d_{0}$ in \cref{alg_twoway_adap} act as $\pm 1$ in Equation \cref{adapt_eqn_second}. See \cref{rem:nonadapt_trivial_update}.
\renewcommand{\algorithmicrequire}{\textbf{Input:}}
\renewcommand{\algorithmicensure}{\textbf{Output:}}	
\begin{algorithm}[t!]
	\begin{algorithmic}[1]
		\caption{Adaptive POD-(D)EIM (two-way)}
		\label{alg_twoway_adap}
		\Require POD, (D)EIM projection basis and interpolation points from \cref{alg_pod,,alg_deim}, respectively:
		
		\noindent $V \in \mathbb{R}^{N \times \ell_{\text{RB}}}$, $U_{f} \in \mathbb{R}^{N \times \ell_{\text{EI}}}$, Index matrix $P = [e_{\wp_{1}}, e_{\wp_{2}},  \ldots, e_{\wp_{\ell}}]$, initial choice of basis $\ell_{\text{RB}}^{0}, \ell_{\text{EI}}^{0}$, Tolerance \texttt{tol}, \texttt{zoa}.
		
		\Ensure ROM with updated size ($\ell_{\text{RB}}^{*},\ell_{\text{EI}}^{*}$) .
		
		\State Form $V^{*} \coloneqq V(: \, ,\, 1:\ell_{\text{RB}}^{0}), \, U_{f}^{*} \coloneqq U_{f}(: \, ,\, 1:\ell_{\text{EI}}^{0}), \, P^{*} \coloneqq P(: \, ,\,1:\ell_{\text{EI}}^{0})$.
		
		\State Determine the dual system solution $\hat{x}_{\text{du}}$, using \texttt{GMRES}.
		
		\State Simulate the ROM constructed using $V^{*}, U_{f}^{*}, P^{*}$ and compute the error $\bar{\Delta} \coloneqq \bar{\Delta}_{\text{RB}} + \bar{\Delta}_{\mathcal{I}}$.
		
		\While{$\bar{\Delta} \notin$ \texttt{zoa}}
		\State Determine p = $\Bigl \lfloor \log \big ( \frac{\bar{\Delta}_{\text{RB}}}{\texttt{tol}} \big) \Bigr \rfloor$,
		d = $\Bigl \lfloor \log \big ( \frac{\bar{\Delta}_{\mathcal{I}}}{\texttt{tol}} \big) \Bigr \rfloor$.
		\State Trivial updates $p_{0}, d_{0} = \pm1$, in case $p$ or $d$ = 0, to ensure at least one basis vector is added/removed.
		\State $\text{PODinc} = p_{0} + p$	 ,  $\text{(D)EIMinc} = d_{0} + d$.
		\State $\ell_{\text{RB}}^{*} = \ell_{\text{RB}}^{*} + \text{PODinc}$.
		\State $\ell_{\text{EI}}^{*} = \ell_{\text{EI}}^{*} + \text{(D)EIMinc}$.
		\State Ensure $\ell_{\text{EI}}^{*} > \ell_{\text{RB}}^{*}$, for stability reasons.
		\State Update projection matrices,  $V^{*} \coloneqq V(: \, ,\, 1 : \ell_{\text{RB}}^{*}), \, U_{f}^{*} \coloneqq U_{f}(: \, ,\, 1 : \ell_{\text{EI}}^{*}), \, P^{*} \coloneqq P(: \, ,\,1 : \ell_{\text{EI}}^{*})$.	 
		\State Simulate the updated ROM constructed using $V^{*}, U_{f}^{*}, P^{*}$ and compute the updated error $\bar{\Delta} = \bar{\Delta}_{\text{RB}} + \bar{\Delta}_{\mathcal{I}}$.
		\EndWhile
	\end{algorithmic}
\end{algorithm}

\begin{remark}
	In our numerical experiments, we noticed that in some cases, when $\ell_{\text{EI}}^{*} < \ell_{\text{RB}}^{*}$, the ROM is no longer stable.  Therefore, in Step 10 of \cref{alg_twoway_adap} we set the number of EI basis vectors ($\ell_{\text{EI}}^{*}$) to be larger than that of the RB basis vectors ($\ell_{\text{RB}}^{*}$).
\end{remark}
\begin{remark} 
	Note that the inputs of \cref{alg_twoway_adap} are the POD and DEIM basis computed by standard algorithms, where both bases are conservatively computed to guarantee accuracy of the ROM. 
	The adaptive algorithm chooses proper basis vectors from the POD and DEIM basis, respectively, in order to construct a more compact and stable ROM. One starts from an initial choice for the POD and DEIM basis combination. Both basis vectors are then iteratively updated, based on the output error indicator.
\end{remark}
\subsubsection{Adaptive algorithm for parametric systems}
\label{sec:adappodgreedy}
For standard implementation of the POD-Greedy algorithm combined with interpolation of the nonlinear part, the (D)EIM basis and interpolation indices are usually pre-computed outside of the greedy loop. This separate basis generation often leads to a less compact ROM. In addition, (D)EIM needs FOM simulations at all samples in a training set, which is time consuming especially for problems needing many time steps for one simulation, e.g., the batch chromatographic model presented in~\cref{sec:results}. In~\cite{morDroHO12}, POD-Greedy and EIM are implemented such that at each iteration of the greedy algorithm, 
a single basis vector is added to the current reduced bases and the interpolation bases, respectively. The most recent work that addresses this issue can be found in~\cite{morBenEEetal18, morDavP15}. As has been discussed in the introduction, there are major differences between those existing algorithms and our proposed algorithm.\newline
\indent
\cref{alg_adap_podgreedy} is the proposed adaptive POD-Greedy-(D)EIM algorithm. \cref{alg_update_ei,,alg_update_vdu,,alg_adap_basis_upd} are the supporting functions needed. The basic idea of \cref{alg_adap_podgreedy} is that starting from the first selected parameter $\mu^*$, the RB basis and the (D)EIM basis are updated simultaneously but not trivially. The number of the RB and (D)EIM basis vectors $\ell_{\text{RB}}$ and $\ell_{\text{EI}}$ are determined by \cref{adapt_eqn_second}, and updated in the subroutine \texttt{Adapt\_basis\_update}. The reduced basis matrix $V$ is updated in Step 5 (or Step 9) of \cref{alg_adap_podgreedy}, where instead of using $\ell_{\text{RB}}$ = 1 as in the standard case, $\ell_{\text{RB}}$ is adaptively computed by \texttt{Adapt\_basis\_update}. The interpolation basis matrix $U_{\text{EI}}$ is updated by the subroutine \texttt{Update\_EI}, which essentially implements the interpolation algorithm DEIM or EIM. The dual system is also reduced by the reduced basis method implemented in the subroutine~\texttt{Update\_$V_{\text{du}}$}. It is clear that FOM simulations are only needed for those selected parameters $\mu^*$, not for all parameters in the training set $\Xi$. \cref{alg_adap_basis_upd} determines the number of RB and (D)EIM basis vectors $(\ell_{\text{RB}}, \ell_{\text{EI}})$, that will be added/removed at each iteration. 
\renewcommand{\algorithmicrequire}{\textbf{Input:}}
\renewcommand{\algorithmicensure}{\textbf{Output:}}	
\begin{algorithm}[t!]
	\begin{algorithmic}[1]
		\caption{Adaptive POD-Greedy-(D)EIM}
		\label{alg_adap_podgreedy}		
		\Require Parameter training set $\Xi \subset \mathscr{P}$, Tolerance $\texttt{tol}$, $\epsilon_{\text{POD}}$ (or $\epsilon_{\text{EI}}$).
		\Ensure RB Basis $V \in \mathbb{R}^{N \times \ell_{\text{RB}}}$, (D)EIM basis and interpolation points $U_{\text{EI}}, P_{\text{EI}} \in \mathbb{R}^{N \times \ell_{\text{EI}}}$.		
		\State In case of non-parametric dual system, precompute the approximate solution to the dual system ($\hat{x}_{\text{du}}$), using \texttt{GMRES}.
		\State Initialize, $V = [\,]$, $V_{\text{du}} = [\,]$, $U_{\text{EI}} = [\,]$, $P_{\text{EI}} = [\,]$, $\ell_{\text{RB}}$ = 1, $\ell_{\text{EI}}$ = 1. Initial $\mu^{*}$: a random parameter from $\Xi$.
		\While {$\bar{\Delta}(\mu^{*})$ $\notin$ \texttt{zoa}}
		\If{$\ell_{\text{RB}} < 0$}
			\State Remove the last $\ell_{\text{RB}}$ columns from $V$.
		\Else
			\State Compute FOM at $\mu^{*}$ and obtain snapshots, X = $[x(t_{1}, \mu^{*}), x(t_{2}, \mu^{*}), \ldots, x(t_{K}, \mu^{*})]$.
			\State Update the projection matrix. $\bar{\text{X}} \coloneqq$ X - $\text{Proj}_{\mathcal{V}}$(X) $\xrightarrow{\text{SVD}}$ $U \Sigma W^{T}$, $\mathcal{V}$ is the subspace spanned by the columns of $V$. 
			\State $V \leftarrow{} \texttt{orth}\{ V, \, U(: \, , 1:\ell_{\text{RB}} )\}$.
		\EndIf
		\State \texttt{Update\_EI}.	
		\State \texttt{Update\_$V_{\text{du}}$}, in case of parametric dual system.
		\State $\mu^{*} \coloneqq$ arg $\max\limits_{\mu \in \Xi} \bar{\Delta}(\mu)$.
		\State \texttt{Adapt\_basis\_update}.
		\EndWhile
	\end{algorithmic}
\end{algorithm}
%
\begin{algorithm}[t!]
	\begin{algorithmic}[1]
		\caption{\texttt{Update\_EI}}
		\label{alg_update_ei}
		\Require Snapshots of the nonlinear vector at all parameters so far selected by the greedy algorithm, 
		
		\noindent $F = \big[ f(x(t_{1},\mu), \, \mu), \, f(x(t_{2},\mu), \, \mu), \, \ldots \,,
		f(x(t_{K},\mu), \, \mu) \big]$ for all selected $\mu$, $\ell_{\text{EI}}$, $\epsilon_{\text{POD}}$ (or $\epsilon_{\text{EI}}$).
		\Ensure (D)EIM basis and interpolation points $U_{\text{EI}}$, $P_{\text{EI}}$.		
		\State \texttt{max\_iter} = $\ell_{\text{EI}}$ for \cref{alg_eim} or set $\ell = \ell_{\text{EI}}$ in Step 6 of \cref{alg_deim} and ignore Step 3.	
		\State Call [$U_{\text{EI}}$, $P_{\text{EI}}$] = \texttt{EIM}($F$,  \texttt{max\_iter}, $\epsilon_{\text{EI}}$) or 
		[$U_{\text{EI}}$, $P_{\text{EI}}$] = \texttt{DEIM}($F$, $\epsilon_{\text{POD}}$).
	\end{algorithmic}    
\end{algorithm}
\begin{algorithm}[t!]
	\begin{algorithmic}[1]
		\caption{\texttt{Update\_$V_{\text{du}}$}}
		\label{alg_update_vdu} 
		\Require $V_{\text{du}}$, $\mu^{*}_{\text{du}}$, \texttt{tol}.
		\Ensure Updated dual projection matrix $V_{\text{du}}$.
		\If{$\bar{\Delta}_{\text{du}}(\mu_{\text{du}}^{*}) >$  \texttt{tol}}
		\State Solve full order dual system for chosen parameter $\mu^{*}_{\text{du}}$ and obtain $x_{\text{du}}(\mu^{*}_\text{du})$.
		\State Update $V_{\text{du}}$, $V_{\text{du}} \coloneqq \texttt{orth}\{ V_{\text{du}} \,, x_{\text{du}}(\mu^{*}_\text{du})\}$.
		\State
		$\mu^{*}_{\text{du}} \coloneqq$ $\arg \max\limits_{\mu \in \Xi} \bar{\Delta}_{\text{du}}(\mu)$.
		\EndIf{}       
	\end{algorithmic}   
\end{algorithm}
\begin{algorithm}[t!]
	\begin{algorithmic}[1]
		\caption{\texttt{Adapt\_basis\_update}}
		\label{alg_adap_basis_upd}
		\Require $l_{\text{RB}}$, $l_{\text{EI}}$, $\bar{\Delta}_{\text{RB}}(\mu^{*})$, $\bar{\Delta}_{\mathcal{I}}(\mu^{*})$, $\texttt{tol}$.
		\Ensure Updated $\ell_{\text{RB}}$, $\ell_{\text{EI}}$.
		\State $p = \Bigl \lfloor \log \big ( \frac{\bar{\Delta}_{\text{RB}}(\mu^{*})}{\texttt{tol}} \big) \Bigr \rfloor $, $d = \Bigl \lfloor \log \big ( \frac{\bar{\Delta}_{\mathcal{I}}(\mu^{*})}{\texttt{tol}} \big) \Bigr \rfloor$.
		\State In case $p=0$ or $d=0$, enforce trivial update, $p_{0} = \pm1$ or $d_{0} = \pm1$.
		\State $\ell_{\text{RB}} = p_{0} + p$.
		\State $\ell_{\text{EI}} = \ell_{\text{EI}} + d_{0} + d$.
		\State Ensure $\ell_{\text{EI}} > (\text{rank}(V) + \ell_{\text{RB}})$, for stability reasons.
	\end{algorithmic}  
\end{algorithm}
\begin{remark}
	$\texttt{orth}\{ V, \, U(: \, , 1:\ell_{\text{RB}} )\}$ in Step 9 of \cref{alg_adap_podgreedy} is an orthogonalization step for the updated RB matrix $V$. Using a modified Gram-Schmidt (MGS) procedure is recommended, where the last $p$ columns in $V$ represent the most recently added basis vectors in the previous iteration. In case of $l_{\text{RB}}<0$, it allows direct removal of $p$ columns from $V$, without 
	influencing the previous basis vectors. The most recent computed basis vectors should be removed, if the error indicator indicates basis shrinking. This is easy to understand, since the error indicator monitors the error of the current ROM when new basis vectors are added to the basis space. If it is already smaller than the tolerance, it means those newly added basis vectors are not necessary and can be removed.
\end{remark}
\begin{remark}
	In practice, we can make use of separate tolerances for the RB, (D)EIM error in \cref{alg_adap_basis_upd}, i.e. Step 1 can be modified as, $p = \Bigl \lfloor \log \big ( \frac{\bar{\Delta}_{\text{RB}}(\mu^{*})}{\textnormal{\texttt{tol}}_{\texttt{RB}}} \big) \Bigr \rfloor $, $d = \Bigl \lfloor \log \big ( \frac{\bar{\Delta}_{\mathcal{I}}(\mu^{*})}{\textnormal{\texttt{tol}}_{\texttt{EI}}} \big) \Bigr \rfloor$. The tolerance for (D)EIM approximation is usually set a little lower than that for the RB approximation. This follows from the observation that the nonlinear vector needs to be sufficiently well-approximated to enable a good state approximation.
\end{remark}

\section{Numerical Results}
\label{sec:results}
In this section we test the proposed adaptive algorithm on three examples. All the examples we consider can be represented in the general form of \cref{fom}. We use the same non-parametric model as in~\cite{morFenMB17} to test the extended two-way adaptive \cref{alg_twoway_adap}. \cref{alg_adap_podgreedy} is tested 
for the other two parametric models. 
\subsection{A non-parametric example}
The model Fluidized Bed Crystallizer (FBC) is from the field of chemical engineering. Enantiomers are molecules that have the same physical, chemical properties but occur as mirror images of one another. Due to their similar properties, separation of the two components is not easily achieved using simple techniques, but requires sophisticated methods such as adsorption, crystallization, etc. For a more in-depth treatment, the reader is referred to \cite{morManFKetal15}. 
\begin{figure}[t]
	\centering
	\includegraphics[scale=0.3]{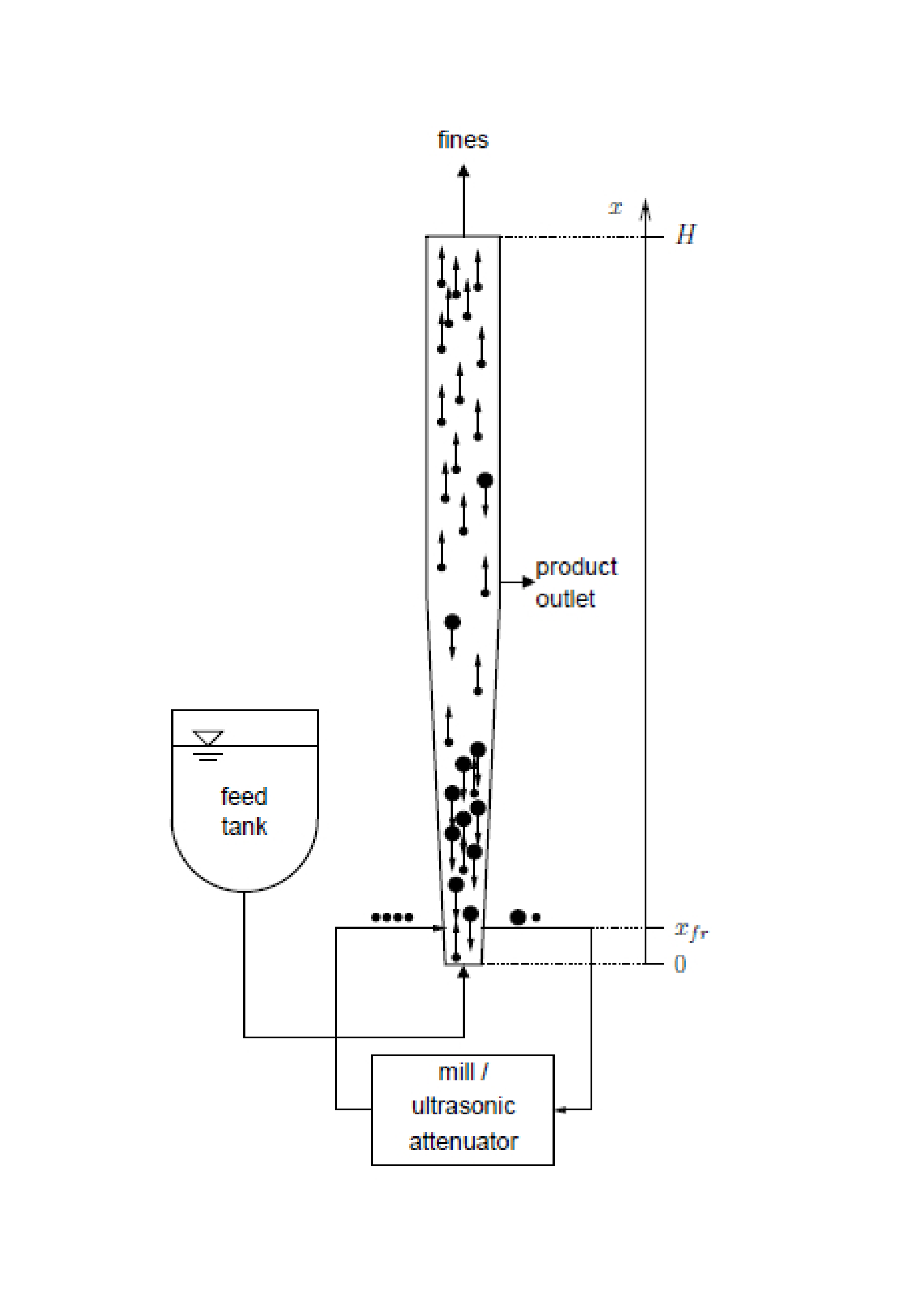}
	\caption{Model of the Fluidized bed crystallizer \cite{morFenMB17}.}
	\label{fig:crystallizer}
\end{figure}
\cref{fig:crystallizer} shows a FBC, which is a long cylindrical column, with the walls tapering inwards as one approaches the bottom. The chemical mixture that has the two enantiomers dissolved in it (called racemate) is injected from the bottom. Some seed crystals need to be introduced into the crystallizer before it begins operation. Seed crystals are essentially the pure crystals of the enantiomer we want to isolate. The seeds are necessary for triggering the precipitation of the crystals in the racemate. During its operation, the smaller crystals move to the top of the crystallizer along with the fluid flow. Bigger crystals sink to the bottom from where they are collected and sent to a crushing device (such as an ultrasonic attentuator) to be crushed to an appropriate size and reintroduced as seed crystals. The crystallization process is governed by a set of conservation formulas, called the population balance equations which are PDEs. The PDE governing the FBC is given as:
\begin{align}
\begin{autobreak}
A_{c}(x) \frac{\partial n}{\partial t} = 
-\frac{\partial}{\partial x} \big( A_{c}(x) v_{p} (x, L, t) n(x,L,t) \big) 
+ D \frac{\partial}{\partial x} \Big( A_{c}(x) \frac{\partial n}{\partial x} \Big) 					
-A_{c}(x) G \frac{\partial n}{\partial L} 												
+ \dot{V}_{\text{us}} \Bigg( n_{\text{us}}(L) \frac{\int_{0}^{\infty} nl^{3} dl}{\int_{0}^{\infty} n_{us}l^{3}dl} - n(x, L, t) \Bigg) \hat{\delta} (x - x_{\text{us}}),
\end{autobreak}
\label{crystallizer}
\end{align}
where
\begin{itemize}
	\item $x$ denotes the spatial coordinate, $L$ denotes the particle size coordinate,
	\item $A_{c}(\cdot)$ is the area of cross-section, $n$ is the number size density, i.e., the number of particles per volume of size $L$ at coordinate $x$ at time $t$,
	\item $v_{p}$ is the plug flow velocity,
	\item $D$ is the dispersion constant, $G$ is the crystal growth factor,
	\item $\dot{V}_{\text{us}}$ is the volume flow to/from the attenuator,
	\item $n_{\text{us}}(L)$ is a constant describing the distribution of the crystals coming from the attenuator.
\end{itemize}
\begin{table}[t!]
	\caption{Simulation data for the FBC.}
	\vspace{-5mm}	
	\begin{center}
		\begin{tabular}{|c|c|c|c|c|}
			\hline
			Interpolation & $N$ & $[0, T]$ (s)  & tolerance & $\epsilon_{\text{POD}}$\\
			\hline
			DEIM & 18400 & [0, 500] & $10^{-4}$ & $10^{-10}$\\
			\hline
		\end{tabular}				
	\end{center}
	\label{tab:cystallizer_param}
\end{table}
Equation \cref{crystallizer} can be discretized and rewritten into a form as in \cref{disc_primalfom}. \cref{tab:cystallizer_param} gives the model parameters that we consider for the full order simulation. After discretization in space using the finite volume method, the original system is of size $N=18400$. For the time variable, we consider a semi-implicit Euler discretization. We use \cref{alg_twoway_adap}, where DEIM is used to compute the interpolation basis. The tolerance for the ROM is set as $\texttt{tol}=10^{-4}$. The model of the crystallizer involves a very long time to reach a cyclic state, usually 5000 seconds. However, for snapshot generation, we need only the transient portion and the first cycle of the steady state since the latter cycles behave very similarly. As a result we only need to simulate the FOM till 500 seconds for snapshot generation. \cref{alg_twoway_adap} has been used to implement the two-way adaptivity scheme. We make use of \texttt{GMRES} to solve the associated dual system. It is implemented via the MATLAB\textsuperscript{$\circledR$} function \texttt{gmres}. Moreover, we use the Incomplete LU (ILU) factorization with a drop tolerance of $10^{-3}$ as a preconditioner. The \texttt{GMRES} tolerance is set to be $10^{-6}$.

To test \cref{alg_twoway_adap}, we consider two cases. We denote the first case as \textsf{\sc{Increase}}. It involves starting from a small initial choice of RB and DEIM basis dimension, and iteratively adding new basis vectors to both. In the second case, denoted as \textsf{\sc{Decrease}}, we initialize \cref{alg_twoway_adap} with a larger number of initial basis vectors and adaptively remove basis vectors, till the ROM reaches the defined tolerance band $\texttt{zoa}$. \cref{fig:fbc_convergence} shows the adaptive generation of POD and DEIM basis starting from small initial numbers of $(\ell_{\text{RB}}, \ell_{\text{EI}}) = (3,8)$. The error indicator is below the tolerance after 9 iterations, showing that \cref{alg_twoway_adap} terminates. 
The adaptive process results in a final ROM of $(\ell_{\text{RB}}, \ell_{\text{EI}}) = (16, 20)$ basis vectors. For the ROM without adaptivity, we set the SVD tolerance $\epsilon_{\text{POD}} = 10^{-10}$ in \cref{alg_pod,alg_deim} in order to obtain a ROM below the error tolerance $\texttt{tol} = 10^{-4}$. This results in a ROM with $(\ell_{\text{RB}}, \ell_{\text{EI}})=(60,61)$.
Moreover, simply using the Standard POD algorithm and setting $\epsilon_{\text{POD}}=10^{-4}$ in \cref{alg_pod} does not guarantee that the output error of the ROM is also below the tolerance $\texttt{tol} = 10^{-4}$.

In \cref{fig:inc-landscape}, we show the error landscape obtained by plotting the logarithm of the mean error ($\bar{\Delta}$) corresponding to different combinations of $(\ell_{\text{RB}}, \ell_{\text{EI}})$. On the landscape, we mark the trajectory of $\bar{\Delta}$ adaptively selected by \cref{alg_twoway_adap}. We can see that, for several combinations of $(\ell_{\text{RB}}, \ell_{\text{EI}})$ not present in the adaptive trajectory, the resulting ROMs were unstable. For ease of visualization, the mean error of those $(\ell_{\text{RB}}, \ell_{\text{EI}})$ combinations resulting in unstable ROMs were set to be $1$ in the log-scale. The figure clearly illustrates how the algorithm converges to the minimum in the landscape, while avoiding the combinations resulting in instabilities. Further, one can identify the plateaus in the error, whenever the RB approximation is too poor. An additional observation deserving attention is that the instabilities mainly occur at $(\ell_{\text{RB}}, \ell_{\text{EI}})$ combinations with $\ell_{\text{EI}} < \ell_{\text{RB}}$.

For a pair of big initial values: $(\ell_{\text{RB}}, \ell_{\text{EI}})=(31, 39)$, the iterations of \cref{alg_twoway_adap} are shown in~\cref{fig:fbc_convergence_decrease}. In the beginning, the error indicator is below $10^{-5}$, indicating that the ROM is very accurate and there is possibility to further reduce the size of the ROM. After 7 iterations, the reduced basis vectors from POD as well as the DEIM basis vectors are adaptively adjusted to
$(\ell_{\text{RB}}, \ell_{\text{EI}})=(17,28)$. These results have been summarised in \cref{tab:cystallizer_results}. In \cref{fig:fbc_convergence,,fig:fbc_convergence_decrease}, the true error is the mean error defined by the left hand side of \cref{error_mean} and the corresponding error indicator is defined by the right hand side of the same inequality.
%
\begin{table}[t!]
	\caption{Simulation results for the FBC example.}
	\vspace{-5mm}	
	\begin{center}
		\begin{tabular}{|l|cccc|c|}
			\hline
			\multirow{2}{*}{Process}             & \multicolumn{2}{c|}{Initial}                                    & \multicolumn{2}{c|}{Final}                                      & \multirow{2}{*}{Iterations}                \\ \cline{2-5}
			& \multicolumn{1}{l}{$\ell_{\text{RB}}$} & \multicolumn{1}{l|}{$\ell_{\text{EI}}$} & \multicolumn{1}{l}{$\ell_{\text{RB}}$} & \multicolumn{1}{l|}{$\ell_{\text{EI}}$} &                                                                        \\ \hline
			\multicolumn{1}{|c|}{\textsf{\sc{Increase}}} & 3                              & \multicolumn{1}{c|}{8}                              & 16                             & 20                             & 9                            
			\\ \cline{1-6}
			\multicolumn{1}{|c|}{\textsf{\sc{Decrease}}} & 31                             & \multicolumn{1}{c|}{39}                             & 17                             & 28                             & 7                                                                        \\ \hline
		\end{tabular}
	\end{center}
	\label{tab:cystallizer_results}
\end{table}	
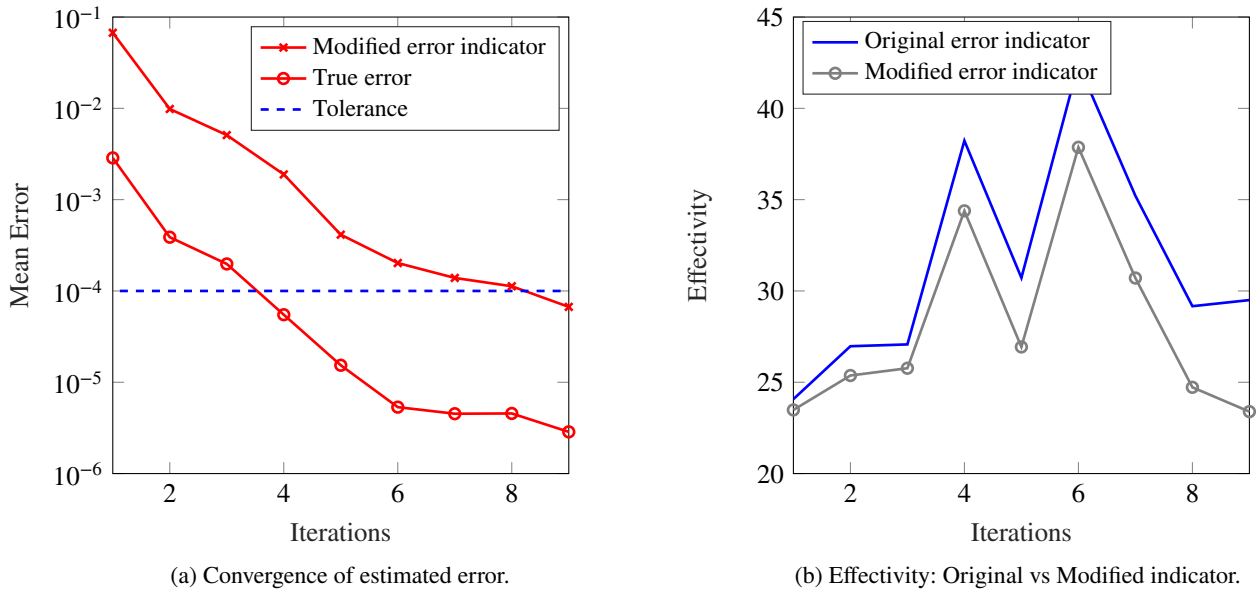
\begin{figure}[t!]
	\captionsetup[subfigure]{font=scriptsize,labelfont=scriptsize}
	\centering
	\newlength\fheight
	\newlength\fwidth
	\setlength\fheight{6cm}
	\setlength\fwidth{6cm}	
	\begin{subfigure}[b]{0.5\textwidth}
%
%
\definecolor{mycolor1}{rgb}{0.00000,0.44700,0.74100}%
\definecolor{mycolor2}{rgb}{0.85000,0.32500,0.09800}%
\begin{tikzpicture}

\begin{axis}[%
width=\fwidth,
height=\fheight,
at={(0\fwidth,0\fheight)},
scale only axis,
xmin=1,
xmax=9,
xlabel style={font=\color{white!15!black}},
xlabel={Iterations},
ymode=log,
ymin=1e-06,
ymax=0.1,
yminorticks=false,
ylabel style={font=\color{white!15!black}},
ylabel={Mean Error},
axis background/.style={fill=white},
legend style={legend cell align=left, align=left, draw=white!15!black, font = \scriptsize}
]
\addplot [color=red, line width=1.0pt,mark=x]
  table[row sep=crcr]{%
1	0.0672673706292057\\
2	0.00984153527766395\\
3	0.00509244215053889\\
4	0.00188866010877315\\
5	0.000413481247617018\\
6	0.000202197804525145\\
7	0.000138918137168987\\
8	0.000112586954503399\\
9	6.69313580318424e-05\\
};
\addlegendentry{Modified error indicator}

\addplot [color=red, line width=1.0pt,mark=o, mark options={solid, red}]
  table[row sep=crcr]{%
1	0.00286443108421125\\
2	0.000387933986807301\\
3	0.000197623447045129\\
4	5.49249487649395e-05\\
5	1.53518088931506e-05\\
6	5.33976479576669e-06\\
7	4.52387570457589e-06\\
8	4.55383111437439e-06\\
9	2.86136699108319e-06\\
};
\addlegendentry{True error}
\addplot [color=blue, dashed, line width=1.0pt]
table[row sep=crcr]{%
	0	1e-04\\
	1	1e-04\\
	2	1e-04\\
	3	1e-04\\
	4	1e-04\\
	5	1e-04\\
	6	1e-04\\
	7	1e-04\\
	8	1e-04\\
	9	1e-04\\
};
\addlegendentry{Tolerance}
\end{axis}
\end{tikzpicture}%
		\caption{Convergence of estimated error.}
		\label{fig:fbc_convergence}
	\end{subfigure}%
	\begin{subfigure}[b]{0.5\textwidth}		
%
%
\definecolor{mycolor1}{rgb}{0.00000,0.44700,0.74100}%
\definecolor{mycolor2}{rgb}{0.85000,0.32500,0.09800}%
\begin{tikzpicture}

\begin{axis}[%
width=\fwidth,
height=\fheight,
at={(0\fwidth,0\fheight)},
scale only axis,
xmin=1,
xmax=9,
xlabel style={font=\color{white!15!black}},
xlabel={Iterations},
ymin=20,
ymax=45,
ylabel style={font=\color{white!15!black}},
ylabel={Effectivity},
axis background/.style={fill=white},
legend style={at={(0.02,0.83)}, anchor=south west, legend cell align=left, align=left, draw=white!15!black, font = \scriptsize}
]
\addplot [color=blue, line width=1.0pt]
  table[row sep=crcr]{%
1	24.0751554791002\\
2	26.9726754149636\\
3	27.0716543308674\\
4	38.2282462906597\\
5	30.7198303918254\\
6	42.4243246878178\\
7	35.2115210204819\\
8	29.1648835811223\\
9	29.5000336512586\\
};
\addlegendentry{Original error indicator}

\addplot [color=gray, mark=o, mark options={solid, gray}, line width=1.0pt]
  table[row sep=crcr]{%
1	23.4836756939218\\
2	25.3690978680673\\
3	25.7684107158397\\
4	34.3861970059542\\
5	26.9337151403374\\
6	37.8664252563045\\
7	30.707770557991\\
8	24.7235682825419\\
9	23.3913923800823\\
};
\addlegendentry{Modified error indicator}

\end{axis}
\end{tikzpicture}%
		\caption{Effectivity: Original vs Modified indicator.}
		\label{fig:fbc_effectivity}
	\end{subfigure}%
	\caption{FBC adaptive increment.  }
\end{figure}
%
\setlength\fheight{8cm}
\setlength\fwidth{8cm}	
\begin{figure}[h!]
	\centering
	\input{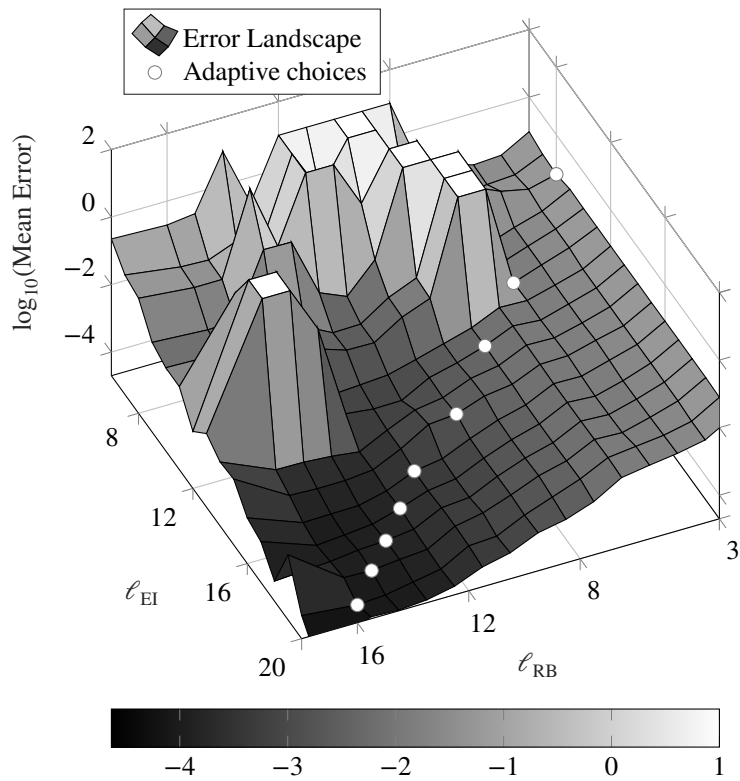}
	\caption{Error landscape for the \textsf{\sc{increase}} procedure for the FBC example.}
	\label{fig:inc-landscape}
\end{figure}
In \cref{fig:fbc_effectivity,,fig:fbc_effectivity_decrease}, we compare the effectivities of the original and the modified error indicators. On the one hand, both error indicators show good effectivities and are relatively sharp. On the other hand, the modified error indicator clearly outperforms the original indicator, especially in the final step of the algorithm. \cref{fig:dec-landscape} shows the error landscape for the \textsf{\sc{decrease}} case. Once again, one can see how \cref{alg_twoway_adap} avoids unstable RB, DEIM basis vector combinations and converges to a compact ROM. Finally, \cref{fig:fbc_error_compare_increase,,fig:fbc_error_compare_decrease} compare the modified error indicator for the final ROM over all time steps $t_{k}$, with the true error, in the increasing and decreasing cases, respectively. \cref{fig:fbc_error_compare} not only shows the sharpness of the modified error indicator, especially for the cyclic state in the time interval $[200,500]$s, but also verifies the reliability of the error indicator.
	\setlength\fheight{6cm}
	\setlength\fwidth{6cm}	
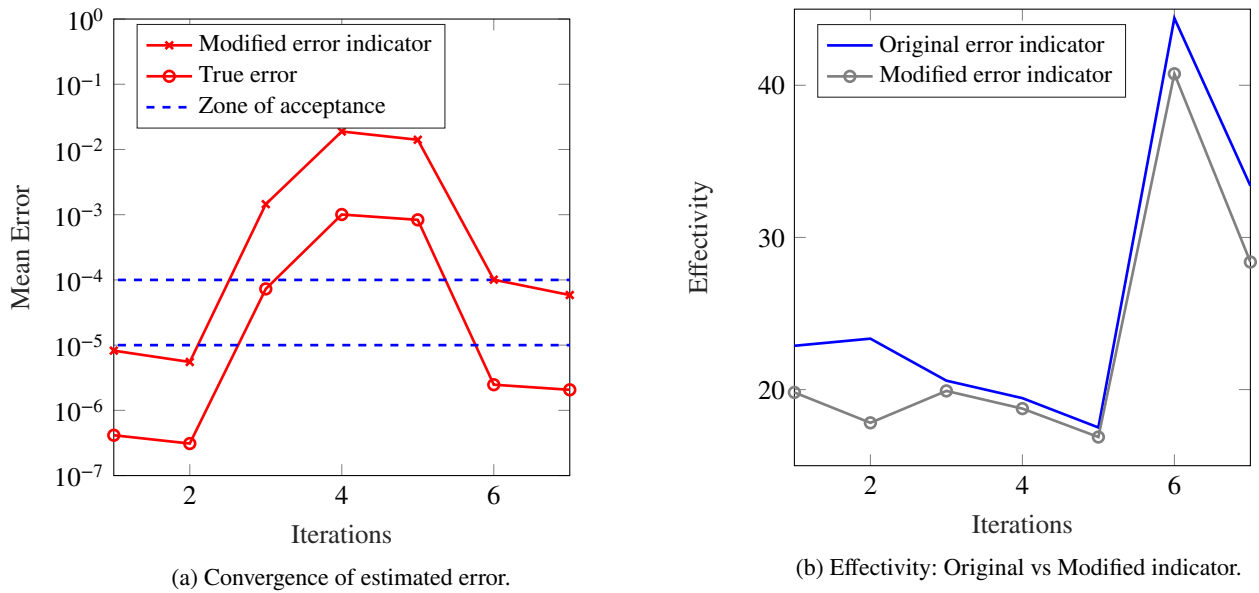
\begin{figure}[h!]
	\centering
	\begin{subfigure}[h]{0.5\textwidth}	
%
%
\definecolor{mycolor1}{rgb}{0.00000,0.44700,0.74100}%
\definecolor{mycolor2}{rgb}{0.85000,0.32500,0.09800}%
\begin{tikzpicture}

\begin{axis}[%
width=\fwidth,
height=\fheight,
at={(0\fwidth,0\fheight)},
scale only axis,
xmin=1,
xmax=7,
xlabel style={font=\color{white!15!black}},
xlabel={Iterations},
ymode=log,
ymin=1e-07,
ymax=1,
yminorticks=false,
ylabel style={font=\color{white!15!black}},
ylabel={Mean Error},
axis background/.style={fill=white},
legend style={at={(0.05,0.76)}, anchor=south west, legend cell align=left, align=left, draw=white!15!black, font = \scriptsize}
]
\addplot [color=red, line width=1.0pt,mark=x]
  table[row sep=crcr]{%
1	8.23093879889685e-06\\
2	5.52036981549891e-06\\
3	0.00144993550820055\\
4	0.018877529185802\\
5	0.0140814379728716\\
6	0.000100823923592323\\
7	5.86415432868762e-05\\
};
\addlegendentry{Modified error indicator}

\addplot [color=red, line width=1.0pt,mark=o, mark options={solid, red}]
  table[row sep=crcr]{%
1	4.15397283218613e-07\\
2	3.09831981291572e-07\\
3	7.28107062652784e-05\\
4	0.00100675952982042\\
5	0.000834027601854489\\
6	2.47396073931039e-06\\
7	2.06594050523347e-06\\
};
\addlegendentry{True error}
\addplot [color=blue, dashed, line width=1.0pt]
table[row sep=crcr]{%
	0	1e-04\\
	1	1e-04\\
	2	1e-04\\
	3	1e-04\\
	4	1e-04\\
	5	1e-04\\
	6	1e-04\\
	7	1e-04\\
	8	1e-04\\
	9	1e-04\\
};
\addplot [color=blue, dashed, line width=1.0pt]
table[row sep=crcr]{%
	0	1e-05\\
	1	1e-05\\
	2	1e-05\\
	3	1e-05\\
	4	1e-05\\
	5	1e-05\\
	6	1e-05\\
	7	1e-05\\
	8	1e-05\\
	9	1e-05\\
};
\addlegendentry{Zone of acceptance}

\end{axis}
\end{tikzpicture}%
		\caption{Convergence of estimated error.}
		\label{fig:fbc_convergence_decrease}
	\end{subfigure}%
	\begin{subfigure}[h]{0.5\textwidth}	
%
%
\definecolor{mycolor1}{rgb}{0.00000,0.44700,0.74100}%
\definecolor{mycolor2}{rgb}{0.85000,0.32500,0.09800}%
\begin{tikzpicture}

\begin{axis}[%
width=\fwidth,
height=\fheight,
at={(0\fwidth,0\fheight)},
scale only axis,
xmin=1,
xmax=7,
xlabel style={font=\color{white!15!black}},
xlabel={Iterations},
ymin=15,
ymax=45,
ylabel style={font=\color{white!15!black}},
ylabel={Effectivity},
axis background/.style={fill=white},
legend style={at={(0.05,0.806)}, anchor=south west, legend cell align=left, align=left, draw=white!15!black, font = \scriptsize}
]
\addplot [color=blue, line width=1.0pt]
  table[row sep=crcr]{%
1	22.8791926283667\\
2	23.3482227016157\\
3	20.587715797078\\
4	19.4381436258951\\
5	17.5096554052539\\
6	44.4089890621363\\
7	33.3952158162367\\
};
\addlegendentry{Original error indicator}

\addplot [color=gray, mark=o, mark options={solid, gray}, line width=1.0pt]
  table[row sep=crcr]{%
1	19.8146187551379\\
2	17.8173014692885\\
3	19.9137679411851\\
4	18.7507827109115\\
5	16.883659415541\\
6	40.7540515862945\\
7	28.3849138628749\\
};
\addlegendentry{Modified error indicator}

\end{axis}
\end{tikzpicture}%
		\caption{Effectivity: Original vs Modified indicator.}
		\label{fig:fbc_effectivity_decrease}
	\end{subfigure}%
	\caption{FBC adaptive decrement.}
\end{figure}
\setlength\fheight{8cm}
\setlength\fwidth{8cm}
\begin{figure}[t!]
	\centering
%
%
\begin{tikzpicture}

\begin{axis}[%
width=\fwidth,
height=\fheight,
at={(0\fwidth,0\fheight)},
scale only axis,
point meta min=-5.65946761698962,
point meta max=1,
xmin=16,
xmax=31,
xtick = {16,20,24,28,31},
tick align=outside,
xlabel style={font=\color{white!15!black}},
xlabel={$\ell_{\text{RB}}$},
ymin=25,
ymax=39,
ytick = {27,30,33,36,39},
ylabel style={font=\color{white!15!black}},
ylabel={$\ell_{\text{EI}}$},
zmin=-6,
zmax=2,
ztick = {-6, -2, 2},
zlabel style={font=\color{white!15!black}},
zlabel={$\log_{10}$(Mean Error)},
x dir=reverse,
y dir = reverse,
view={-30.5}{80},
%
axis background/.style={fill=white},
xmajorgrids,
ymajorgrids,
zmajorgrids,
legend style={at={(0.02,0.893)}, anchor=south west, legend cell align=left, align=left, draw=white!15!black},
colormap/blackwhite,
colorbar horizontal,
colorbar style = {xtick={-6,-5,...,1}}
]

\addplot3[%
surf,
shader=flat mean, z buffer = auto, draw=black, colormap/blackwhite, mesh/rows=15]
table[row sep=crcr, point meta=\thisrow{c}] {%
x	y	z	c\\
31	39	-5.08455062749391	-5.08455062749391\\
31	38	-4.82889626702629	-4.82889626702629\\
31	37	-4.90419403952846	-4.90419403952846\\
31	36	-4.87221828567251	-4.87221828567251\\
31	35	-4.880572427442	-4.880572427442\\
31	34	-4.98357227732457	-4.98357227732457\\
31	33	1	1\\
31	32	1	1\\
31	31	1	1\\
31	30	1	1\\
31	29	1	1\\
31	28	1	1\\
31	27	1	1\\
31	26	1	1\\
29	39	-5.50478596068582	-5.50478596068582\\
29	38	-5.25569837017008	-5.25569837017008\\
29	37	-5.3004791901123	-5.3004791901123\\
29	36	-5.19038511913399	-5.19038511913399\\
29	35	-5.22266948912669	-5.22266948912669\\
29	34	-5.10080821349788	-5.10080821349788\\
29	33	1	1\\
29	32	1	1\\
29	31	1	1\\
29	30	1	1\\
29	29	1	1\\
29	28	1	1\\
29	27	1	1\\
29	26	1	1\\
28	39	-5.65946761698962	-5.65946761698962\\
28	38	-5.36680086342549	-5.36680086342549\\
28	37	-5.38410595981967	-5.38410595981967\\
28	36	-5.25803182744278	-5.25803182744278\\
28	35	-5.30477560659481	-5.30477560659481\\
28	34	-5.15820480116745	-5.15820480116745\\
28	33	1	1\\
28	32	1	1\\
28	31	1	1\\
28	30	1	1\\
28	29	1	1\\
28	28	1	1\\
28	27	1	1\\
28	26	1	1\\
27	39	-5.60468810970501	-5.60468810970501\\
27	38	-5.2807180020778	-5.2807180020778\\
27	37	-5.23698138688929	-5.23698138688929\\
27	36	-5.16127122237391	-5.16127122237391\\
27	35	-5.20312767754494	-5.20312767754494\\
27	34	-5.11061793385156	-5.11061793385156\\
27	33	1	1\\
27	32	1	1\\
27	31	-1.11008209908739	-1.11008209908739\\
27	30	-4.06286272720499	-4.06286272720499\\
27	29	-4.00357933182886	-4.00357933182886\\
27	28	-4.16193866179409	-4.16193866179409\\
27	27	-3.45961406002951	-3.45961406002951\\
27	26	-1.00574446201055	-1.00574446201055\\
26	39	-5.18007859280303	-5.18007859280303\\
26	38	-5.01771656435891	-5.01771656435891\\
26	37	-5.01580665394912	-5.01580665394912\\
26	36	-5.0110290881288	-5.0110290881288\\
26	35	-5.01185941516824	-5.01185941516824\\
26	34	-5.00529226459444	-5.00529226459444\\
26	33	1	1\\
26	32	1	1\\
26	31	0.00824672753790531	0.00824672753790531\\
26	30	-1.22684032339669	-1.22684032339669\\
26	29	-1.57303389106139	-1.57303389106139\\
26	28	-2.84983518836612	-2.84983518836612\\
26	27	-2.72581307317531	-2.72581307317531\\
26	26	1	1\\
25	39	-5.17893995787353	-5.17893995787353\\
25	38	-4.95570748495411	-4.95570748495411\\
25	37	-4.96441397309437	-4.96441397309437\\
25	36	-4.96803482599375	-4.96803482599375\\
25	35	-4.97449009977386	-4.97449009977386\\
25	34	-4.96472351933639	-4.96472351933639\\
25	33	1	1\\
25	32	1	1\\
25	31	0.192716464503581	0.192716464503581\\
25	30	-1.18136469198744	-1.18136469198744\\
25	29	-1.4270324704453	-1.4270324704453\\
25	28	-2.44924148248948	-2.44924148248948\\
25	27	-2.56653744061087	-2.56653744061087\\
25	26	0.677851483400506	0.677851483400506\\
24	39	-5.27578506209704	-5.27578506209704\\
24	38	-5.31903544306211	-5.31903544306211\\
24	37	-5.30574681102731	-5.30574681102731\\
24	36	-5.31070549065126	-5.31070549065126\\
24	35	-5.34322837373808	-5.34322837373808\\
24	34	-5.36393795143917	-5.36393795143917\\
24	33	1	1\\
24	32	1	1\\
24	31	-0.34187408992718	-0.34187408992718\\
24	30	-2.90580152255869	-2.90580152255869\\
24	29	-2.79080327200499	-2.79080327200499\\
24	28	-3.77141977003426	-3.77141977003426\\
24	27	-4.58187558822111	-4.58187558822111\\
24	26	-4.84895334599937	-4.84895334599937\\
23	39	-5.64276201421677	-5.64276201421677\\
23	38	-5.63510972474738	-5.63510972474738\\
23	37	-5.59072767992205	-5.59072767992205\\
23	36	-5.57849680974033	-5.57849680974033\\
23	35	-5.65254210871531	-5.65254210871531\\
23	34	-5.58698237040999	-5.58698237040999\\
23	33	1	1\\
23	32	1	1\\
23	31	1	1\\
23	30	-2.0476109671476	-2.0476109671476\\
23	29	-1.97288766772728	-1.97288766772728\\
23	28	-3.06438060926766	-3.06438060926766\\
23	27	-4.37661063042559	-4.37661063042559\\
23	26	-4.98637787845988	-4.98637787845988\\
22	39	-5.28054950599896	-5.28054950599896\\
22	38	-5.26279390692263	-5.26279390692263\\
22	37	-5.23721160111308	-5.23721160111308\\
22	36	-5.235450932635	-5.235450932635\\
22	35	-5.25974112870759	-5.25974112870759\\
22	34	-5.27842517659807	-5.27842517659807\\
22	33	-2.83865131435504	-2.83865131435504\\
22	32	-2.77262976615216	-2.77262976615216\\
22	31	-4.29687857675017	-4.29687857675017\\
22	30	-4.46269415726942	-4.46269415726942\\
22	29	-4.41985856084573	-4.41985856084573\\
22	28	-4.67047614032346	-4.67047614032346\\
22	27	-4.77669900278356	-4.77669900278356\\
22	26	-4.87290841055576	-4.87290841055576\\
21	39	-5.07203713908596	-5.07203713908596\\
21	38	-5.04821371190611	-5.04821371190611\\
21	37	-5.07674396872475	-5.07674396872475\\
21	36	-5.07748027446688	-5.07748027446688\\
21	35	-5.08007142284546	-5.08007142284546\\
21	34	-5.07871318081333	-5.07871318081333\\
21	33	-1.69613257645681	-1.69613257645681\\
21	32	-1.72405484961388	-1.72405484961388\\
21	31	-3.3612581784276	-3.3612581784276\\
21	30	-4.48686673195954	-4.48686673195954\\
21	29	-4.03129868225715	-4.03129868225715\\
21	28	-4.5459530640674	-4.5459530640674\\
21	27	-4.75552132553262	-4.75552132553262\\
21	26	-4.65938061572383	-4.65938061572383\\
20	39	-4.96218706524699	-4.96218706524699\\
20	38	-4.9439241703844	-4.9439241703844\\
20	37	-4.98432515091041	-4.98432515091041\\
20	36	-4.98844647113806	-4.98844647113806\\
20	35	-4.99119662960758	-4.99119662960758\\
20	34	-4.96387038385813	-4.96387038385813\\
20	33	-0.324308454723164	-0.324308454723164\\
20	32	-0.287908722404269	-0.287908722404269\\
20	31	-1.85135299350388	-1.85135299350388\\
20	30	-4.16276298978767	-4.16276298978767\\
20	29	-3.77643859664947	-3.77643859664947\\
20	28	-4.45048935521091	-4.45048935521091\\
20	27	-4.89929735797578	-4.89929735797578\\
20	26	-4.81989174463823	-4.81989174463823\\
19	39	-4.7066645025039	-4.7066645025039\\
19	38	-4.6813945010954	-4.6813945010954\\
19	37	-4.75550740923455	-4.75550740923455\\
19	36	-4.76220166191998	-4.76220166191998\\
19	35	-4.76485312469578	-4.76485312469578\\
19	34	-4.70139988441165	-4.70139988441165\\
19	33	0.00110169621638641	0.00110169621638641\\
19	32	0.00448648434436201	0.00448648434436201\\
19	31	-0.724983432447409	-0.724983432447409\\
19	30	-3.99643640587288	-3.99643640587288\\
19	29	-3.32075037469409	-3.32075037469409\\
19	28	-4.1504670668939	-4.1504670668939\\
19	27	-4.36373979482579	-4.36373979482579\\
19	26	-4.78984527422265	-4.78984527422265\\
18	39	-4.76738448496646	-4.76738448496646\\
18	38	-4.75081350232317	-4.75081350232317\\
18	37	-4.80474427887399	-4.80474427887399\\
18	36	-4.81001525080273	-4.81001525080273\\
18	35	-4.81023737510622	-4.81023737510622\\
18	34	-4.75674522923272	-4.75674522923272\\
18	33	-4.31089434196027	-4.31089434196027\\
18	32	-4.28840893274317	-4.28840893274317\\
18	31	-4.59590590320183	-4.59590590320183\\
18	30	-4.37267065737302	-4.37267065737302\\
18	29	-4.42845756599719	-4.42845756599719\\
18	28	-4.44292247064906	-4.44292247064906\\
18	27	-4.51283040102885	-4.51283040102885\\
18	26	-4.77637246288058	-4.77637246288058\\
17	39	-4.54524431769954	-4.54524431769954\\
17	38	-4.48957396641608	-4.48957396641608\\
17	37	-4.55962882770378	-4.55962882770378\\
17	36	-4.56111141758306	-4.56111141758306\\
17	35	-4.56262785688936	-4.56262785688936\\
17	34	-4.51060872019199	-4.51060872019199\\
17	33	-4.20808971852902	-4.20808971852902\\
17	32	-4.19254723982814	-4.19254723982814\\
17	31	-4.44470033273242	-4.44470033273242\\
17	30	-4.19597423747007	-4.19597423747007\\
17	29	-4.2160739219361	-4.2160739219361\\
17	28	-4.23179460875992	-4.23179460875992\\
17	27	-4.3526980345572	-4.3526980345572\\
17	26	-4.56317852114664	-4.56317852114664\\
16	39	-4.15959871382949	-4.15959871382949\\
16	38	-4.14013301443113	-4.14013301443113\\
16	37	-4.15287343088787	-4.15287343088787\\
16	36	-4.14204705830454	-4.14204705830454\\
16	35	-4.13901774846651	-4.13901774846651\\
16	34	-4.14401738624762	-4.14401738624762\\
16	33	-3.83231234891717	-3.83231234891717\\
16	32	-3.78958133326175	-3.78958133326175\\
16	31	-4.10331672367995	-4.10331672367995\\
16	30	-3.9041113953889	-3.9041113953889\\
16	29	-3.88583158698749	-3.88583158698749\\
16	28	-3.90348941073285	-3.90348941073285\\
16	27	-4.1858001850623	-4.1858001850623\\
16	26	-4.20984378955893	-4.20984378955893\\
};
\addlegendentry{Error Landscape}

\addplot3[only marks, mark=*, mark options={}, mark size=2.5000pt, color=gray, fill=white] table[row sep=crcr]{%
x	y	z\\
31	39	-5.08455062749391\\
28	36	-5.25803182744278\\
22	33	-2.83865131435504\\
21	32	-1.72405484961388\\
20	31	-1.85135299350388\\
19	30	-3.99643640587288\\
17	28	-4.23179460875992\\
};
\addlegendentry{Adaptive choices}

\end{axis}
\end{tikzpicture}%
	\caption{Error landscape for the \textsf{\sc{decrease}} procedure for the FBC example.}
	\label{fig:dec-landscape}
\end{figure}
	\setlength\fheight{6cm}
	\setlength\fwidth{6cm}
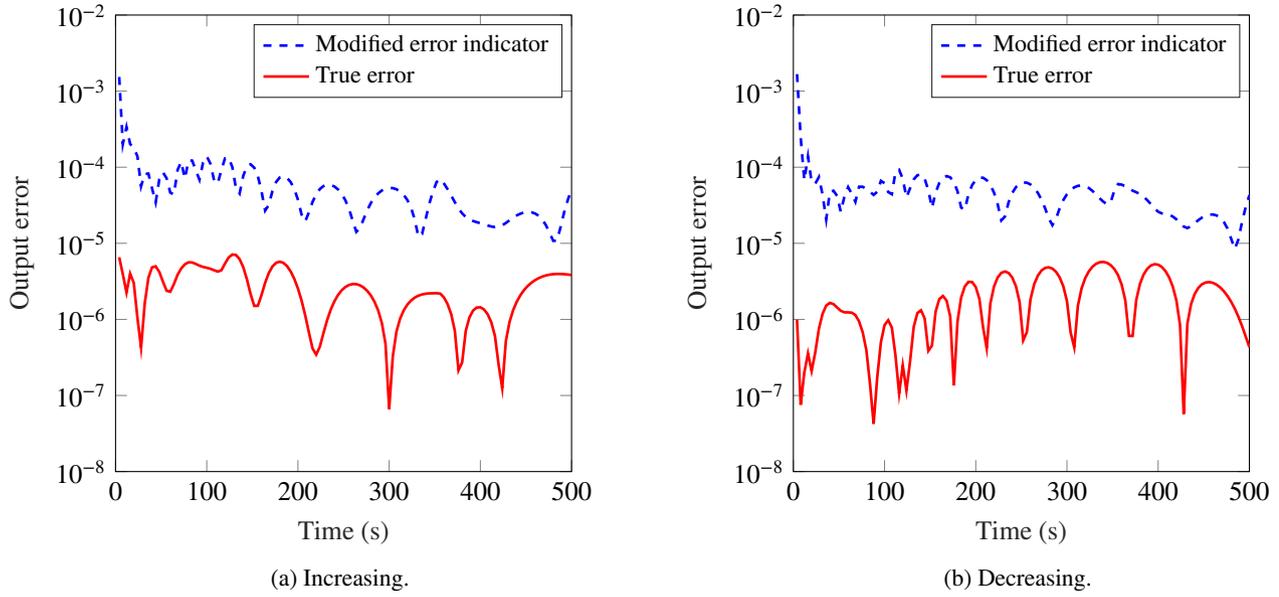
\begin{figure}[t!]
	\begin{subfigure}[b]{0.5\textwidth}
%
%
\definecolor{mycolor1}{rgb}{0.00000,0.44700,0.74100}%
\definecolor{mycolor2}{rgb}{0.85000,0.32500,0.09800}%
\begin{tikzpicture}

\begin{axis}[%
width=\fwidth,
height=\fheight,
at={(0\fwidth,0\fheight)},
scale only axis,
xmin=0,
xmax=500,
xlabel style={font=\color{white!15!black}},
xlabel={Time (s)},
ymode=log,
ymin=1e-08,
ymax=0.01,
yminorticks=false,
ylabel style={font=\color{white!15!black}},
ylabel={Output error},
axis background/.style={fill=white},
legend style={legend cell align=left, align=left, draw=white!15!black, font = \scriptsize}
]
\addplot [color=blue, dashed,line width=1.0pt]
table[row sep=crcr]{%
	4	0.00153725233380292\\
	8	0.000201362706774701\\
	12	0.000338026236558367\\
	16	0.000201755252831545\\
	20	0.000179547120814056\\
	24	0.000139321354440808\\
	28	5.62881926452451e-05\\
	32	8.12538359530341e-05\\
	36	8.24971410954377e-05\\
	40	5.28538067211552e-05\\
	44	3.45215059349246e-05\\
	48	6.96056769393852e-05\\
	52	8.54678245542356e-05\\
	56	7.15825847694358e-05\\
	60	4.6306275364232e-05\\
	64	4.39358981933259e-05\\
	68	9.20902031858024e-05\\
	72	0.000117822862885997\\
	76	7.09536095973851e-05\\
	80	0.000122698155099615\\
	84	0.000122235597047863\\
	88	9.05964954555181e-05\\
	92	6.85323944405797e-05\\
	96	0.000116003751951246\\
	100	0.000138915072158684\\
	104	0.000111287878719382\\
	108	6.72769938577619e-05\\
	112	6.08533372326546e-05\\
	116	9.23817551267712e-05\\
	120	0.000130921473005998\\
	124	0.000133629124322808\\
	128	0.000100507577397858\\
	132	5.28075377884449e-05\\
	136	4.50809308030336e-05\\
	140	7.71752013598859e-05\\
	144	0.000102639545945801\\
	148	0.00010929199119191\\
	152	9.88285989793423e-05\\
	156	7.63781669803579e-05\\
	160	4.83210876697281e-05\\
	164	2.69066320826005e-05\\
	168	3.19923647916677e-05\\
	172	4.86232053655929e-05\\
	176	6.39237641264861e-05\\
	180	7.32603742306391e-05\\
	184	7.59201219711367e-05\\
	188	7.2411286011787e-05\\
	192	6.38248194211673e-05\\
	196	5.15856835146114e-05\\
	200	3.73884733432534e-05\\
	204	2.38114398446774e-05\\
	208	1.94032597412307e-05\\
	212	2.55217746227978e-05\\
	216	3.49358767333857e-05\\
	220	4.41998838921286e-05\\
	224	5.16159320835853e-05\\
	228	5.65446346902959e-05\\
	232	5.88127461584711e-05\\
	236	5.84872312718559e-05\\
	240	5.57815668773586e-05\\
	244	5.10059003167357e-05\\
	248	4.45371071888002e-05\\
	252	3.68057628808544e-05\\
	256	2.83167141564284e-05\\
	260	1.98281978986935e-05\\
	264	1.41478596173972e-05\\
	268	1.58690147491732e-05\\
	272	2.08091616691291e-05\\
	276	2.72308128332074e-05\\
	280	3.3942787484215e-05\\
	284	4.01417034126809e-05\\
	288	4.53901383723616e-05\\
	292	4.94382522506872e-05\\
	296	5.21325596062195e-05\\
	300	5.33774624640148e-05\\
	304	5.31176523696396e-05\\
	308	5.13289063284797e-05\\
	312	4.801293154014e-05\\
	316	4.31952530884245e-05\\
	320	3.69278509354016e-05\\
	324	2.93058508680006e-05\\
	328	2.0557483709708e-05\\
	332	1.22083113298555e-05\\
	336	1.31539190643454e-05\\
	340	2.08239403478774e-05\\
	344	3.3217095162539e-05\\
	348	4.78942843149311e-05\\
	352	6.39335955027946e-05\\
	356	6.96781465537402e-05\\
	360	6.11860285314198e-05\\
	364	5.15611238642773e-05\\
	368	4.24094668783452e-05\\
	372	3.4693279233484e-05\\
	376	2.88138134284369e-05\\
	380	2.47434442577107e-05\\
	384	2.21622009887929e-05\\
	388	2.061170054796e-05\\
	392	1.96513593037354e-05\\
	396	1.89587325655354e-05\\
	400	1.83480198630104e-05\\
	404	1.77420691189849e-05\\
	408	1.71431763867742e-05\\
	412	1.66232144630652e-05\\
	416	1.63310245950113e-05\\
	420	1.64790292132713e-05\\
	424	1.72229945468266e-05\\
	428	1.84996907110702e-05\\
	432	2.00804052723776e-05\\
	436	2.17308434606708e-05\\
	440	2.32654887193356e-05\\
	444	2.4536678446473e-05\\
	448	2.54186570779641e-05\\
	452	2.57987848636967e-05\\
	456	2.55743955139327e-05\\
	460	2.46538896019412e-05\\
	464	2.29633811653278e-05\\
	468	2.04663451674138e-05\\
	472	1.7224341409879e-05\\
	476	1.36018189841249e-05\\
	480	1.07694898457294e-05\\
	484	1.08245458606437e-05\\
	488	1.63966408556436e-05\\
	492	2.58602203069465e-05\\
	496	3.75389416610344e-05\\
	500	5.11205529696606e-05\\
};
\addlegendentry{Modified error indicator}
\addplot [color=red,line width=1.0pt]
  table[row sep=crcr]{%
4	6.56130085013862e-06\\
8	3.91757715417773e-06\\
12	2.25530792674622e-06\\
16	3.99193624561756e-06\\
20	3.01581971468012e-06\\
24	1.0501799102558e-06\\
28	4.25781719048945e-07\\
32	1.61863939285301e-06\\
36	3.51708331036704e-06\\
40	4.82827608377923e-06\\
44	4.98095349671379e-06\\
48	4.20049165156255e-06\\
52	3.12411587133354e-06\\
56	2.37679749914843e-06\\
60	2.30934205824873e-06\\
64	2.91315528844205e-06\\
68	3.86165618182499e-06\\
72	4.67679738924698e-06\\
76	5.29579172381389e-06\\
80	5.62458657826781e-06\\
84	5.59889219464704e-06\\
88	5.35365731957338e-06\\
92	5.08101127372207e-06\\
96	4.89243587020738e-06\\
100	4.76777370328474e-06\\
104	4.62326380812339e-06\\
108	4.42941337785996e-06\\
112	4.24464560799809e-06\\
116	4.42155273872036e-06\\
120	5.43943046826811e-06\\
124	6.5172099176225e-06\\
128	7.12833785776468e-06\\
132	7.02257780316184e-06\\
136	6.18945007191396e-06\\
140	4.84721756432993e-06\\
144	3.37095048241753e-06\\
148	2.1601833168905e-06\\
152	1.50536462306672e-06\\
156	1.51034699213959e-06\\
160	2.0908964213362e-06\\
164	3.03204674423352e-06\\
168	4.06957047160983e-06\\
172	4.96354570844737e-06\\
176	5.54564477339703e-06\\
180	5.73639680412352e-06\\
184	5.53850562712288e-06\\
188	5.01612941439245e-06\\
192	4.26940686892863e-06\\
196	3.41075782406275e-06\\
200	2.54642326047438e-06\\
204	1.76430652087944e-06\\
208	1.12768711857392e-06\\
212	6.73661928907165e-07\\
216	4.14965882655771e-07\\
220	3.43903774346899e-07\\
224	4.37331047487e-07\\
228	6.61866750184537e-07\\
232	9.78762715098114e-07\\
236	1.34806816765032e-06\\
240	1.73190831420644e-06\\
244	2.09683437479846e-06\\
248	2.41530094191589e-06\\
252	2.66638773194217e-06\\
256	2.83591286609486e-06\\
260	2.91609101832346e-06\\
264	2.90487978704679e-06\\
268	2.80513801498294e-06\\
272	2.62369595827039e-06\\
276	2.37041292794338e-06\\
280	2.0572758442361e-06\\
284	1.69757322709962e-06\\
288	1.3051641166717e-06\\
292	8.93850112371197e-07\\
296	4.76850724684397e-07\\
300	6.63770585163093e-08\\
304	3.26704180486637e-07\\
308	6.9312580042169e-07\\
312	1.02540410062968e-06\\
316	1.31796111468852e-06\\
320	1.56719373189951e-06\\
324	1.77149739122306e-06\\
328	1.93125110681258e-06\\
332	2.04876971099388e-06\\
336	2.12822843825844e-06\\
340	2.17556435311295e-06\\
344	2.19835861581252e-06\\
348	2.2057031801026e-06\\
352	2.20805518136302e-06\\
356	2.18352947201073e-06\\
360	1.94773623951594e-06\\
364	1.59103155095153e-06\\
368	1.17274572120074e-06\\
372	7.07283337120224e-07\\
376	2.14086850358619e-07\\
380	2.71998018885888e-07\\
384	7.09455984404528e-07\\
388	1.06195760796179e-06\\
392	1.30583828839015e-06\\
396	1.43187243817344e-06\\
400	1.44299631099631e-06\\
404	1.35029367875905e-06\\
408	1.16903249236522e-06\\
412	9.1568449056556e-07\\
416	6.06141843717545e-07\\
420	2.54923115949879e-07\\
424	1.24989399497011e-07\\
428	5.22013688941669e-07\\
432	9.25763627979137e-07\\
436	1.3268956141621e-06\\
440	1.7170416781731e-06\\
444	2.08881606467148e-06\\
448	2.43586385584305e-06\\
452	2.75292058815602e-06\\
456	3.03586094130281e-06\\
460	3.28172511632197e-06\\
464	3.48871995514699e-06\\
468	3.65619697006991e-06\\
472	3.78461174588285e-06\\
476	3.87546952118623e-06\\
480	3.93126116304821e-06\\
484	3.95539284436808e-06\\
488	3.95211193493505e-06\\
492	3.92643103175327e-06\\
496	3.8840516348726e-06\\
500	3.83128877134986e-06\\
};
\addlegendentry{True error}

\end{axis}
\end{tikzpicture}%
		\caption{Increasing.}
		\label{fig:fbc_error_compare_increase}
	\end{subfigure}%
	\begin{subfigure}[b]{0.5\textwidth}	
%
%
\definecolor{mycolor1}{rgb}{0.00000,0.44700,0.74100}%
\definecolor{mycolor2}{rgb}{0.85000,0.32500,0.09800}%
\begin{tikzpicture}

\begin{axis}[%
width=\fwidth,
height=\fheight,
at={(0\fwidth,0\fheight)},
scale only axis,
xmin=0,
xmax=500,
xlabel style={font=\color{white!15!black}},
xlabel={Time (s)},
ymode=log,
ymin=1e-08,
ymax=0.01,
yminorticks=false,
ylabel style={font=\color{white!15!black}},
ylabel={Output error},
axis background/.style={fill=white},
legend style={legend cell align=left, align=left, draw=white!15!black, font = \scriptsize}
]
\addplot [color=blue, dashed, line width=1.0pt]
table[row sep=crcr]{%
	4	0.00167329541571962\\
	8	0.000247221508597389\\
	12	6.75413927257942e-05\\
	16	0.000138541364732635\\
	20	6.37781044407788e-05\\
	24	6.07427061128773e-05\\
	28	7.43340203407188e-05\\
	32	4.36009894766757e-05\\
	36	2.06066328608414e-05\\
	40	4.73675430577192e-05\\
	44	4.90618480845337e-05\\
	48	4.18615406170484e-05\\
	52	2.58250166425056e-05\\
	56	4.1500518312697e-05\\
	60	5.47629244838498e-05\\
	64	4.48835093727245e-05\\
	68	3.51301458053421e-05\\
	72	5.53633990887703e-05\\
	76	5.53235627337607e-05\\
	80	5.18856868087464e-05\\
	84	4.76179870483209e-05\\
	88	4.32526891575322e-05\\
	92	4.75604416630253e-05\\
	96	6.61908987922622e-05\\
	100	6.1665235162423e-05\\
	104	4.71755480831734e-05\\
	108	4.41947806151756e-05\\
	112	8.70661990083921e-05\\
	116	9.23622463543115e-05\\
	120	7.0075414068759e-05\\
	124	4.01051376445428e-05\\
	128	5.2942867303755e-05\\
	132	6.92377401323039e-05\\
	136	7.8466878336569e-05\\
	140	8.01077698989322e-05\\
	144	7.03182501655679e-05\\
	148	5.0093849867366e-05\\
	152	2.91539980955401e-05\\
	156	4.6291680485286e-05\\
	160	6.19719318140613e-05\\
	164	7.22198578031734e-05\\
	168	7.64469349769432e-05\\
	172	7.37210900635137e-05\\
	176	6.38856443851815e-05\\
	180	4.8244560131634e-05\\
	184	2.94518390751319e-05\\
	188	2.76341186504719e-05\\
	192	4.41830552570822e-05\\
	196	5.81686212527913e-05\\
	200	6.80003210802281e-05\\
	204	7.29304649300798e-05\\
	208	7.27147732341492e-05\\
	212	6.76290868747728e-05\\
	216	5.84346630113427e-05\\
	220	4.62817827924638e-05\\
	224	3.26280738769603e-05\\
	228	1.99854568114508e-05\\
	232	2.25068023320801e-05\\
	236	3.47332754151671e-05\\
	240	4.59029958800319e-05\\
	244	5.46551700383319e-05\\
	248	6.04466260739322e-05\\
	252	6.30757529701669e-05\\
	256	6.26014165052711e-05\\
	260	5.92939072627954e-05\\
	264	5.35963675878571e-05\\
	268	4.61023453038656e-05\\
	272	3.75597092717142e-05\\
	276	2.88986609197904e-05\\
	280	2.13198460112405e-05\\
	284	1.73179364234266e-05\\
	288	2.10480072621155e-05\\
	292	2.8894521119648e-05\\
	296	3.69964918991179e-05\\
	300	4.42203207429366e-05\\
	304	5.01015407330715e-05\\
	308	5.43775117151443e-05\\
	312	5.6884295233601e-05\\
	316	5.75290415958556e-05\\
	320	5.62936318456805e-05\\
	324	5.32753066329101e-05\\
	328	4.88021610863317e-05\\
	332	4.36423650513794e-05\\
	336	3.89043303522856e-05\\
	340	3.51346949562705e-05\\
	344	3.25905508260989e-05\\
	348	3.4959720133269e-05\\
	352	4.92797530354077e-05\\
	356	6.000960596221e-05\\
	360	5.9005212908758e-05\\
	364	5.61158130445941e-05\\
	368	5.23576907912899e-05\\
	372	4.92273327209505e-05\\
	376	4.69128778671654e-05\\
	380	4.43320919027903e-05\\
	384	4.09796612340236e-05\\
	388	3.69868860455074e-05\\
	392	3.27476870536385e-05\\
	396	2.88218276328237e-05\\
	400	2.59843525829878e-05\\
	404	2.47428984282178e-05\\
	408	2.43338775496791e-05\\
	412	2.37156587738331e-05\\
	416	2.2486833080611e-05\\
	420	2.06662723485731e-05\\
	424	1.85130726776302e-05\\
	428	1.65888302928716e-05\\
	432	1.58723765153806e-05\\
	436	1.67330563518392e-05\\
	440	1.83654234382312e-05\\
	444	2.01419872836585e-05\\
	448	2.17632067385058e-05\\
	452	2.303811379971e-05\\
	456	2.38059671598379e-05\\
	460	2.3919264643068e-05\\
	464	2.32446420095443e-05\\
	468	2.16703297791353e-05\\
	472	1.91232302378673e-05\\
	476	1.56203229908694e-05\\
	480	1.14802831368027e-05\\
	484	8.4605186451213e-06\\
	488	1.1263094550705e-05\\
	492	1.92595274998968e-05\\
	496	3.00298170910948e-05\\
	500	4.29315087137304e-05\\
};

\addlegendentry{Modified error indicator}
\addplot [color=red, line width=1.0pt]
  table[row sep=crcr]{%
4	9.91630219937534e-07\\
8	7.55023177867464e-08\\
12	2.00129468053145e-07\\
16	3.57283278795695e-07\\
20	2.09844299992046e-07\\
24	3.68840435749362e-07\\
28	7.520467777411e-07\\
32	1.1859923036428e-06\\
36	1.51923439717372e-06\\
40	1.65076150454579e-06\\
44	1.58767638114288e-06\\
48	1.43185583068028e-06\\
52	1.29739940246676e-06\\
56	1.23908635085712e-06\\
60	1.23707405452755e-06\\
64	1.22614395625664e-06\\
68	1.13969954484983e-06\\
72	9.43320252355662e-07\\
76	6.77921512703961e-07\\
80	3.9548503150133e-07\\
84	1.43517663975246e-07\\
88	4.2364383223692e-08\\
92	1.75467435359344e-07\\
96	5.01688136589351e-07\\
100	8.42958101210378e-07\\
104	9.75767589839194e-07\\
108	7.79793846716892e-07\\
112	3.33826974396878e-07\\
116	1.05901561076038e-07\\
120	2.48515939160754e-07\\
124	1.1593149562561e-07\\
128	2.81484244690766e-07\\
132	7.99141479546073e-07\\
136	1.21102389893935e-06\\
140	1.31852644680297e-06\\
144	1.02925658540531e-06\\
148	3.8604609631232e-07\\
152	4.52935555728118e-07\\
156	1.26845747261406e-06\\
160	1.84625339194611e-06\\
164	2.03317949054593e-06\\
168	1.76991608558108e-06\\
172	1.09656690605586e-06\\
176	1.35390302746785e-07\\
180	9.40843390306867e-07\\
184	1.94548447085552e-06\\
188	2.71166427778802e-06\\
192	3.11759578108806e-06\\
196	3.10093133104594e-06\\
200	2.66203215859573e-06\\
204	1.85777491668038e-06\\
208	7.88549373464953e-07\\
212	4.18539505941951e-07\\
216	1.6274118839199e-06\\
220	2.70934490820274e-06\\
224	3.55622284853307e-06\\
228	4.08974799170903e-06\\
232	4.26630982375276e-06\\
236	4.07774361166346e-06\\
240	3.54859343476566e-06\\
244	2.73072627243742e-06\\
248	1.696237190707e-06\\
252	5.29565990836645e-07\\
256	6.80357893356742e-07\\
260	1.84728284635671e-06\\
264	2.89363954997235e-06\\
268	3.75509478711678e-06\\
272	4.38354047105349e-06\\
276	4.74855109455774e-06\\
280	4.8374569179277e-06\\
284	4.65425867657299e-06\\
288	4.21765368874283e-06\\
292	3.55845692201484e-06\\
296	2.71668936435709e-06\\
300	1.73857641905251e-06\\
304	6.73657397309846e-07\\
308	4.27840550543124e-07\\
312	1.51725944130821e-06\\
316	2.54933963039772e-06\\
320	3.4836024409568e-06\\
324	4.28530016916007e-06\\
328	4.92597186707577e-06\\
332	5.38365955926601e-06\\
336	5.64284844495333e-06\\
340	5.69419696294649e-06\\
344	5.53411991455199e-06\\
348	5.16428131580593e-06\\
352	4.59104474948013e-06\\
356	3.82668431153643e-06\\
360	2.86627153578412e-06\\
364	1.77895517106119e-06\\
368	6.07400570395988e-07\\
372	6.05372350026556e-07\\
376	1.80163204033246e-06\\
380	2.9141700130797e-06\\
384	3.87443113281094e-06\\
388	4.62136361034649e-06\\
392	5.10948350407325e-06\\
396	5.31437170003812e-06\\
400	5.23463404422664e-06\\
404	4.89046168972163e-06\\
408	4.31966951530693e-06\\
412	3.57233921122191e-06\\
416	2.70509553979004e-06\\
420	1.77578492521757e-06\\
424	8.39036684241457e-07\\
428	5.70630592777732e-08\\
432	8.731432475928e-07\\
436	1.57965084546241e-06\\
440	2.15704550832196e-06\\
444	2.59528232515738e-06\\
448	2.8927864071937e-06\\
452	3.05511122145052e-06\\
456	3.09344862958749e-06\\
460	3.02312805922078e-06\\
464	2.86220917355973e-06\\
468	2.63023992830291e-06\\
472	2.34722213343641e-06\\
476	2.03280094457359e-06\\
480	1.70567400492594e-06\\
484	1.38320050524499e-06\\
488	1.08118007391411e-06\\
492	8.13765836826796e-07\\
496	5.93474431376251e-07\\
500	4.31257486055436e-07\\
};
\addlegendentry{True error}

\end{axis}
\end{tikzpicture}%
		\caption{Decreasing.}
		\label{fig:fbc_error_compare_decrease}
	\end{subfigure}%
	\caption{Final ROM error over all time for the FBC. }
	\label{fig:fbc_error_compare}
\end{figure}
\subsection{Parametric example}
We consider two examples of parametric systems. The first is the viscous Burgers' equation and the second is a chemical process called batch chromatography. The former is an example with a parametric dual system while the latter has a non-parametric dual system.
\subsubsection*{Burgers' equation}
We test the proposed Adaptive POD-(D)EIM-Greedy algorithm on the one-dimensional viscous Burgers' equation defined in the domain $\sigma \in \Omega \coloneqq [0 , 1]$. The parameter that varies is the viscosity. The equation and initial boundary conditions are given as
\begin{equation}
\begin{aligned}
\frac{\partial w}{\partial t} + w\frac{\partial w}{\partial \sigma}  &= \mu \frac{\partial ^2 w}{\partial \sigma^2} + s(\sigma,t),\\
w(\sigma , 0) &= 0, \\  
\frac{\partial w(1,t)}{\partial \sigma} &=  0,
\end{aligned}
\end{equation}
where $w \coloneqq w(\sigma,t) \in \mathbb{R}^{N} $ is the state variable. $s(\sigma,t)$ is the source/input term, $\mu$ is the viscosity. The output is taken at the last spatial point in the domain:  $y = w(1,t)$. We consider $s(\sigma,t) \equiv 1$. The initial condition is defined as $w(\sigma,0) := 0$.
\begin{table}[t!]
	\caption{Simulation parameters for the Burgers' equation.}
	\vspace{-5mm}	
	\begin{center}
		\begin{tabular}{|c|c|c|c|c|c|}
			\hline
			Interpolation & $N$ & $[0,T]$ (s) & Parameter training set ($\Xi$) & tolerance &  $\epsilon_{\text{EI}}$\\
			\hline
			\multirow{2}{*}{EIM}  & \multirow{2}{*}{500} & \multirow{2}{*}{[0,2]} & 100 $\log$-uniformly distributed samples& \multirow{2}{*}{$10^{-3}$} & \multirow{2}{*}{$10^{-10}$}\\
			&&& in [0.0005,1] && \\
			\hline
		\end{tabular}			
	\end{center}
	\label{tab:burgers_param}
\end{table}
\newline
	\setlength\fheight{6cm}
	\setlength\fwidth{6cm}
\begin{figure}[t!]			
	\centering
%
%
\begin{tikzpicture}
\begin{axis}[%
width=\fwidth,
height=\fheight,
at={(0\fwidth,0\fheight)},
scale only axis,
xmin=1,
xmax=16,
xlabel style={font=\color{white!15!black}},
xlabel={Iteration},
ymode=log,
ymin=0.00001,
ymax=100,
yminorticks=false,
ytick = {0.000001, 0.0001, 0.01, 1, 100},
ylabel style={font=\color{white!15!black}},
ylabel={Maximal error},
axis background/.style={fill=white},
legend style={legend cell align=left, align=left, draw=white!15!black, font = \scriptsize}
]
\addplot [color=red, line width=1.0pt]
  table[row sep=crcr]{%
1	303.77690470456\\
2	0.101779505732425\\
3	0.152342321275383\\
4	0.112731246617921\\
5	0.0942679364857335\\
6	0.0690171319035167\\
7	0.0574836063084119\\
8	0.0314269814140766\\
9	0.00145321643601173\\
10	0.0297306625555421\\
11	0.0124909092511437\\
12	0.0174187472957422\\
13	0.0203397108899014\\
14	0.0167471331506638\\
15	0.014141059220332\\
16	5.95741107320864e-05\\
};
\addlegendentry{Estimated error (Non-adaptive)}
\addplot [color=red, line width=1.0pt, mark=asterisk, mark options={solid, red}]
  table[row sep=crcr]{%
1	0.135169592265782\\
2	0.0417938169480999\\
3	0.01289076881716\\
4	0.00860297230957934\\
5	0.00470209626276879\\
6	0.0039205404206631\\
7	0.00251746747546189\\
8	0.00249851071662513\\
9	0.00288994364817099\\
10	0.00153863873883233\\
11	0.00097116707857313\\
12	0.000800306943065473\\
13	0.000563789731533654\\
14	0.000485185163644454\\
15	0.000371485764034632\\
16	0.000432534593848287\\
};
\addlegendentry{True error (Non-adaptive)}
\addplot [color=black, line width=1.0pt,mark=o, mark options={solid, black}]
  table[row sep=crcr]{%
1	80.5010090970679\\
2	0.382634486074283\\
3	0.273165704427768\\
4	0.0907667697149393\\
5	0.0629015862544381\\
6	0.0185406103144083\\
7	0.0412695614585471\\
8	0.0256121744356466\\
9	0.0228296640679302\\
10	0.000518181830883642\\
};
\addlegendentry{Estimated error (Adaptive)}
\addplot [color=black, line width=1.0pt, mark=x, mark options={solid, black}]
  table[row sep=crcr]{%
1	0.127368032832044\\
2	0.0621625171759098\\
3	0.0939483856361606\\
4	0.0037758669265859\\
5	0.00165161544821124\\
6	0.0026173198602807\\
7	0.00141465791569257\\
8	0.00109149013702179\\
9	0.000583264250107386\\
10	0.000642566025445847\\
};
\addlegendentry{True error (Adaptive)}
\addplot [color=blue, dashed, line width=1.0pt]
  table[row sep=crcr]{%
0	1e-03\\
1	1e-03\\
2	1e-03\\
3	1e-03\\
4	1e-03\\
5	1e-03\\
6	1e-03\\
7	1e-03\\
8	1e-03\\
9	1e-03\\
10	1e-03\\
11	1e-03\\
12	1e-03\\
13	1e-03\\
14	1e-03\\
15	1e-03\\
16	1e-03\\
};
\addlegendentry{Tolerance}
\end{axis}
\end{tikzpicture}%
	\caption{Burgers' equation: Convergence of the greedy algorithms: \cref{alg_podgreedy_ei} vs \cref{alg_adap_podgreedy}.}
	\label{fig:burgers_convergence}
\end{figure}
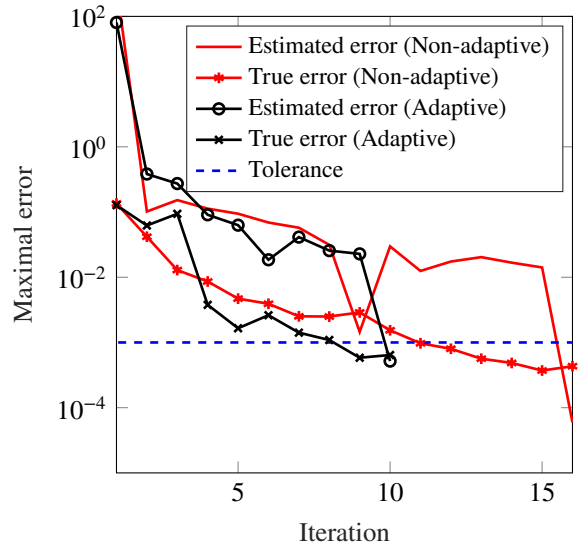
	\setlength\fheight{5.5cm}
	\setlength\fwidth{5.5cm}
\begin{figure}[h!]
	\begin{subfigure}[b]{0.5\textwidth}	
%
%
%
\definecolor{mycolor1}{rgb}{0.00000,0.44700,0.74100}%
\begin{tikzpicture}

\begin{axis}[%
width=\fwidth,
height=\fheight,
at={(0\fwidth,0\fheight)},
scale only axis,
xmin=0,
xmax=14,
xtick distance  = 3, 
xlabel style={font=\color{white!15!black}},
xlabel={Number of RB basis vectors},
ymin=0,
ymax=40,
ylabel style={font=\color{white!15!black}},
ylabel={Number of EIM basis vectors},
axis background/.style={fill=white}
]
\addplot [color=mycolor1, line width=1.0pt, mark=o, mark options={solid, mycolor1}]
  table[row sep=crcr]{%
1	1\\
5	6\\
7	12\\
8	18\\
9	23\\
10	27\\
11	31\\
12	35\\
13	38\\
14	40\\
};
\end{axis}
\end{tikzpicture}%
		\caption{}
		\label{fig:burgers_basis}
	\end{subfigure}%
	\begin{subfigure}[b]{0.5\textwidth}	
%
%
\definecolor{mycolor1}{rgb}{0.00000,0.44700,0.74100}%
\definecolor{mycolor2}{rgb}{0.85000,0.32500,0.09800}%
\begin{tikzpicture}

\begin{axis}[%
width=\fwidth,
height=\fheight,
at={(0\fwidth,0\fheight)},
scale only axis,
xmin=3,
xmax=10,
xlabel style={font=\color{white!15!black}},
xlabel={Iterations},
ymin=0,
ymax=40,
ylabel={Effectivity},
axis background/.style={fill=white},
legend style={legend cell align=left, align=left, draw=white!15!black, font = \scriptsize}
]
\addplot [color=blue, line width=1.0pt]
  table[row sep=crcr]{%
1	633.408958336416\\
2	6.18981767994867\\
3	2.96565425846326\\
4	24.1625127742445\\
5	38.7147966234475\\
6	7.82805270868821\\
7	29.5153776266478\\
8	23.8520523856474\\
9	39.5030323482792\\
10	1.11505536178961\\
};
\addlegendentry{Original error indicator}
\addplot [color=gray,mark=o, mark options={solid, gray}, dashed, thick]
  table[row sep=crcr]{%
1	632.03464250109\\
2	6.1553891872089\\
3	2.90761467137576\\
4	24.0386569441444\\
5	38.0848861171423\\
6	7.08381524007536\\
7	29.1728205107049\\
8	23.4653283313501\\
9	39.1412024030737\\
10	0.806425815190119\\
};
\addlegendentry{Modified error indicator}
\end{axis}
\end{tikzpicture}%
		\caption{}
		\label{fig:burgers_compare}
	\end{subfigure}%
	\caption{\cref{alg_adap_podgreedy} for the Burgers' equation. (a) Adaptive increment of RB vs EIM basis vectors. (b) Effectivity: Original vs modified indicator.}
\end{figure}
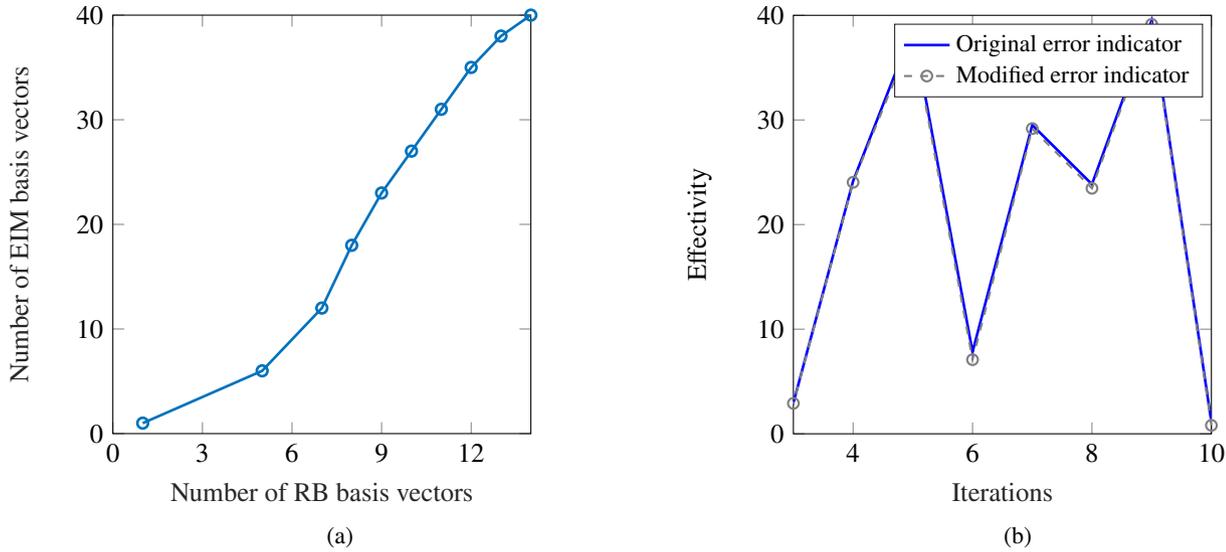
	\setlength\fheight{5.5cm}
	\setlength\fwidth{5.5cm}
\begin{figure}[t!]		
	\begin{subfigure}[b]{0.5\textwidth}
		\input{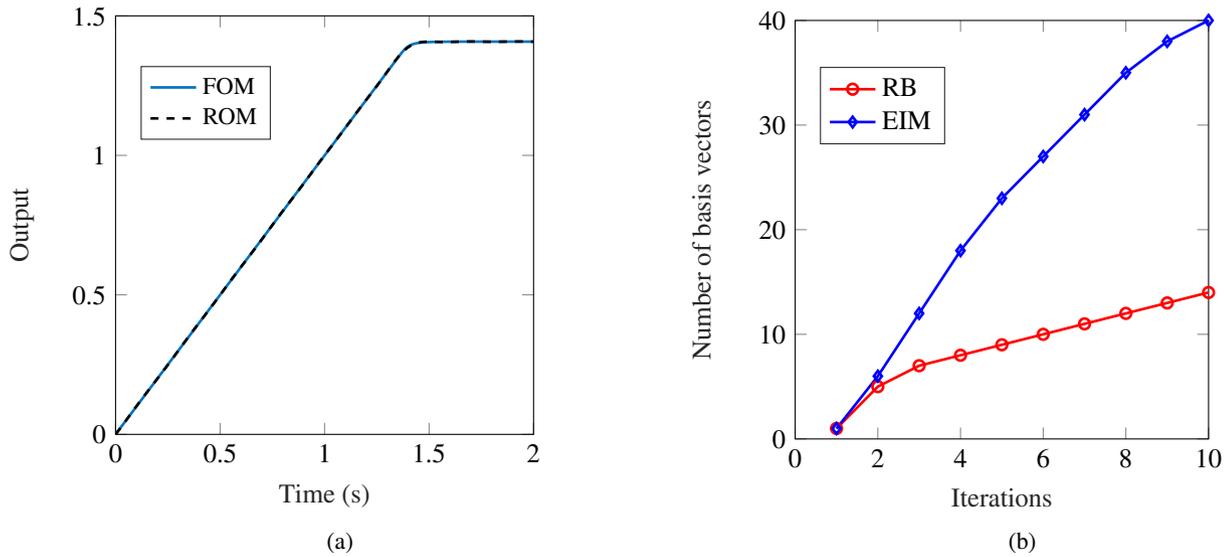}
		\caption{}
		\label{fig:burgers_compare_output}
	\end{subfigure}
	\begin{subfigure}[b]{0.5\textwidth}
%
%
\begin{tikzpicture}

\begin{axis}[%
width=0.989\fwidth,
height=\fheight,
at={(0\fwidth,0\fheight)},
scale only axis,
xmin=0,
xmax=10,
xlabel style={font=\color{white!15!black}},
xlabel={Iterations},
ymin=0,
ymax=40,
ylabel style={font=\color{white!15!black}},
ylabel={Number of basis vectors},
axis background/.style={fill=white},
legend style={at={(0.06,0.704)}, anchor=south west, legend cell align=left, align=left, draw=white!15!black}
]
\addplot [color=red, mark=o, mark options={solid, red},line width=1.0pt]
  table[row sep=crcr]{%
1	1\\
2	5\\
3	7\\
4	8\\
5	9\\
6	10\\
7	11\\
8	12\\
9	13\\
10	14\\
};
\addlegendentry{RB}

\addplot [color=blue, mark=diamond, mark options={solid, blue},line width=1.0pt]
  table[row sep=crcr]{%
1	1\\
2	6\\
3	12\\
4	18\\
5	23\\
6	27\\
7	31\\
8	35\\
9	38\\
10	40\\
};
\addlegendentry{EIM}

\end{axis}
\end{tikzpicture}%
		\caption{}
		\label{fig:burgers_basis_iteration}
	\end{subfigure}
	\caption{Burgers' Equation (a) Output at $\mu = 5.\,10^{-4}$. (b) RB, EIM basis vs iteration number.}
\end{figure}
\begin{table}[t]
	\caption{Runtime comparison for the Burgers' equation.}	
	\centering
	\begin{subtable}[t]{.4\linewidth}
		\centering
		\begin{tabular}{|c|c|}
			\hline
			Method    & runtime (s) \\ \hline
			Adaptive & 606         \\ \hline
			Non-adaptive       & 933        \\ \hline
		\end{tabular}
		\caption{Adaptive vs non-adaptive.}
		\label{tab:burgers_times}		
	\end{subtable}%
	\centering
	\begin{subtable}[t]{.4\linewidth}	
		\centering
		\begin{tabular}{|c|c|}
			\hline
			Method    & runtime (s) \\ \hline
			SVD & 3.7         \\ \hline
			RBF       & 0.4         \\ \hline
		\end{tabular}
		\caption{Inf-sup constant computed over training set.}
		\label{tab:burgers_infsup_RBF}		
	\end{subtable} 
\end{table}
The simulation parameters are listed in \cref{tab:burgers_param}. A training set, $\Xi$ is formed by 100 $\log$-uniformly distributed samples in the parameter domain $\mathscr{P} \coloneqq [0.0005\,,1]$. The model has $N = 500$ equations after discretization in space. We employ the central difference scheme for both the diffusion and convection terms. A semi-implicit Euler method is used to discretize the time variable. We make use of EIM to treat the nonlinear term. We set $\epsilon_{\text{EI}}$ to be $10^{-10}$ in \cref{alg_adap_podgreedy}. A time step of $\Delta t = 4\,\cdot 10^{-4}$ was used, with the snapshots collected every $10^{\text{th}}$ time step.
In \cref{fig:burgers_convergence}, we compare the Standard POD-Greedy-(D)EIM with the proposed Adaptive POD-Greedy-(D)EIM algorithm. It can be seen that using the latter leads to a much quicker convergence of the greedy loop: 10 iterations as compared to 16 iterations that the standard greedy algorithm needs. We show the convergence of the modified error indicator and the true error for both algorithms. In \cref{fig:burgers_convergence} and \cref{fig:batchchrom_convergence} the maximal errors are defined over all the parameters in the training set $\Xi$. In particular, $\bar{\Delta}_{\text{max}} \coloneqq \max\limits_{\mu \in \Xi} \bar{\Delta}(\mu)$, where $\bar{\Delta}(\mu)$ is defined in \cref{error_mean}. The maximal true error is defined as: $\max\limits_{\mu \in \Xi} 	\dfrac{1}{K} \sum_{i=1}^{K} \| y^{t_{i}} - \bar{y}^{t_{i}}_{r} \|$, where $t_{i}$, with $i = 1, 2, \ldots, K$, are the time instances where the snapshots are taken.  The improved convergence of \cref{alg_adap_podgreedy} is a direct consequence of enriching the basis in an adaptive manner.
\newline \indent
In \cref{fig:burgers_basis}, we plot the successive increments of the RB, EIM basis vectors. Starting from a value of $1$ for each, we can see that the biggest jumps are at the first few steps when the output error is estimated to be large. Subsequent steps moderate the number of basis vectors to be added, as the algorithm converges. We end up with a final value of $(\ell_{\text{RB}}, \ell_{\text{EI}}) =  (14, 40 )$ for the RB, EIM basis respectively. As for the standard implementation, where the EIM basis is precomputed outside the greedy loop, the resulting ROM has dimension $(\ell_{\text{RB}}, \ell_{\text{EI}}) = (16, 154)$. Thus, our proposed algorithm not only produces a ROM that meets a certain tolerance, but also leads to a more compact ROM. 
\newline
In \cref{tab:burgers_times}, we show the runtime taken for the Adaptive and Non-adaptive greedy algorithms till convergence. The adaptive algorithm needs much less time. The reduced runtime of the adaptive approach is mainly contributed by the reduced number of FOM simulations.
For the inf-sup constant (the smallest singular value of the system matrix) we apply radial basis function interpolation. From \cref{tab:burgers_infsup_RBF}, it is clear that the RBF approach is much faster as compared to using SVD to determine the smallest singular value of the system matrix. One can imagine, the savings in time would be much more significant for large-scale systems with $N \gg 500$.
\cref{fig:burgers_compare_output} shows the output $y(t, \mu)$ of the FOM and $y_{r}(t, \mu)$ of the ROM at $\mu = 5.\, 10^{-4}$. The ROM solution is nearly indistinguishable from the FOM solution. Finally, in \cref{fig:burgers_basis_iteration} we plot the number of RB, EIM basis vectors as a function of the iteration number. Note that, both RB, EIM basis start with just one basis vector at the first iteration.
\subsubsection*{Batch chromatography model}
The last example is from batch chromatography, a process used to separate components of a mixture. We give the schematic of the process in \cref{fig:chromatography}. The mixture containing two components that need to be separated is periodically injected at one end of the column. In the column, a static bed of a substance called the stationary phase is present. The injected mixture has to pass through this stationary phase. Batch chromatography relies on the phenomenon of adsorption. The components that need to be separated have different adsorption affinities towards the stationary phase and hence tend to move through the column with varying velocities. The separated components are then collected at the end of the column. The time of collecting the two components of the original mixture is determined based on the required purity specifications. The dynamic equations for the process can be given as,
\\
\begin{figure}[t]
	\centering
	\includegraphics[scale=0.6]{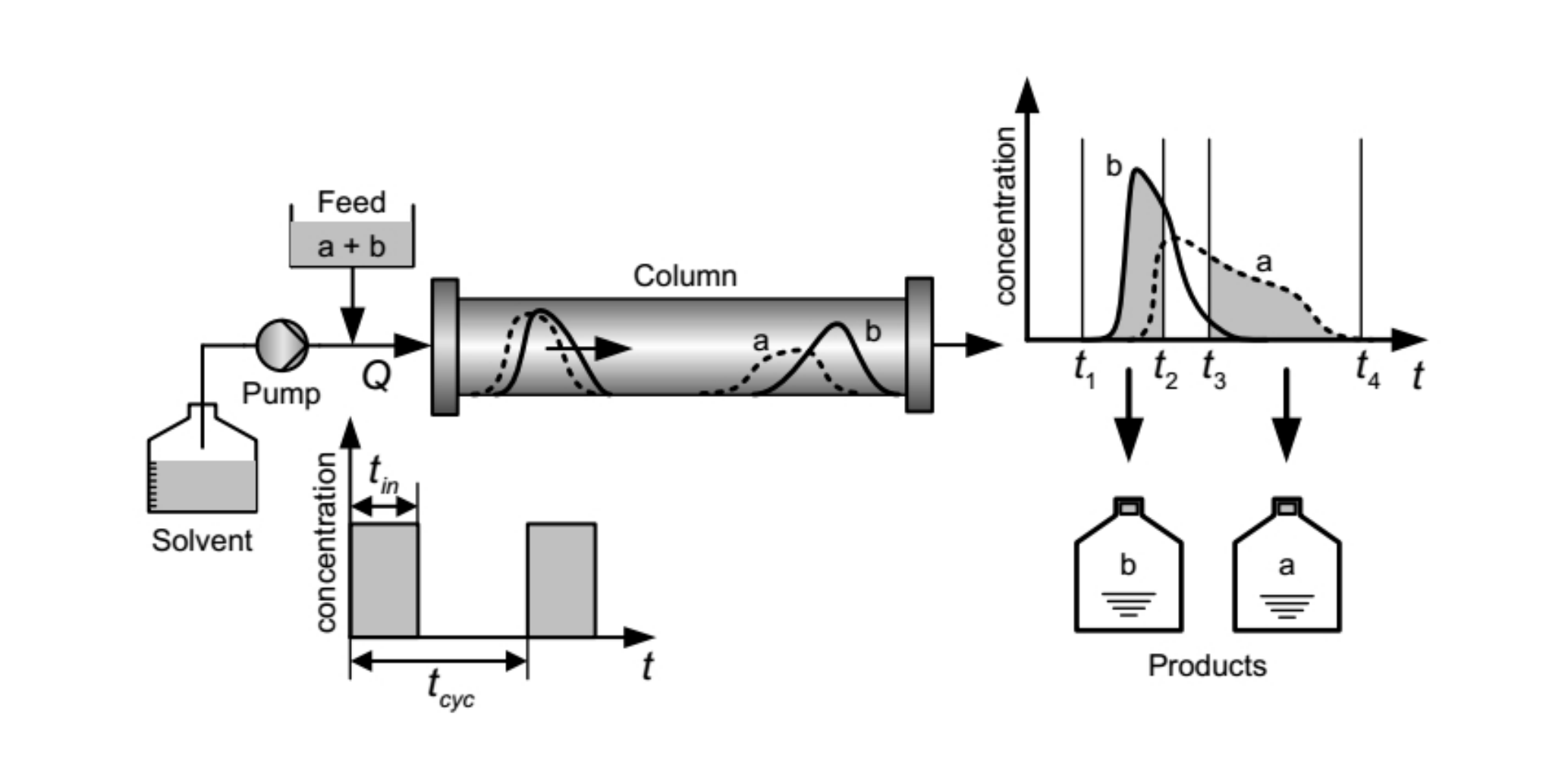}
	\caption{The schematic of a batch chromatographic process \cite{morZhaFLetal15}.}		
	\label{fig:chromatography}
\end{figure}
\begin{equation}
\begin{aligned}
\frac{\partial c_{z} }{\partial t} + \frac{1 - \epsilon}{\epsilon} \frac{\partial q_{z}}{\partial t} &= - \frac{\partial c_{z}}{\partial x} + \frac{1}{Pe} \frac{\partial^{2} c_{z}}{\partial x^{2}},\\ 
\frac{\partial q_{z}}{\partial t} &= \frac{L}{Q/\epsilon A_{c}} \kappa_{z} (q_{z}^{\text{eq}} - q_{z}),
\end{aligned}
\label{eqn:crystallizer_model}
\end{equation}
where
\begin{itemize}
	\item $c_{z}, q_{z}$ are the concentrations of the two components $(z = a,b)$ to be separated, in the solid and the liquid phase, respectively, 
	\item $\epsilon$ is the column porosity, $Pe$ is the Peclet number, $L$ is the length of the column, $Q$ is the volumetric feed flow rate, $A_{c}$ is the cross-section area and $\kappa_{z}$ the mass transfer coefficient,
	\item $q_{z}^{\text{eq}}$ is the adsorption equilibrium and it is the nonlinear term in the equation. It can be given as,
	\begin{align*}
	q_{z}^{\text{eq}} = \frac{H_{z1} c_{z} }{1 + K_{a1} c_{a}^{f} c_{a} + K_{b1} c_{b}^{f} c_{b} } + \frac{H_{z2} c_{z} }{1 + K_{a2} c_{a}^{f} c_{a} + K_{b2} c_{b}^{f} c_{b} },
	\end{align*}	
	\item $H_{z1}, H_{z2}$ are Henry's constants, $K_{z1}, K_{z2}$ are thermodynamic coefficients and $c_{z}^{f}$ is the feed concentration of each component $(z = a,b)$.
\end{itemize}
The PDE defining the batch chromatographic process is discretized using the finite volume method in space and a Crank-Nicolson scheme in time. Thus we have
\begin{equation}
\begin{aligned}
Ec_{z}^{k+1} &= Ac_{z}^{k} + d_{z}^{k} - \frac{1 - \epsilon}{\epsilon} \Delta t h_{z}^{k} ,\\
q_{z}^{k+1} &= q_{z}^{k} + \Delta t h_{z}^{k}.
\end{aligned}
\end{equation}
Here, $E, A$ are both non-parametric tri-diagonal matrices. The injection time $t_{in} \in [0.5, 2.0]$ and the volumetric feed flow rate $Q \in [0.0667, 0.1667]$ are the two parameters of interest. $d_{z}^{k}$ depends on $t_{in}$.
In particular,
\[  d_{z}^{k} = d_{0}^{k} (1, 0 , 0 , \ldots, 0)^{T}, \]
where $d_{0}^{k} \coloneqq \Delta x Pe \bigg( \frac{\Delta t}{2 \Delta x} + \frac{\Delta t}{Pe \Delta x^{2}}   \bigg) \chi (t^{k})$. The term $\chi(t^{k})$ can be given as
\[ \chi(t^{k}) = 1 \,  \text{if} \, \,  t^{k} \in [0 \,, t_{in}], \text{else} \, \,  0. \]
$h_{z}^{k}$ just corresponds to the right hand side of the second equation in \cref{eqn:crystallizer_model}, showing its dependency on $Q$.
Additionally, we note that for this example, we use the Adaptive Snapshot Selection (AdSS) technique \cite{morBenFetal15} to reduce the computational cost of SVD inside the DEIM algorithm and the greedy loop. 
\paragraph*{Adaptive Snapshot Selection (AdSS).}
For models where the number of time steps is large, it is often cumbersome to perform the SVD on such a large snapshot matrix. AdSS serves as a pre-treatment step to avoid this difficulty. It is essentially an algorithm to determine the linear dependency of successive vectors in the snapshot matrix. The angle between a new vector and the last selected vector is evaluated. If it falls below a tolerance it means that the new vector is almost linearly dependent on the last selected vector and thus can be discarded. For more details on this approach the reader is referred to \cite{morZhaFLetal14}. Finally, a much thinner snapshot matrix is obtained, reducing the costs of SVD in either the DEIM or the POD-Greedy algorithm.
\begin{table}[t]
	\caption{Simulation parameters for the batch chromatography  equation.}
	\vspace{-5mm}	
	\begin{center}
		\begin{tabular}{|c|c|c|c|c|}
			\hline
			Interpolation & $N$ & Parameter training set $\Xi$ & tolerance & $\epsilon_{\text{POD}}$\\
			\hline
			\multirow{2}{*}{DEIM} & \multirow{2}{*}{1000} & 60 samples uniformly distributed & \multirow{2}{*}{$10^{-4}$} & \multirow{2}{*}{$10^{-10}$}\\
			&& in $[0.0667, 0.1667 ] \times [0.5, 2.0]$ && \\
			\hline
		\end{tabular}			
	\end{center}
	\label{tab:batchchrom}
\end{table}	 
\newline
\indent
The parameters used for the simulations are shown in \cref{tab:batchchrom}. 
The system dimension is $N=1000$, where the finite volume method is used for space discretization. The Lax-Friedrichs flux is used for the convection term, while a central difference scheme is applied for the diffusion term. For details we refer to \cite{morZhaFLetal14}. A semi-implicit discretization is considered here for the time variable. We take 60 uniformly distributed parameters $\mu \coloneqq (Q, t_{in})$ from the parameter domain. The simulation time varies depending on the choice of $Q$. This is due to the fact that the volumetric feed flow ($Q$) determines the speed of flow of the components through the column. This speed in turn determines the time for which the model needs to be simulated per switching cycle.
The tolerance for the ROM error is set as $10^{-4}$. Since EIM is used for the Burger's equation, for this example, we use DEIM as the interpolation method, to show the flexibility of the adaptive algorithm. We set $\epsilon_{\text{POD}}$ to be $10^{-10}$. 

The dual system is a linear algebraic system without parameters, therefore, we 
propose to use the Krylov method \texttt{GMRES} (with the same configurations as done for the FBC example in Section 5.1) to compute the approximate solution to the dual system. The results are to be compared with the one obtained using the primal reduced basis to reduce the dual system. 

As for the Burgers' equation, for the batch chromatography example, we show the convergence of the modified error indicator and the corresponding true error, taken as the maximum over all parameters in $\Xi$, for each iteration. As can be seen from \cref{fig:batchchrom_convergence}, the proposed Adaptive POD-Greedy-(D)EIM algorithm results in a quicker convergence as compared to the Standard POD-Greedy. The Standard POD-Greedy results in a ROM of size $(\ell_{\text{RB}}, \ell_{\text{EI}}) = (47, 109)$ in 47 iterations, whereas the proposed Adaptive POD-Greedy method leads to a ROM of size $(\ell_{\text{RB}}, \ell_{\text{EI}}) = (46,50)$ in 29 iterations. \cref{fig:batchchrom_basis_iteration} shows the number of RB, DEIM basis vectors as a function of the iteration number, while \cref{fig:batchchrom_basis} shows the adaptive increase of the RB, DEIM basis vectors. Similar to the Burgers' equation, the largest jumps are in the first few steps when the error is large.
In \cref{batchchrom:effectivity_orig_mod}, we compare the effectivities of the original and the modified error indicators. The modified indicator offers more efficient results. This is due to the combination of two facts,
\begin{itemize}
	\item the use of \texttt{GMRES} for solving the dual system leads to a faster decay of the dual residual norm. 
	\item the second term of the modified error indicator is multiplied by the term $1 - \bar{\rho}$. From \cref{fig:batchchrom_rho} we can see that $\bar{\rho}$ tends to one as the iteration proceeds.
\end{itemize}
	\setlength\fheight{6cm}
	\setlength\fwidth{6cm}
\begin{figure}[t!]
	\centering	
%
%
\begin{tikzpicture}

\begin{axis}[%
width=\fwidth,
height=\fheight,
at={(0\fwidth,0\fheight)},
scale only axis,
xmin=0,
xmax=50,
ymode=log,
ymin=1e-07,
ymax=100,
yminorticks=true,
xlabel style={font=\color{white!15!black}},
xlabel={Iterations},
ylabel style={font=\color{white!15!black}},
ylabel={Maximal error},
axis background/.style={fill=white},
legend style={legend cell align=left, align=left, draw=white!15!black, font = \scriptsize}
]
\addplot [color=red, line width=1.0pt]
  table[row sep=crcr]{%
1	87.411428478925\\
2	9.5859757256063\\
3	6.69530290788166\\
4	5.27085419050464\\
5	33.024678480773\\
6	3.06945706802759\\
7	2.94141827599873\\
8	1.83475286364505\\
9	2.34197777697626\\
10	1.65959886233613\\
11	1.54850647123074\\
12	0.519809801410994\\
13	0.912206054660568\\
14	0.271874091124715\\
15	0.370221081599291\\
16	0.155650681681406\\
17	0.119906771067529\\
18	0.0895850116640843\\
19	0.0716792044969385\\
20	0.0537686767125179\\
21	0.0430601180885787\\
22	0.0304452268968945\\
23	0.0208408851407866\\
24	0.0157202451749827\\
25	0.0109309015218575\\
26	0.00815515060552503\\
27	0.00747120027008242\\
28	0.00610699801046521\\
29	0.00507856229281324\\
30	0.00417874816362928\\
31	0.00418809897744031\\
32	0.00213210226037804\\
33	0.00129116549004425\\
34	0.00110242678630164\\
35	0.00131625915470446\\
36	0.00123044607880877\\
37	0.0008139117774287\\
38	0.000581776968227853\\
39	0.000332419907639358\\
40	0.000389107628905937\\
41	0.000313668707412415\\
42	0.000130716394396128\\
43	0.000129022452845092\\
44	0.000151907744052459\\
45	0.000112370436555076\\
46	9.03731708687196e-05\\
47	3.74513370774222e-05\\
};
\addlegendentry{Estimated error (Non-adaptive)}

\addplot [color=red, line width=1.0pt, mark=asterisk, mark options={solid, red},mark repeat=5,mark phase=1]
  table[row sep=crcr]{%
1	0.0558648189483966\\
2	0.0690924671989514\\
3	0.0626298067674615\\
4	0.054258461773796\\
5	0.321323412924734\\
6	0.0300688337574755\\
7	0.0202551730098506\\
8	0.019105028260639\\
9	0.0140323543113437\\
10	0.0101626269426736\\
11	0.0081011918761833\\
12	0.00526174844728779\\
13	0.00503938218150131\\
14	0.00317274917024004\\
15	0.00348939446959\\
16	0.00211713106647054\\
17	0.00176061073244945\\
18	0.00134699221723103\\
19	0.00113157979236036\\
20	0.000815218186045026\\
21	0.000640768403952409\\
22	0.00051013074659473\\
23	0.000413211648852103\\
24	0.000304397208053587\\
25	0.000238094159147474\\
26	0.000257063565251324\\
27	0.00015721450410307\\
28	0.000113414044495436\\
29	9.84347477883903e-05\\
30	6.21846375569342e-05\\
31	4.72963822036443e-05\\
32	3.85359427923378e-05\\
33	3.77832742278706e-05\\
34	3.0698725148431e-05\\
35	1.44552389661545e-05\\
36	1.33505264565595e-05\\
37	1.65419679982889e-05\\
38	1.54003448533141e-05\\
39	8.88640042511611e-06\\
40	4.10005631145112e-06\\
41	3.43227588246279e-06\\
42	3.34143330554445e-06\\
43	2.80663155837909e-06\\
44	1.39793103071167e-06\\
45	1.09567483088763e-06\\
46	1.22854291965495e-06\\
47	6.41137036231179e-07\\
};
\addlegendentry{True error (Non-adaptive)}

\addplot [color=black, line width=1.0pt,mark=o, mark options={solid, black},mark repeat=5,mark phase=1]
  table[row sep=crcr]{%
1	36.4751084112045\\
2	2.39234317818396\\
3	1.08881744943141\\
4	0.472992961212231\\
5	0.180700050947384\\
6	0.0617150602934228\\
7	0.0350989008765347\\
8	0.0208668367697665\\
9	0.0120091137060169\\
10	0.00455827990054793\\
11	0.00580836812204089\\
12	0.00485114431395884\\
13	0.00365493588511888\\
14	0.00338701684685665\\
15	0.0022920098549094\\
16	0.00127456771410159\\
17	0.00115548195734827\\
18	0.00100617083236061\\
19	0.00105161427997767\\
20	0.000746500065866567\\
21	0.00053618695969019\\
22	0.000341818426497902\\
23	0.000315554245910868\\
24	0.000257899388693745\\
25	0.000152383397722482\\
26	9.79886691435597e-05\\
27	0.000128136974740462\\
28	0.000113930677553567\\
29	4.63441077213225e-05\\
};
\addlegendentry{Estimated error (Adaptive)}

\addplot [color=black, line width=1.0pt, mark=x, mark options={solid, black},mark repeat=5,mark phase=1]
  table[row sep=crcr]{%
1	0.0559249301523806\\
2	0.143761789100029\\
3	0.0605480000070894\\
4	0.00478166912681531\\
5	0.00202012145093904\\
6	0.00111873830706855\\
7	0.00066472117431702\\
8	0.000400242992442003\\
9	0.000197951747623708\\
10	0.000177982002915521\\
11	0.000109372848547047\\
12	9.09307187540545e-05\\
13	8.22205815856995e-05\\
14	5.51040949868249e-05\\
15	3.6590722304234e-05\\
16	3.93938262087215e-05\\
17	3.44148549079076e-05\\
18	3.09249215749041e-05\\
19	1.14481465709293e-05\\
20	1.43035481169895e-05\\
21	1.05827955971535e-05\\
22	1.59659458064439e-05\\
23	7.20591208130557e-06\\
24	4.48370946761534e-06\\
25	4.76003651562894e-06\\
26	3.37884928345059e-06\\
27	2.78743232208343e-06\\
28	2.18777296449301e-06\\
29	2.13374853600535e-06\\
};
\addlegendentry{True error (Adaptive)}

\addplot [color=blue, dashed, line width=1.0pt]
  table[row sep=crcr]{%
0	0.0001\\
1	0.0001\\
2	0.0001\\
3	0.0001\\
4	0.0001\\
5	0.0001\\
6	0.0001\\
7	0.0001\\
8	0.0001\\
9	0.0001\\
10	0.0001\\
11	0.0001\\
12	0.0001\\
13	0.0001\\
14	0.0001\\
15	0.0001\\
16	0.0001\\
17	0.0001\\
18	0.0001\\
19	0.0001\\
20	0.0001\\
21	0.0001\\
22	0.0001\\
23	0.0001\\
24	0.0001\\
25	0.0001\\
26	0.0001\\
27	0.0001\\
28	0.0001\\
29	0.0001\\
30	0.0001\\
31	0.0001\\
32	0.0001\\
33	0.0001\\
34	0.0001\\
35	0.0001\\
36	0.0001\\
37	0.0001\\
38	0.0001\\
39	0.0001\\
40	0.0001\\
41	0.0001\\
42	0.0001\\
43	0.0001\\
44	0.0001\\
45	0.0001\\
46	0.0001\\
47	0.0001\\
48	0.0001\\
49	0.0001\\
50	0.0001\\
};
\addlegendentry{Tolerance}

\end{axis}
\end{tikzpicture}%
	\caption{Batch chromatography: Convergence of the greedy algorithms: \cref{alg_podgreedy_ei,,alg_adap_podgreedy}. }
	\label{fig:batchchrom_convergence}
\end{figure}
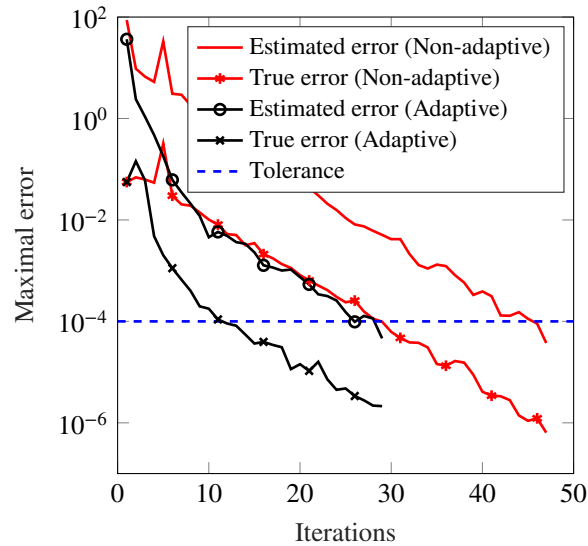
\setlength\fheight{6cm}
\setlength\fwidth{6cm}
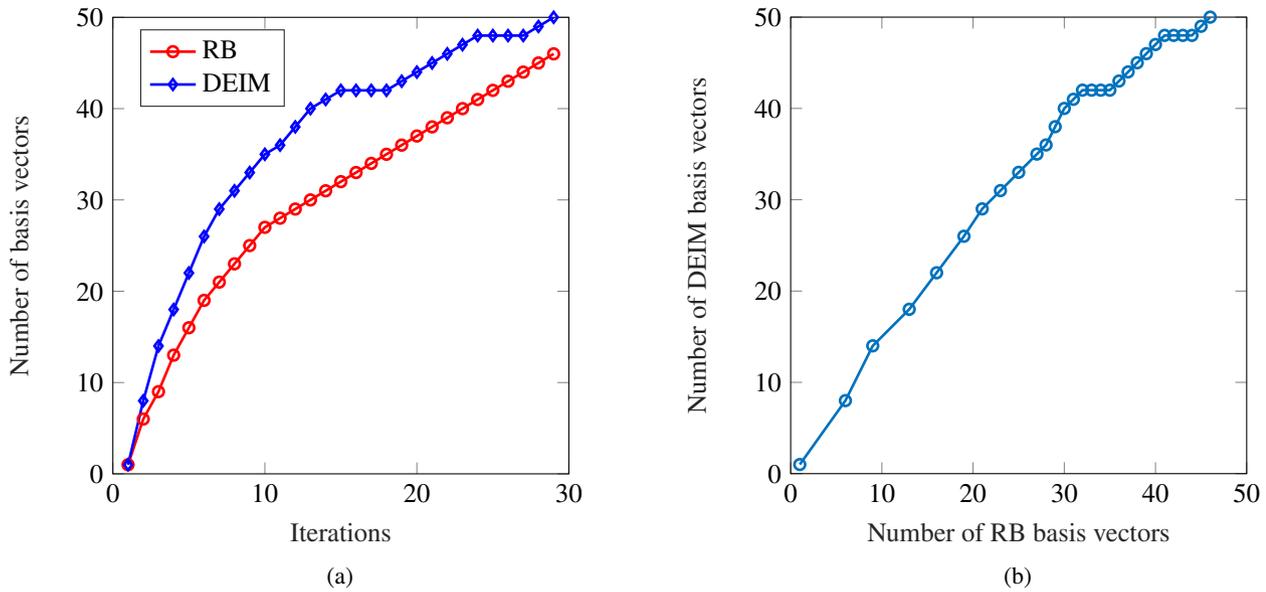
\begin{figure}[t!]
	\begin{subfigure}[b]{0.5\textwidth}		
%
%
\begin{tikzpicture}

\begin{axis}[%
width=\fwidth,
height=\fheight,
at={(0\fwidth,0\fheight)},
scale only axis,
xmin=0,
xmax=30,
xlabel style={font=\color{white!15!black}},
xlabel={Iterations},
ymin=0,
ymax=50,
ylabel style={font=\color{white!15!black}},
ylabel={Number of basis vectors},
axis background/.style={fill=white},
legend style={at={(0.06,0.804)}, anchor=south west, legend cell align=left, align=left, draw=white!15!black}
]
\addplot [color=red, mark=o, mark options={solid, red},line width=1.0pt]
  table[row sep=crcr]{%
1	1\\
2	6\\
3	9\\
4	13\\
5	16\\
6	19\\
7	21\\
8	23\\
9	25\\
10	27\\
11	28\\
12	29\\
13	30\\
14	31\\
15	32\\
16	33\\
17	34\\
18	35\\
19	36\\
20	37\\
21	38\\
22	39\\
23	40\\
24	41\\
25	42\\
26	43\\
27	44\\
28	45\\
29	46\\
};
\addlegendentry{RB}

\addplot [color=blue, mark=diamond, mark options={solid, blue},line width=1.0pt]
  table[row sep=crcr]{%
1	1\\
2	8\\
3	14\\
4	18\\
5	22\\
6	26\\
7	29\\
8	31\\
9	33\\
10	35\\
11	36\\
12	38\\
13	40\\
14	41\\
15	42\\
16	42\\
17	42\\
18	42\\
19	43\\
20	44\\
21	45\\
22	46\\
23	47\\
24	48\\
25	48\\
26	48\\
27	48\\
28	49\\
29	50\\
};
\addlegendentry{DEIM}

\end{axis}
\end{tikzpicture}%
		\caption{}
		\label{fig:batchchrom_basis_iteration}
	\end{subfigure}%
	\begin{subfigure}[b]{0.5\textwidth}	
%
%
\definecolor{mycolor1}{rgb}{0.00000,0.44700,0.74100}%
\begin{tikzpicture}

\begin{axis}[%
width=\fwidth,
height=\fheight,
at={(0\fwidth,0\fheight)},
scale only axis,
xmin=0,
xmax=50,
xlabel style={font=\color{white!15!black}},
xlabel={Number of RB basis vectors},
ymin=0,
ymax=50,
ylabel style={font=\color{white!15!black}},
ylabel={Number of DEIM basis vectors},
axis background/.style={fill=white}
]
\addplot [color=mycolor1, line width=1.0pt, mark=o, mark options={solid, mycolor1}, forget plot]
  table[row sep=crcr]{%
1	1\\
6	8\\
9	14\\
13	18\\
16	22\\
19	26\\
21	29\\
23	31\\
25	33\\
27	35\\
28	36\\
29	38\\
30	40\\
31	41\\
32	42\\
33	42\\
34	42\\
35	42\\
36	43\\
37	44\\
38	45\\
39	46\\
40	47\\
41	48\\
42	48\\
43	48\\
44	48\\
45	49\\
46	50\\
};
\end{axis}
\end{tikzpicture}%
		\caption{}
		\label{fig:batchchrom_basis}
	\end{subfigure}%
	\caption{Batch chromatography example. (a) RB, DEIM basis vs iteration number. (b) Adaptive increment of RB vs DEIM basis vectors.}
\end{figure}
	\setlength\fheight{6cm}
\setlength\fwidth{6cm}
\begin{figure}[t!]
	\centering
	\begin{subfigure}[b]{0.5\textwidth}
		\captionsetup{justification=centering}		
%
%
\begin{tikzpicture}

\begin{axis}[%
width=\fwidth,
height=\fheight,
at={(0\fwidth,0\fheight)},
scale only axis,
xmin=10,
xmax=30,
xlabel style={font=\color{white!15!black}},
xlabel={Iterations},
ymin=10,
ymax=120,
yminorticks=true,
ylabel style={font=\color{white!15!black}},
ylabel={Effectivity},
axis background/.style={fill=white},
legend style={legend cell align=left, align=left, draw=white!15!black, font = \scriptsize}
]
\addplot [color=blue, line width=1.0pt]
  table[row sep=crcr]{%
1	662.95133110984\\
2	18.5039209964369\\
3	19.6732659499281\\
4	102.921573823033\\
5	94.964673553759\\
6	61.3271188055916\\
7	57.4575414283611\\
8	59.6186652418763\\
9	69.8396643077187\\
10	30.7329462165295\\
11	60.1791587542626\\
12	59.9333585503026\\
13	54.9983301426585\\
14	76.3907588117508\\
15	82.903916874056\\
16	43.9120946596401\\
17	40.65871870267\\
18	38.6274345881148\\
19	107.750418532277\\
20	66.3186504441825\\
21	65.7952143156604\\
22	29.5371029035008\\
23	69.5633552458688\\
24	84.498611436885\\
25	50.5093199333684\\
26	47.2267555868434\\
27	67.0461007883404\\
28	72.1995937344703\\
29	36.1799693069412\\
};
\addlegendentry{Original error indicator}

\addplot [color=gray, mark=o, mark options={solid, gray}, line width=1.0pt]
  table[row sep=crcr]{%
1	652.215538076122\\
2	16.6410225774206\\
3	17.9827153548246\\
4	98.9179612114346\\
5	89.450092648334\\
6	55.1648762748956\\
7	52.8024414335795\\
8	52.1354206414751\\
9	60.666873872947\\
10	25.6109034951781\\
11	53.1061245930923\\
12	53.3498951776683\\
13	44.4528099245965\\
14	61.46579210976\\
15	62.6390983991095\\
16	32.3545041638887\\
17	33.5750930939642\\
18	32.5359218752921\\
19	91.8589112623767\\
20	52.1898524590473\\
21	50.6659090944184\\
22	21.4092187610924\\
23	43.7910208104698\\
24	57.5192015799606\\
25	32.0130732657515\\
26	29.0006037332007\\
27	45.9695375293229\\
28	52.0760972014155\\
29	21.7195732952134\\
};
\addlegendentry{Modified error indicator}

\end{axis}
\end{tikzpicture}%
		\caption{}
		\label{batchchrom:effectivity_orig_mod}
	\end{subfigure}%
	\begin{subfigure}[b]{0.5\textwidth}
		\captionsetup{justification=centering}		
%
%
\begin{tikzpicture}

\begin{axis}[%
width=\fwidth,
height=\fheight,
at={(0\fwidth,0\fheight)},
scale only axis,
xmin=10,
xmax=30,
xlabel style={font=\color{white!15!black}},
xlabel={Iterations},
ymin=0,
ymax=200,
ylabel style={font=\color{white!15!black}},
ylabel={Effectivity},
axis background/.style={fill=white},
legend style={legend cell align=left, align=left, draw=white!15!black, font = \scriptsize}
]
\addplot [color=red, line width=1.0pt]
  table[row sep=crcr]{%
1	652.215538076122\\
2	16.6410225774206\\
3	17.9827153548246\\
4	98.9179612114346\\
5	89.450092648334\\
6	55.1648762748956\\
7	52.8024414335795\\
8	52.1354206414751\\
9	60.666873872947\\
10	25.6109034951781\\
11	53.1061245930923\\
12	53.3498951776683\\
13	44.4528099245965\\
14	61.46579210976\\
15	62.6390983991095\\
16	32.3545041638887\\
17	33.5750930939642\\
18	32.5359218752921\\
19	91.8589112623767\\
20	52.1898524590473\\
21	50.6659090944184\\
22	21.4092187610924\\
23	43.7910208104698\\
24	57.5192015799606\\
25	32.0130732657515\\
26	29.0006037332007\\
27	45.9695375293229\\
28	52.0760972014155\\
29	21.7195732952134\\
};
\addlegendentry{GMRES}

\addplot [color=black, dashed, line width=1.0pt]
  table[row sep=crcr]{%
1	889.366330221234\\
2	25.9985512962407\\
3	8.44135357765642\\
4	116.243680597935\\
5	101.401988859615\\
6	76.653357084774\\
7	82.9244816421183\\
8	94.534981323532\\
9	77.9146064570617\\
10	46.1343091598055\\
11	112.721749868602\\
12	121.541043971549\\
13	127.644621075068\\
14	100.582534568214\\
15	81.1382955459658\\
16	136.843267924244\\
17	122.849526629856\\
18	126.608396038712\\
19	133.284366346205\\
20	103.770486285831\\
21	63.0874076624563\\
22	155.73501587315\\
23	137.922569512524\\
24	139.768352345566\\
25	172.670513604758\\
26	160.790448272321\\
27	94.3222651828708\\
28	96.0337280561118\\
29	75.8268755894993\\
};
\addlegendentry{Primal basis}

\end{axis}
\end{tikzpicture}%
		\caption{}
		\label{batchchrom:effectivity_dual}
	\end{subfigure}%
	\caption{Effectivity comparison inside \cref{alg_adap_podgreedy} for the batch chromatography example. (a) Effectivity: original vs modified, (b) Modified error indicator using the primal reduced basis vs \texttt{GMRES} for the dual system.}
	\label{batchchrom:effectivities}
\end{figure}
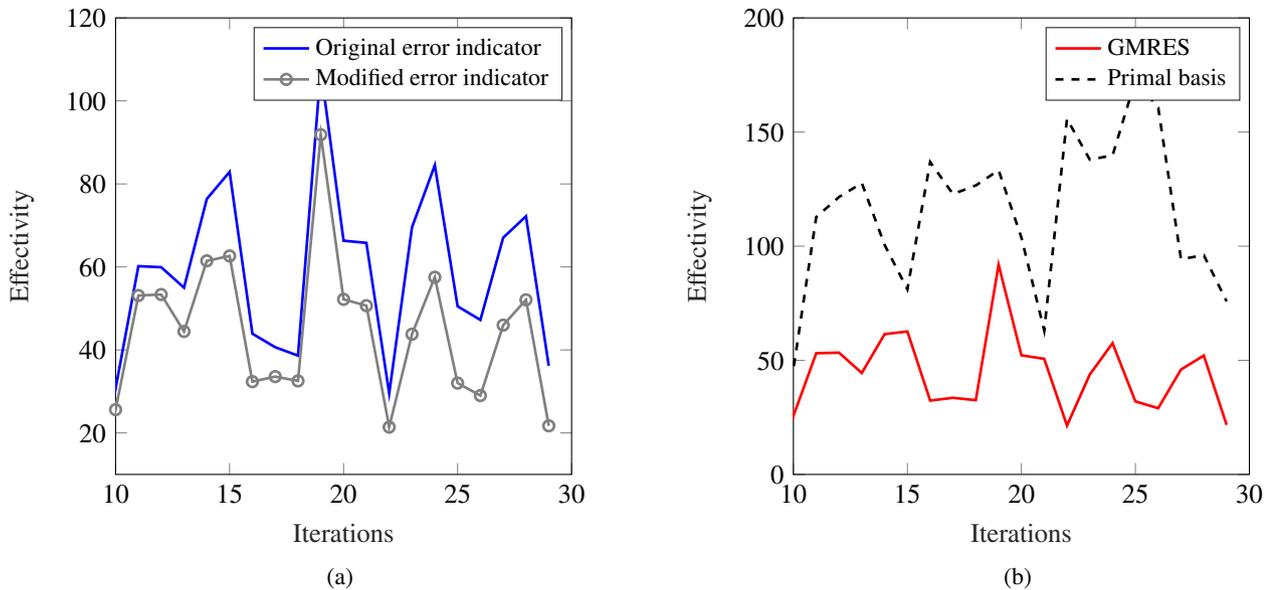
In \cref{batchchrom:effectivity_dual}, we compare the effectivities of the modified error indicator in two cases: in one case, we use the primal reduced basis to reduce the dual system and get the approximate dual solution $(\hat{x}_{\text{du}})$ from the reduced dual system;  while in the other case, we use the \texttt{GMRES} method to solve the dual system. It is clear that the case of using \texttt{GMRES} results in a smaller effectivity. This is due to the fact that the residual of the dual system is very small when using a Krylov space method to solve the dual system.
In \cref{fig:batchchrom_error_final}, we compare the effectivity of the original error indicator with the modified error indicator, for estimating the final ROM at the final iteration of \cref{alg_adap_podgreedy}. This shows that the modified error indicator is much sharper for the final ROM. Finally, it can be seen from \cref{tab:batchchrom_times} that the proposed adaptive approach is able to find a more compact ROM in a much shorter time. 
\begin{figure}[t!]
	\begin{subfigure}[b]{0.5\textwidth}
		\captionsetup{justification=centering}		
%
%
\definecolor{mycolor1}{rgb}{0.00000,0.44700,0.74100}%
\begin{tikzpicture}
\begin{axis}[%
width=\fwidth,
height=\fheight,
at={(0\fwidth,0\fheight)},
scale only axis,
xmin=0,
xmax=30,
xlabel style={font=\color{white!15!black}},
xlabel={Iterations},
ymin=0,
ymax=35,
legend style={legend cell align=left, align=left, draw=white!15!black, font = \scriptsize},
axis background/.style={fill=white}
]
\addplot [color=mycolor1, line width=1.0pt]
  table[row sep=crcr]{%
1	29.2923369742646\\
2	13.3978542745742\\
3	12.6916308111929\\
4	32.4272176707767\\
5	16.8211283522849\\
6	13.4922796953572\\
7	12.1636517775328\\
8	9.54382330041538\\
9	9.32153646058191\\
10	5.93210224151474\\
11	9.6069587205629\\
12	8.63631633832155\\
13	8.50020617214473\\
14	8.10293710595995\\
15	5.2929832841918\\
16	5.60786402229974\\
17	5.30616181876286\\
18	5.3754621121365\\
19	8.04496119739764\\
20	7.82093545705042\\
21	6.96404274527278\\
22	5.46461972177572\\
23	6.82303779199179\\
24	6.92751801965331\\
25	4.4831237296741\\
26	3.95884769953258\\
27	3.72690733157676\\
28	4.99277129564942\\
29	2.3330489568211\\
};
\addlegendentry{$\bar{\rho}$}
\end{axis}
\end{tikzpicture}%
		\caption{}
		\label{fig:batchchrom_rho}
	\end{subfigure}%
	\begin{subfigure}[b]{0.5\textwidth}
		\captionsetup{justification=centering}	
%
%
\definecolor{mycolor1}{rgb}{0.00000,0.44700,0.74100}%
\definecolor{mycolor2}{rgb}{0.85000,0.32500,0.09800}%
\begin{tikzpicture}

\begin{axis}[%
width=\fwidth,
height=\fheight,
at={(0\fwidth,0\fheight)},
scale only axis,
xmin=0,
xmax=60,
xlabel style={font=\color{white!15!black}},
xlabel={Parameter},
ymin=10,
ymax=90,
ylabel style={font=\color{white!15!black}},
ylabel={Effectivity},
axis background/.style={fill=white},
legend style={at={(0.158,0.804)}, anchor=south west, legend cell align=left, align=left, draw=white!15!black,font = \scriptsize}
]
\addplot [color=mycolor2, line width=1.0pt]
  table[row sep=crcr]{%
1	20.9363837309275\\
2	23.9464932507964\\
3	22.0628481799135\\
4	22.195599053237\\
5	21.368561671288\\
6	22.4625966110759\\
7	20.353196746115\\
8	20.3639035744512\\
9	21.1473092379247\\
10	23.0085407476591\\
11	33.1972137485092\\
12	34.2522630180633\\
13	31.7089094728328\\
14	34.7778994442168\\
15	33.8382148372176\\
16	34.4002391502908\\
17	32.0357862290259\\
18	33.4269084003224\\
19	32.6339262519964\\
20	45.1760796909263\\
21	39.1899352355121\\
22	47.4685370921966\\
23	45.0807968130554\\
24	47.3178349698567\\
25	46.9567172908322\\
26	41.2168848728545\\
27	40.1176006495788\\
28	43.5819702140687\\
29	37.13286123778\\
30	57.4724022107915\\
31	52.1424393774862\\
32	53.094669007895\\
33	51.5910377570467\\
34	54.2683040014406\\
35	55.5336761425455\\
36	49.6281183815951\\
37	46.6047677197882\\
38	47.1085085443382\\
39	44.9781942353045\\
40	63.9131628201719\\
41	57.8344787955336\\
42	62.4735428789073\\
43	59.1283220177293\\
44	65.923080903364\\
45	60.0920386853799\\
46	55.2387987718571\\
47	53.3650898205037\\
48	50.1780864698514\\
49	45.9005041334575\\
50	64.8563059012102\\
51	63.3792027763765\\
52	77.712994472774\\
53	64.0021340649617\\
54	78.6723360103178\\
55	56.8283553728559\\
56	57.3153579143028\\
57	54.3743259961959\\
58	54.8998336990903\\
59	52.6608607250303\\
60	82.3844549617804\\
};
\addlegendentry{Original error indicator}
\addplot [color=mycolor1, mark=o,   mark options={solid, mycolor1}, line width=1.0pt]
table[row sep=crcr]{%
	1	12.4660090772373\\
	2	15.1904170917927\\
	3	14.3470412991402\\
	4	15.2337376426312\\
	5	15.286088337456\\
	6	16.0167604848782\\
	7	14.4175328043209\\
	8	13.4262366590574\\
	9	15.0905695217059\\
	10	16.1562920286648\\
	11	22.645222622271\\
	12	22.4797974834681\\
	13	21.5131218473015\\
	14	23.7582532669856\\
	15	22.885177893084\\
	16	23.4169700075061\\
	17	21.4363040818648\\
	18	20.5278108602351\\
	19	21.3979194210706\\
	20	29.2622192656095\\
	21	22.1649416514049\\
	22	30.2404974320502\\
	23	28.6930999335704\\
	24	29.6234081382187\\
	25	29.3680014267219\\
	26	26.0099869747425\\
	27	24.481974539916\\
	28	27.9913554243511\\
	29	24.1202836081011\\
	30	37.4142317973242\\
	31	35.2203579562149\\
	32	33.3626536146913\\
	33	31.5574385563981\\
	34	34.6741582567486\\
	35	33.9188196066133\\
	36	30.5514296108629\\
	37	31.9778797118479\\
	38	31.0875922635319\\
	39	27.9675967070199\\
	40	41.2729461741262\\
	41	37.3430719355332\\
	42	32.5657042324798\\
	43	37.2646210780018\\
	44	38.3148722569403\\
	45	34.7203551397222\\
	46	34.4119362405988\\
	47	31.5888588234414\\
	48	32.3650021095165\\
	49	28.8557024857029\\
	50	39.3441741987754\\
	51	35.7657381586819\\
	52	39.5794228378191\\
	53	38.9136466347051\\
	54	47.5582646428039\\
	55	38.4851889460809\\
	56	37.3635570152271\\
	57	29.586706201684\\
	58	38.0202526183584\\
	59	31.2825314873655\\
	60	51.3809611182213\\
};
\addlegendentry{Modified error indicator}
\end{axis}
\end{tikzpicture}%
		\caption{}
		\label{fig:batchchrom_error_final}
	\end{subfigure}%
	\caption{Results of the adaptive process for batch chromatography. (a) The value of $\bar{\rho}(\mu^{*})$ at selected $\mu^{*}$ at each iteration of \cref{alg_adap_podgreedy}. (b) Effectivities of the two error indicators in \cref{error_split_phi} and \cref{error_split_psi}, respectively.}
\end{figure}
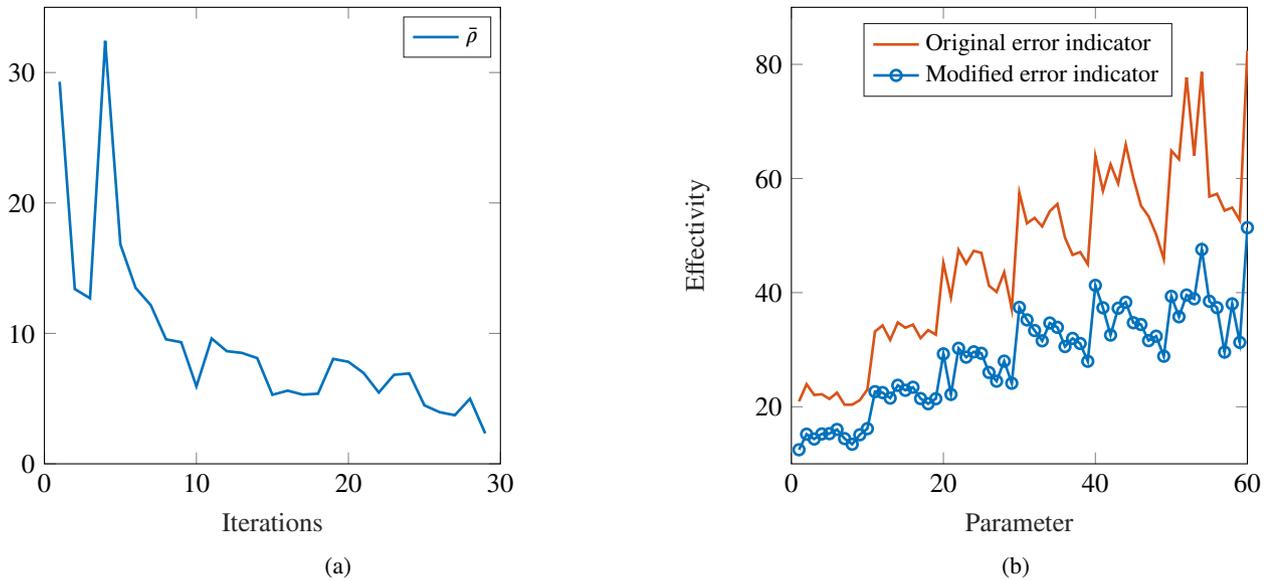
\begin{table}[t!]
	\caption{Runtime comparison for the batch chromatography example.}
	\vspace{-5mm}	
	\begin{center}	
		\begin{tabular}{|c|c|}
			\hline
			Method    & runtime (s) \\ \hline
			Adaptive & 7140         \\ \hline
			Non-adaptive       & 11260        \\ \hline
		\end{tabular}		
	\end{center}
	\label{tab:batchchrom_times}
\end{table}	 
\section{Conclusions}
In this work, we propose an adaptive scheme to generate the RB and (D)EIM basis. A good balance between the approximations of the state and the nonlinear term is achieved, while obtaining a ROM of desired tolerance. Whenever necessary, the proposed scheme adaptively adds new basis vectors to, or removes redundant basis vectors from the existing RB, (D)EIM projection matrices, in order to obtain a compact ROM. Expensive FOM simulations at all samples of the training set is avoided. The adaptive scheme is driven by a suitable \textit{a posteriori} error indicator, which makes use of an appropriate dual system solver. We have tested the adaptive approach on several examples from applications. The method is shown to work successfully for both non-parametric and parametric systems. The Adaptive POD-Greedy-(D)EIM scheme is able to deliver a ROM with much fewer iterations when compared to the standard approach. Also, we demonstrated that the proposed modified error indicator offers a better effectivity when compared to a similar indicator. The current error indicator is based on a semi-implicit time discretization. Future work would be to derive an error indicator for implicit time integration schemes involving nonlinear solvers, such as the Newton method.

\bibliography{refs}%

\end{document}